\title{Birational description of moduli spaces of rank two logarithmic connections}
\author{Takafumi Matsumoto\footnote{Department of Mathematics, Graduate School of Science, Kobe University, Kobe, Rokko 657-8501, Japan, E-mail: tmatumt@math.kobe-u.ac.jp}}
\date{}
\theoremstyle{definition}
\newtheorem{definition}{Definition}[section]
\newtheorem{theorem}{Theorem}[section]
\newtheorem{proposition}[theorem]{Proposition}
\newtheorem{lemma}[theorem]{Lemma}
\newtheorem{corollary}[theorem]{Corollary}
\newtheorem{remark}[theorem]{Remark}
\newcommand{\res}{\textrm{res}}
\newcommand{\End}{\textrm{End}}
\newcommand{\rank}{\textrm{rank}\,}
\newcommand{\Bun}{\textrm{Bun}}
\newcommand{\App}{\textrm{App}}
\newcommand{\Ker}{\textrm{Ker}\,}
\newcommand{\tr}{\textrm{tr}}
\newcommand{\id}{\textrm{id}}
\newcommand{\Hom}{\textrm{Hom}}
\newcommand{\pardeg}{\textrm{par\,deg}}
\newcommand{\elm}{\textrm{elm}}
\newcommand{\smhom}{\mathcal{S}\mathcal{H}om}
\newcommand{\mhom}{\mathcal{H}om}
\begin{document}
	
	\maketitle
	\begin{abstract}
		In this paper, we provide an explicit description of the Zariski-open subset of the moduli space of rank 2 parabolic logarithmic connections in the case $g\geq 2$. Our approach is to analyze the underlying parabolic bundles and the apparent singularities of the parabolic connections. We prove that a Zariski-open subset of the product of a projective space and the moduli space of parabolic bundles gives a Darboux coordinate for the moduli space of parabolic connections.
	\end{abstract}
	
	\section{Introduction}
	Let $C$ be an irreducible smooth projective curve over the field of complex numbers $\mathbb{C}$ of genus $g$, and let $\mathbf{t}=\{t_1,\ldots,t_n\}$ be a set of $n$ distinct points on $C$. In this paper, we study the geometry of the moduli space $M^{\boldsymbol{\alpha}}(\boldsymbol{\nu}, (L,\nabla_L))$ of $\boldsymbol{\alpha}$-stable $\boldsymbol{\nu}$-parabolic logarithmic connections over $(C,\mathbf{t})$ with fixed determinant $(L,\nabla_L)$. The moduli space of parabolic connections has the canonical symplectic structure, and providing a Darboux coordinate of such a moduli space is important from the viewpoint of the isomonodromic deformation. There are two main approaches to giving a Darboux coordinate. One is to use the apparent singularities and their dual parameters. Okamoto \cite{Ok} described Hamiltonian systems of the Garnier systems, which are obtained from the isomonodromic deformation of rank 2 connections on $\mathbb{P}^1$, by using the apparent singularities and their dual parameters. Iwasaki \cite{Iw} proved that the moduli space of $SL$-connections on a closed Riemann surface of any genus can be locally written by the apparent singularities and their dual parameters as an analytic space and provided Hamiltonian systems of the equations obtained from the isomonodromic deformation in the case of higher genus, which is a generalization of Okamoto's result. Arinkin-Lysenko \cite{AL}, Oblezin \cite{Ob}, Inaba-Iwasaki-Saito \cite{IIS2} and Komyo-Saito \cite{KS} give an explicit description of the moduli space of parabolic connections on $\mathbb{P}^1$ as an algebraic variety. The other approach is to analyze the apparent singularities and underlying parabolic bundles. Loray-Saito \cite{LS} provided an explicit description of the moduli space in the case of $g=0$ in this way. Specifically, they proved that a Zariski-open subset of the moduli space of parabolic connections on $\mathbb{P}^1$ is isomorphic to a Zariski-open subset of the product of a projective space and the moduli space of parabolic bundles. Fassarella-Loray \cite{FL} and Fassarella-Loray-Muniz \cite{FLM} investigated the geometry of the moduli space in the case of $g=1$. In this paper, we describe the Zariski-open subset of the moduli space $M^{\boldsymbol{\alpha}}(\boldsymbol{\nu}, (L,\nabla_L))$ for a certain parabolic weight $\boldsymbol{\alpha}$  in the case $g\geq 2$ by using the apparent singularities and underlying parabolic bundles, which is a generalization of Loray-Saito's result. 
	
	In order to state the description of the Zariski-open subset of the moduli space precisely, we introduce some notations.  Let $\boldsymbol{\nu}=(\nu^\pm_i)_{1 \leq i \leq n}$ be a collection of complex numbers satisfying $\sum_{i=1}^n(\nu^+_i+\nu^-_i)=-d$. Let $\boldsymbol{\alpha}=\{\alpha^{(i)}_1, \alpha^{(i)}_2\}_{1 \leq i \leq n}$ be a collection of rational numbers such that for all $i=1, \ldots, n$, $0 < \alpha^{(i)}_1 < \alpha^{(i)}_2 <1$. Let $(L,\nabla_L)$ be a pair of a line bundle on $C$ with $\deg L=d$ and a logarithmic connection $\nabla_L$ over $L$ which has the residue data $\res_{t_i}(\nabla_L)=\nu^+_i+\nu^-_i$ for each $i$. 
	Let $M^{\boldsymbol{\alpha}}(\boldsymbol{\nu},(L,\nabla_L))$ be the moduli space of rank 2 $\boldsymbol{\alpha}$-stable $\boldsymbol{\nu}$-parabolic connections over $(C,\mathbf{t})$ whose determinant and trace connection are isomorphic to $(L,\nabla_L)$. Inaba~\cite{In} showed that $M^{\boldsymbol{\alpha}}(\boldsymbol{\nu},(L,\nabla_L))$ is a smooth irreducible variety if
	\begin{equation}\label{assume}
		g=1, n\geq 2\ \text{or}\ g \geq 2, n\geq 1.
	\end{equation}
	By elementary transformations, we can change degree $d$ freely. When $d=2g-1$, by the theory of apparent singularities~\cite{SS}, we can define the rational map
	\[
	\App: M^{\boldsymbol{\alpha}}(\boldsymbol{\nu},(L,\nabla_L)) \cdots \rightarrow \mathbb{P}H^0(C,L \otimes \Omega_C^1(D)).
	\]
	The map which forgets connections induces a rational map
	\[
	\Bun: M^{\boldsymbol{\alpha}}(\boldsymbol{\nu},(L,\nabla_L)) \cdots \rightarrow P^{\boldsymbol{\alpha}}(2,L).
	\]
	Let $V_0$ and $M^{\boldsymbol{\alpha}}(\boldsymbol{\nu},(L,\nabla_L))^0$ be the open subsets of $P^{\boldsymbol{\alpha}}(2,L)$ and $M^{\boldsymbol{\alpha}}(\boldsymbol{\nu},(L,\nabla_L))$, respectively, defined in section 5. From Proposition 5.5, we obtain an open immersion $V_0 \hookrightarrow \mathbb{P}H^1(C,L^{-1}(-D))$. Let $\Sigma \subset \mathbb{P}H^0(C,L\otimes \Omega_C^1(D)) \times \mathbb{P}H^1(C,L^{-1}(-D))$ be the incidence variety. Then the following theorem holds.
	\begin{theorem}(Theorem 5.6 and Proposition 5.11)
		Under the condition (1), assume that $d=2g-1$, $\sum_{i=1}^{n}\nu^-_i \neq 0$ and $\sum_{i=1}^n(\alpha^{(i)}_2-\alpha^{(i)}_1)<1$. Then the map 
		\[
		\App\times\Bun \colon  M^{\boldsymbol{\alpha}}(\boldsymbol{\nu},(L,\nabla_L))^0 \longrightarrow (\mathbb{P}H^0(C,L \otimes \Omega_C^1(D)) \times V_0)\,\setminus\,\Sigma
		\]
		is an isomorphism.  Hence, the rational map 
		\[
		\App\times\Bun  \colon M^{\boldsymbol{\alpha}}(\boldsymbol{\nu},(L,\nabla_L))
		\ \cdots \rightarrow |L \otimes \Omega_C^1(D)| \times P^{\boldsymbol{\alpha}}(2,L)
		\]
		is birational. Moreover, $\App$ and $\Bun$ are Lagrangian fibrations.
	\end{theorem}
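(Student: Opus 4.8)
The plan is to analyze $\App\times\Bun$ fiberwise over $V_0$. For a fixed parabolic bundle $(E,\{l_i\})\in V_0$ I want to show that $\App$ restricted to the set of compatible connections on $(E,\{l_i\})$ (same trace $\nabla_L$, residual eigenvalues $\boldsymbol{\nu}$) is an open immersion into $\mathbb{P}H^0(C,L\otimes\Omega_C^1(D))$ whose image is exactly the complement of a hyperplane depending on $(E,\{l_i\})$, and that $\Sigma$ is precisely the resulting hyperplane subbundle over $V_0$. First I would recall from section~4 the description of $V_0$ (the locus where the underlying bundle, resp.\ its elementary transform $\elm$ along $\{l_i\}$, carries a $1$-dimensional space of sections $\mathbb{C}s$) together with the identification $V_0\hookrightarrow\mathbb{P}H^1(C,L^{-1}(-D))$ by extension class, and the facts that on $M^{\boldsymbol{\alpha}}(\boldsymbol{\nu},(L,\nabla_L))^0$ both $\App$ and $\Bun$ are morphisms with $\Bun$ landing in $V_0$.

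For $(E,\{l_i\})\in V_0$, the fibre of $\Bun$ is a torsor under the space $W$ of strongly parabolic (trace zero, residue-nilpotent along $\{l_i\}$) Higgs fields $\Phi\in H^0(C,\mathrm{SPar}\,\End(E)\otimes\Omega_C^1(D))$, and $\dim W=\dim P^{\boldsymbol{\alpha}}(2,L)=3g-3+n$. Writing $\App$ through the canonical section as $(E,\nabla,\{l_i\})\mapsto[s\wedge\nabla(s)]$ and fixing a base connection $\nabla_0$ adapted to the extension, one gets $\App(\nabla_0+\Phi)=[a_0+\ell(\Phi)]$ with $a_0=s\wedge\nabla_0(s)$ and $\ell\colon W\to H^0(C,L\otimes\Omega_C^1(D))$, $\ell(\Phi)=s\wedge\Phi(s)$, linear. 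Since $d=2g-1$ forces $\dim H^0(C,L\otimes\Omega_C^1(D))=3g-2+n=\dim W+1$, it suffices to prove: (a) $\ell$ is injective, so $\Image\ell$ is a hyperplane; and (b) $a_0\notin\Image\ell$; then $W\xrightarrow{\App}\mathbb{P}H^0(C,L\otimes\Omega_C^1(D))$ is exactly the standard affine chart complementary to $\mathbb{P}(\Image\ell)$, and one reads off that $\Sigma$ is $\bigcup_{V_0}\mathbb{P}(\Image\ell)$, a proper subvariety. Now (a) says a nonzero strongly parabolic $\Phi$ cannot make $\mathbb{C}s$ a $\Phi$-invariant subbundle, and (b) says $s$ is not horizontal for any compatible connection, equivalently no compatible connection on $(E,\{l_i\})$ is reducible along $\mathbb{C}s$; in both cases the induced residual exponents along $\mathbb{C}s$ are forced by the structure of $E\in V_0$ to be the $\nu_i^-$, so $\sum_{i=1}^n\nu_i^-\neq 0$ (together with a cohomology computation in the extension model) rules them out. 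I expect (a)–(b) — this linear-algebra/cohomology analysis in the extension model, where all three hypotheses $d=2g-1$, $\sum_i\nu_i^-\neq0$, $\sum_i(\alpha^{(i)}_2-\alpha^{(i)}_1)<1$ enter — to be the main obstacle.

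Next I would verify $\boldsymbol{\alpha}$-stability of the reconstructed connection: for $(E,\{l_i\})\in V_0$ and $\delta$ with $(\delta,(E,\{l_i\}))\notin\Sigma$, steps (a)–(b) yield a unique compatible $\nabla$ with $\App=\delta$, and one must check $(E,\nabla,\{l_i\})$ has no $\nabla$-invariant parabolic line subbundle of parabolic slope $\geq\tfrac12\pardeg E$. Here $\sum_i(\alpha^{(i)}_2-\alpha^{(i)}_1)<1$ restricts the degree of a putative destabilizing subbundle to finitely many values, $\nabla$-invariance forces that degree to be minus a sum of chosen residues, and the extension structure of $E\in V_0$ together with $\sum_i\nu_i^-\neq 0$ excludes these. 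Running the construction in families over $(\mathbb{P}H^0(C,L\otimes\Omega_C^1(D))\times V_0)\setminus\Sigma$ — the ranks involved being locally constant there — produces a morphism inverse to $\App\times\Bun$; alternatively, $\App\times\Bun$ is a bijective morphism from the irreducible variety $M^{\boldsymbol{\alpha}}(\boldsymbol{\nu},(L,\nabla_L))^0$ (irreducible by Inaba's theorem under~(1)) to the smooth, hence normal, target, so Zariski's main theorem gives that it is an isomorphism.

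Finally, for birationality of the rational map, $M^{\boldsymbol{\alpha}}(\boldsymbol{\nu},(L,\nabla_L))^0\neq\varnothing$ because $V_0\neq\varnothing$ (the generic parabolic bundle satisfies the open conditions defining it) and $\Sigma$ is proper, so $M^{\boldsymbol{\alpha}}(\boldsymbol{\nu},(L,\nabla_L))^0$ is a dense open of the irreducible $M^{\boldsymbol{\alpha}}(\boldsymbol{\nu},(L,\nabla_L))$; likewise $(\mathbb{P}H^0(C,L\otimes\Omega_C^1(D))\times V_0)\setminus\Sigma$ is a dense open of $|L\otimes\Omega_C^1(D)|\times P^{\boldsymbol{\alpha}}(2,L)$, since $V_0$ is open and nonempty in the irreducible $P^{\boldsymbol{\alpha}}(2,L)$ and $\Sigma$ is a proper closed subset. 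An isomorphism between dense opens is a birational map, which is the claim.
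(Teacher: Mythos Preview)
Your approach is correct and is essentially a refinement of the paper's \emph{second} proof (section~4.4), which also works fiberwise over $V_0$, uses the affine torsor structure of $\Bun^{-1}((E,l_*))$ under $W=H^0(C,\mathcal{E}^1)$, and proves injectivity of the linear map $\ell=\varphi\colon W\to H^0(C,L\otimes\Omega_C^1(D))$ exactly as you outline (Lemma~4.9 plus the trace-zero condition). The paper's second proof stops at injectivity on fibers and only concludes birationality; you push further with the dimension count $\dim W=3g-3+n=\dim H^0(C,L\otimes\Omega_C^1(D))-1$ to identify the image as the complement of a hyperplane, which is the right idea and gives the full isomorphism onto $(\mathbb{P}H^0\times V_0)\setminus\Sigma$.

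The paper's \emph{main} proof (Theorem~4.5) is quite different in execution: it fixes $\gamma\in H^0(C,L\otimes\Omega_C^1(D))$ and $b'\in H^1(C,L^{-1}(-D))$, writes an unknown $\lambda$-connection in \v{C}ech form as $\lambda d+A_i$ with $A_i=\begin{pmatrix}\alpha_i&\beta_i\\\gamma_i&\delta_i\end{pmatrix}$, and solves the cocycle equations entry by entry, showing there is a unique pair $(\lambda,\nabla)$ with $\varphi_\nabla=\gamma$ and $\tr\nabla=\lambda\nabla_L$; the key computation is that $\lambda$ is the unique scalar making $(b'_{ij}\gamma_j)$ cohomologous to $\lambda(\alpha_i^0-\alpha_j^0)$ in $H^1(C,\Omega_C^1)$, and $\lambda=0$ iff $\langle\gamma,b'\rangle=0$. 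This buys an explicit identification of $\Sigma$ with the incidence variety for the Serre pairing, which your argument does not directly produce (you only get ``$\Sigma=\bigcup_{V_0}\mathbb{P}(\Image\ell)$ is a hyperplane subbundle, hence a proper closed subvariety''); that suffices for the theorem as stated in the introduction, but matching it to the cup-product description in Theorem~4.5 would need one more line. Conversely, your approach is cleaner conceptually and avoids the four-step cocycle chase.

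One remark: your stability verification is more elaborate than necessary. Since $(E,l_*)\in V_0$ forces $E$ to be stable (Lemma~4.2), and under $\sum_i w_i<1$ bundle-stability is equivalent to parabolic $\boldsymbol{\alpha}$-stability (Lemma~4.1), any connection on such $(E,l_*)$ is automatically $\boldsymbol{\alpha}$-stable as a parabolic connection; no residue or degree bookkeeping is needed there.
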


	We note that for generic $\boldsymbol{\nu}$, the moduli space $M^{\boldsymbol{\alpha}}(\boldsymbol{\nu},(L,\nabla_L))$ is independent of the choice of a parabolic weight $\boldsymbol{\alpha}$. So for generic $\boldsymbol{\nu}$, we can ignore the conditions in the above theorem for a parabolic weight. For other $\boldsymbol{\nu}$, isomorphism classes of the moduli space as varieties depend on the choice of $\boldsymbol{\alpha}$.  In section 4, we investigate the variations of moduli spaces of rank 2 parabolic connections when parabolic weights are changed. 
	
	The rationality problem is a fundamental problem in algebraic geometry. For the moduli space of vector bundles over a smooth projective curve, Tjurin \cite{Tj}, Newstead \cite{Ne1}\cite{Ne2}, and Narasimhan and Ramanan \cite{NR} showed the rationality for special cases. Boden and Yokogawa \cite{BY} showed that moduli space $P^{\boldsymbol{\alpha}}(r, L)$ of $\boldsymbol{\alpha}$-stable parabolic bundles of rank $r$ with the determinant $L$ is rational in the case of certain flag structures including full flag structures. King and Schofield \cite{KS} proved that moduli space $M(r, L)$ of vector bundles of rank $r$ with the determinant $L$ is rational when $r$ and $\deg L$ are coprime, by using the result of Boden and Yokogawa. Hoffmann~\cite{Ho} showed the rationality of the moduli space of parabolic bundles in more cases. Theorem 1.1 implies the rationality of moduli spaces of rank 2 parabolic logarithmic connections with fixed determinant in the case (\ref{assume}). On the other hand, we can see that the rationality of moduli spaces of parabolic logarithmic connections with fixed determinant for any rank  follows from the rationality of $P^{\boldsymbol{\alpha}}(r, L)$ without using $\App\times \Bun$ (see Remark \ref{rationality}). 
	
	The content of this paper is as follows. Section 2 and section 3, respectively, contain a summary of parabolic bundles and parabolic connections. 
	
	In section 4, we study the variations of moduli spaces of rank 2 parabolic connections when parabolic weights cross walls. For parabolic weights $\boldsymbol{\alpha}$ and $\boldsymbol{\beta}$ which are separated by a single wall, we show that the codimension of the locus of $\boldsymbol{\beta}$-unstable parabolic connections on $M^{\boldsymbol{\alpha}}(\boldsymbol{\nu},(L,\nabla_L))$ is equal to $2g+n-3$, which is positive under the assumption (\ref{assume}), by considering the Harder--Narasimhan filtration for parabolic connections.
	
	In section 5, we study the Zariski-open subset of moduli spaces of rank 2 parabolic connections for certain parabolic weights. Firstly, we provide the distinguished open subset $V_0$ of the moduli space of parabolic bundles. Secondly, we introduce the apparent map. The apparent map was defined in general genus and rank by Saito and Szab\'o~\cite{SS}. Thirdly, we prove the first assertion of Theorem 1.1. This proof is based on the proof of Theorem 4.3 in \cite{LS}. We also give another proof that $\App \times \Bun$ is birational.
	Finally, we show that $\App$ and $\Bun$ are Lagrangian fibrations.

	\section{Parabolic bundles}
	
	In this section, we recall basic definitions and known facts on rank 2 parabolic bundles and their moduli spaces. In the case of general rank and parabolic structures, see \cite{MS}.
	\subsection{Parabolic bundles and $\boldsymbol{\alpha}$-stability}
	Let $C$ be a smooth projective curve of genus $g$ and $\mathbf{t}=\{t_1, \ldots, t_n\}$ be a set of $n$ distinct points on $C$. For any algebraic vector bundle $E$ on $C$, we set $E|_{t_i}=E \otimes (\mathcal{O}_C/\mathcal{O}_C(-t_i))$.
	\begin{definition}
		A quasi-parabolic bundle of rank 2 over $(C,\mathbf{t})$ is a pair $(E,l_*=\{l^{(i)}_*\}_{1 \leq i \leq n})$ consisting of the following data:
		\begin{itemize}
			\setlength{\itemsep}{0cm}
			\item[(1)]  $E$ is a rank 2 algebraic vector bundle on $C$.
			\item[(2)] $l^{(i)}_*$ is a filtration $E|_{t_i} = l^{(i)}_0 \supsetneq  l^{(i)}_{1} \supsetneq l^{(i)}_2 =\{0\}$.
		\end{itemize}
	\end{definition}
	The set of filtrations  $ l_*=\{l^{(i)}_*\}_{1\leq i \leq n}$ is said to be a parabolic structure on the vector bundle $E$.
	\\
	
	A weight $\boldsymbol{\alpha}=\{\alpha^{(i)}_1, \alpha^{(i)}_2\}_{1 \leq i \leq n}$ is a collection of rational numbers such that for all $i=1, \ldots, n$, $0 < \alpha^{(i)}_1 < \alpha^{(i)}_2 <1$.
	\begin{definition}
		Let $(E,l_*)$  be a quasi-parabolic bundle and $F$ be a subbundle of $E$. The parabolic degree of $F$ associated with $\boldsymbol{\alpha}$ is defined by
		\[
		\pardeg_{\boldsymbol{\alpha}}F := \deg F + \sum_{i=1}^{n}\sum_{j=1}^{2} \alpha^{(i)}_j \dim ((F|_{t_i} \cap l^{(i)}_{j-1})/(F|_{t_i} \cap l^{(i)}_{j})).
		\]
	\end{definition}
	\begin{definition}
		A quasi-parabolic bundle $(E,l_*)$ is $\boldsymbol{\alpha}$-semistable (resp. $\boldsymbol{\alpha}$-stable) if for any sub line bundle $F \subsetneq E$, the inequality
		\[
		\pardeg_{\boldsymbol{\alpha}}F \underset{(\text{resp}. \ <)}{\leq}
		\frac{\pardeg_{\boldsymbol{\alpha}}E}{2}
		\]
		holds.
	\end{definition}
	\begin{lemma}
		A quasi-parabolic bundle $(E,l_*)$ is $\boldsymbol{\alpha}$-semistable (resp. $\boldsymbol{\alpha}$-stable) if and only if the inequality 
		\[
		\deg E -2 \deg F + \sum_{F|_{t_i}\neq l^{(i)}_1}(\alpha^{(i)}_2-\alpha^{(i)}_1)- \sum_{F|_{t_i}= l^{(i)}_1}(\alpha^{(i)}_2-\alpha^{(i)}_1) 
		\underset{(\text{resp}. \ >)}{\geq}0
		\]
		holds.
	\end{lemma}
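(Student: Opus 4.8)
The plan is a direct computation: expand both sides of the defining inequality in Definition 2.3 and read off the result. First I would reduce to the case where $F$ is a rank $1$ subbundle. If $F \subsetneq E$ is an arbitrary sub line bundle, its saturation $\bar F$ is a rank $1$ subbundle with $\deg \bar F \geq \deg F$ and the same parabolic weights, so $\pardeg_{\boldsymbol{\alpha}} \bar F \geq \pardeg_{\boldsymbol{\alpha}} F$; hence it suffices to test the $\boldsymbol{\alpha}$-(semi)stability inequality on saturated sub line bundles, and for such an $F$ the fibre $F|_{t_i}$ is a one-dimensional subspace of $E|_{t_i} = l^{(i)}_0$ at every $t_i$.

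Next I would evaluate, point by point, the parabolic correction term $\sum_{j=1}^{2} \alpha^{(i)}_j \dim\bigl((F|_{t_i} \cap l^{(i)}_{j-1})/(F|_{t_i} \cap l^{(i)}_{j})\bigr)$ by the obvious case split. If $F|_{t_i} = l^{(i)}_1$, then $F|_{t_i}\cap l^{(i)}_0 = F|_{t_i}\cap l^{(i)}_1 = l^{(i)}_1$ and $F|_{t_i}\cap l^{(i)}_2 = 0$, so the term equals $\alpha^{(i)}_2$; if $F|_{t_i} \neq l^{(i)}_1$, the two distinct lines meet only in $0$, so $F|_{t_i}\cap l^{(i)}_1 = 0$ and the term equals $\alpha^{(i)}_1$. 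This gives $\pardeg_{\boldsymbol{\alpha}} F = \deg F + \sum_{F|_{t_i}=l^{(i)}_1}\alpha^{(i)}_2 + \sum_{F|_{t_i}\neq l^{(i)}_1}\alpha^{(i)}_1$. Applying the same computation to $E$, where $E|_{t_i} = l^{(i)}_0$ makes both graded pieces one-dimensional, yields $\pardeg_{\boldsymbol{\alpha}} E = \deg E + \sum_{i=1}^{n}(\alpha^{(i)}_1 + \alpha^{(i)}_2)$.

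Finally I would substitute these two expressions into $2\,\pardeg_{\boldsymbol{\alpha}} F \leq \pardeg_{\boldsymbol{\alpha}} E$ (resp. $<$), split the sum $\sum_{i}(\alpha^{(i)}_1 + \alpha^{(i)}_2)$ according to whether $F|_{t_i} = l^{(i)}_1$ or not, and collect terms: at each point with $F|_{t_i} = l^{(i)}_1$ the coefficient becomes $\alpha^{(i)}_1 - \alpha^{(i)}_2 = -(\alpha^{(i)}_2 - \alpha^{(i)}_1)$, and at each remaining point it becomes $\alpha^{(i)}_2 - \alpha^{(i)}_1$, which is exactly the displayed inequality. Since every step is an equivalence, reversing the manipulation proves the converse, in both the strict and the non-strict version. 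There is no genuine obstacle here; the only point needing a little care is the bookkeeping in the case analysis for $F|_{t_i}$ versus $l^{(i)}_1$ and the initial remark that passing to the saturation does not weaken the condition, which I would state first.
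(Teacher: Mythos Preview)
Your argument is correct and follows essentially the same route as the paper: both proofs compute, for a rank~$1$ sub line bundle $F$, the contribution at each $t_i$ by the case split $F|_{t_i}=l^{(i)}_1$ versus $F|_{t_i}\neq l^{(i)}_1$, obtaining the weights $\alpha^{(i)}_2$ and $\alpha^{(i)}_1$ respectively, and then rearrange. Your version is slightly more explicit in writing out $\pardeg_{\boldsymbol{\alpha}}E$ and the algebraic simplification, and you add a preliminary saturation remark; the paper omits this step since by ``sub line bundle'' it already means a saturated rank~$1$ subsheaf, so your extra reduction is harmless but not needed in this setting.
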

	\begin{proof}
		If $F|_{t_i}\neq l^{(i)}_1$, then we have
		\[
		\dim ((F|_{t_i} \cap l^{(i)}_{j-1})/(F|_{t_i} \cap l^{(i)}_{j}))=\left\{
		\begin{array}{ll}
			1\quad j=1 \\
			0 \quad j=2
		\end{array}	
		\right.
		\]
		by definition. If $F|_{t_i} = l^{(i)}_1$, then we also have
		\[
		\dim ((F|_{t_i} \cap l^{(i)}_{j-1})/(F|_{t_i} \cap l^{(i)}_{j}))=\left\{
		\begin{array}{ll}
			0 \quad j=1 \\
			1 \quad j=2
		\end{array}	
		\right..
		\]
		Therefore, we conclude the equivalence.
	\end{proof}
	\subsection{The moduli space of rank 2 quasi-parabolic bundles}
	Let us fix $(C,\mathbf{t})$. Let $P^{\boldsymbol{\alpha}}_{(C,\mathbf{t})}(d)$ denote the moduli space of  rank 2 $\boldsymbol{\alpha}$-semistable quasi-parabolic bundles over $(C,\mathbf{t})$ of degree $d$.
	\begin{theorem}(Mehta and Seshadri [Theorem 4.1~\cite{MS}]).
		The moduli space $P^{\boldsymbol{\alpha}}_{(C,\mathbf{t})}(d)$ is an irreducible  normal projective variety of dimension $4g+n-3$. Moreover, if $(E,l_*)$ is $\boldsymbol{\alpha}$-stable, then $P^{\boldsymbol{\alpha}}_{(C,\mathbf{t})}(d)$ is smooth at the point corresponding to $(E,l_*)$.
	\end{theorem}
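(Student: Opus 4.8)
The plan is to realize $P^{\boldsymbol{\alpha}}_{(C,\mathbf{t})}(d)$ as a GIT quotient of a smooth parameter space and then to read off the asserted properties from that construction; this is the rank-$2$, full-flag case of the general construction of \cite{MS}. Since $\boldsymbol{\alpha}$ is rational and the rank, the degree $d$ and the points $\mathbf{t}$ are fixed, the family of $\boldsymbol{\alpha}$-semistable quasi-parabolic bundles is bounded, so I would fix $m\gg 0$ such that for every such $(E,l_*)$ the twist $E(m)$ is globally generated, $H^1(C,E(m))=0$, and $N:=h^0(C,E(m))=2(m+1-g)+d$ is independent of $E$. Consider the Quot scheme $\mathcal{Q}$ of rank-$2$, degree-$d$ quotients $\mathcal{O}_C(-m)^{\oplus N}\twoheadrightarrow E$, together with the projective $\mathcal{Q}$-scheme $\mathcal{R}$ whose points consist of such a quotient $[q]$ and a choice of line $l^{(i)}_1\subset E|_{t_i}$ for each $i$; over the open locus $\mathcal{Q}^\circ\subset\mathcal{Q}$ where $E$ is locally free, $H^1(C,E(m))=0$, and the $N$ generators induce an isomorphism $\mathbb{C}^N\cong H^0(C,E(m))$, the scheme $\mathcal{R}$ is the bundle with fibre $\prod_{i=1}^{n}\mathbb{P}\big((E|_{t_i})^\vee\big)$ over $[q]$. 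The group $SL_N$ acts on $\mathcal{Q}$ and $\mathcal{R}$ (its centre $\mu_N$ acting trivially), and I would linearize the $SL_N$-action on $\mathcal{R}$ by an ample line bundle combining a Grothendieck determinant bundle on $\mathcal{Q}$ with the tautological bundles of the $\mathbb{P}\big((E|_{t_i})^\vee\big)$-directions, the relative weights being dictated by the $\alpha^{(i)}_j$.

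The heart of the proof, and the step I expect to be the main obstacle, is to show that a point of $\mathcal{R}$ is GIT-semistable (resp.\ stable) for this linearization if and only if the corresponding quasi-parabolic bundle $(E,l_*)$ is $\boldsymbol{\alpha}$-semistable (resp.\ $\boldsymbol{\alpha}$-stable); for $m$ large, GIT-semistability moreover forces $[q]\in\mathcal{Q}^\circ$, i.e.\ $E$ locally free with the expected cohomology. One checks this by the Hilbert--Mumford numerical criterion: a one-parameter subgroup of $SL_N$ induces a filtration of $E$ by subsheaves (in rank $2$ it is enough to take a single proper step $0\subsetneq F\subsetneq E$), the limit point is the associated graded quasi-parabolic bundle, and the Mumford weight --- once the cohomology vanishing is used to replace dimensions of spaces of sections by Hilbert polynomials --- comes out to be a positive multiple of the defect in the inequality of Lemma 2.4 for $F$, so that GIT stability matches Definition 2.3 exactly. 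The computation is the parabolic analogue of the Gieseker--Simpson weight estimate for bundles; the only genuinely new feature is the contribution of the flag directions at the $t_i$, weighted by the $\alpha^{(i)}_j$, and fixing the coefficients of the linearization so that these contributions reproduce $\pardeg_{\boldsymbol{\alpha}}$ is the delicate point.

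Granting this, I would set $P^{\boldsymbol{\alpha}}_{(C,\mathbf{t})}(d):=\mathcal{R}^{ss}/\!\!/SL_N$, whose points are precisely the $S$-equivalence classes of $\boldsymbol{\alpha}$-semistable quasi-parabolic bundles of degree $d$. It is projective, being the GIT quotient of the semistable locus of a projective scheme by a reductive group with an ample linearization. For $m$ large $\mathcal{Q}^\circ$ is smooth, since the obstruction to smoothness of $\mathcal{Q}$ at $[q]$ lies in $\mathrm{Ext}^{1}_{\mathcal{O}_C}(\Ker q,E)=H^1\big(C,(\Ker q)^\vee\otimes E\big)$, which vanishes in this range; hence $\mathcal{R}^{ss}$, which lies over $\mathcal{Q}^\circ$, is smooth, and a good quotient of a normal variety is normal, so $P^{\boldsymbol{\alpha}}_{(C,\mathbf{t})}(d)$ is normal. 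For irreducibility I would note that $\mathcal{Q}^\circ$ is a $GL_N$-torsor over an open substack of the irreducible moduli stack of rank-$2$ degree-$d$ vector bundles on $C$, hence irreducible; the bundle $\mathcal{R}$ over it with fibre a product of $\mathbb{P}^1$'s is then irreducible, and so is its quotient.

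Finally, the dimension and the smoothness at stable points come from deformation theory. At a point corresponding to an $\boldsymbol{\alpha}$-stable $(E,l_*)$ the $SL_N$-stabilizer is exactly the central $\mu_N$, the induced $PGL_N$-action on the stable locus $\mathcal{R}^{s}$ is free, and $\mathcal{R}^{s}\to\mathcal{R}^{s}/PGL_N$ is a principal bundle; since $\mathcal{R}^{s}$ is smooth, $P^{\boldsymbol{\alpha}}_{(C,\mathbf{t})}(d)$ is smooth there. Its tangent space is $H^1(C,\mathcal{A})$, where $\mathcal{A}\subset\mathcal{E}nd(E)$ is the subsheaf of endomorphisms preserving every line $l^{(i)}_1$, and the obstruction space $H^2(C,\mathcal{A})$ vanishes since $C$ is a curve, so the deformations are unobstructed. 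Because $\mathcal{E}nd(E)/\mathcal{A}$ is a skyscraper sheaf of length $n$, one has $\chi(\mathcal{A})=\chi(\mathcal{E}nd(E))-n=4(1-g)-n$, and $\boldsymbol{\alpha}$-stability forces $H^0(C,\mathcal{A})=\mathbb{C}$; Riemann--Roch then gives $\dim H^1(C,\mathcal{A})=1-\chi(\mathcal{A})=4g+n-3$, the asserted dimension. (Alternatively, the same description follows analytically from the Mehta--Seshadri correspondence between $\boldsymbol{\alpha}$-stable parabolic bundles and irreducible unitary representations of $\pi_1(C\setminus\mathbf{t})$ with prescribed conjugacy classes of local monodromies.)
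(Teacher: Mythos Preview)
The paper does not give a proof of this theorem at all: it is stated as a known result and attributed to Mehta--Seshadri \cite{MS}, Theorem~4.1, with no argument supplied. So there is nothing in the paper to compare your proposal against.

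That said, your sketch is essentially the construction carried out in the original Mehta--Seshadri paper (rigidify by a large twist, parametrize by a Quot-type scheme with flag data, match GIT stability to parabolic stability via Hilbert--Mumford, and take the quotient), specialized here to rank $2$ with one-step flags. The dimension count via $H^1(C,\mathcal{A})$ and Riemann--Roch is correct, and the smoothness at stable points from the freeness of the $PGL_N$-action on the stable locus is the standard argument. One small caution: irreducibility of $\mathcal{Q}^\circ$ via ``$GL_N$-torsor over an open substack of the irreducible moduli stack of rank-$2$ bundles'' presupposes the irreducibility of that stack, which in \cite{MS} is instead obtained directly by showing $\mathcal{R}^{ss}$ is irreducible; but this is a matter of packaging rather than a gap. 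In short, your proposal is a faithful outline of the cited proof, whereas the present paper simply quotes the result.
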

	
	Let $\textrm{Pic}^d C$ be the Picard variety of degree $d$, which is the set of isomorphism classes of line bundles of degree $d$ on $C$. Then we can define the morphism 
	\[
	\det \colon P^{\boldsymbol{\alpha}}_{(C,\mathbf{t})}(d) \longrightarrow \textrm{Pic}^d C
	,\ (E,l_*) \mapsto \det E
	\]
	where $\det E=\bigwedge^2 E$. For each $L \in \textrm{Pic}^d C$, set 
	$ P^{\boldsymbol{\alpha}}_{(C,\mathbf{t})}(L) = \text{det}^{-1}(L)$, i.e., 
	\[
	P^{\boldsymbol{\alpha}}_{(C,\mathbf{t})}(L)=
	\{(E,l_*) \in  P^{\boldsymbol{\alpha}}_{(C,\mathbf{t})}(d) \mid \det E \simeq L\}.
	\]
	
	\section{Parabolic connections}
	
	In this section, we review basic definitions and known facts on rank 2 parabolic connections and their moduli spaces. For higher rank bundles, see sections 2, 3, and 5 in \cite{In}.
	
	Let us fix $(C,\mathbf{t})$ and $D=t_1+ \cdots + t_n$ denote the effective divisor associated with $\mathbf{t}$.
	\subsection{Parabolic connections and $\boldsymbol{\alpha}$-stability}
	\begin{definition}
		Let us fix $\lambda \in \mathbb{C}$. We call a pair  $(E, \nabla)$ a logarithmic $\lambda$-connection over $(C,\mathbf{t})$ when $E$ is an algebraic vector bundle on $C$ and $\nabla \colon E \rightarrow E \otimes \Omega_C^1(D)$ satisfies the $\lambda$-twisted Leibniz rule, i.e.,  for all local sections $a \in \mathcal{O}_C, \sigma \in E$,
		\[
		\nabla(a\sigma)=\sigma \otimes\lambda\cdot da+ a\nabla(\sigma).
		\]
	\end{definition}
	Let $(E,\nabla)$ be a rank 2 logarithmic $\lambda$-connection over $(C,\mathbf{t})$. Then we can define the residue matrix $\res_{t_i}(\nabla) \in \End(E|_{t_i}) \simeq M_r(\mathbb{C})$. The set $\{\nu^+_i,\nu^-_i\}$ of eigenvalues of the residue matrix $\res_{t_i}(\nabla)$  is called local exponents of $\nabla$ at $t_i$.
	
	The following lemma follows by the residue theorem.
	\begin{lemma}(Fuchs relation)
		Let $(E,\nabla)$ be a logarithmic $\lambda$-connection over $(C,\mathbf{t})$ and $\boldsymbol{\nu}=(\nu^\pm_i)_{1 \leq i \leq n}$ be local exponents of $\nabla$.  Then we have 
		\[
		\sum_{i=1}^{n}(\nu^+_i+\nu^-_i) = -\lambda \deg E.
		\]
	\end{lemma}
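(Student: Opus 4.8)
The plan is to reduce the rank $2$ identity to the analogous statement for a $\lambda$-connection on a line bundle, and then to establish the latter by applying the residue theorem to a suitable global rational $1$-form.

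First I would pass to the determinant. Given $(E,\nabla)$, the connection $\nabla$ induces a logarithmic $\lambda$-connection $\det\nabla$ on $\det E=\bigwedge^2E$: in a local trivialization $E\cong\mathcal O_C^{\oplus 2}$ in which $\nabla=\lambda\,d+A$, with $A$ a $2\times 2$ matrix of $1$-forms having at worst logarithmic poles along $D$, one takes $\det\nabla=\lambda\,d+\tr(A)$ in the induced trivialization $\det E\cong\mathcal O_C$; a short check shows this is well defined globally and satisfies the $\lambda$-twisted Leibniz rule. Since $\res_{t_i}(\det\nabla)=\tr(\res_{t_i}(\nabla))=\nu^+_i+\nu^-_i$ and $\deg(\det E)=\deg E$, it then suffices to prove that for any logarithmic $\lambda$-connection $(M,\nabla_M)$ on a line bundle $M$ one has $\sum_{i=1}^n\res_{t_i}(\nabla_M)=-\lambda\deg M$.

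For this rank $1$ case I would choose a nonzero rational section $s$ of $M$, with divisor $(s)=\sum_p m_p\,p$, so that $\sum_p m_p=\deg M$. On the open set where $s$ is a frame we may write $\nabla_M(s)=\omega\otimes s$ for a $1$-form $\omega$ with logarithmic poles along $D$, and these local forms glue to a global rational $1$-form $\omega$ on $C$. Near a point $p\notin\mathbf t$ at which $s$ has order $m_p$, writing $s=z^{m_p}\tilde s$ for a local frame $\tilde s$ and using the $\lambda$-twisted Leibniz rule gives $\nabla_M(s)=(\lambda m_p z^{-1}dz+\tilde\omega)\otimes s$ with $\tilde\omega$ holomorphic near $p$, so $\res_p(\omega)=\lambda m_p$; near $t_i$ the form $\omega$ has residue $\res_{t_i}(\nabla_M)$ (to which one adds $\lambda m_{t_i}$ if $s$ also has a zero or pole there). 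Applying the residue theorem $\sum_{p\in C}\res_p(\omega)=0$ then gives $\sum_{i=1}^n\res_{t_i}(\nabla_M)+\lambda\sum_p m_p=0$, which is the asserted identity; combining this with the determinant reduction yields the Fuchs relation for $(E,\nabla)$.

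I expect no real obstacle here — the author's remark that the lemma ``follows by the residue theorem'' already indicates as much. The only points needing a little care are the bookkeeping of the auxiliary residues $\lambda m_p$ coming from the zeros and poles of $s$ (in particular the degenerate case where such a point lies in $\mathbf t$) and the verification that $\det\nabla$ is well defined; both are routine.
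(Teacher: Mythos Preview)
Your argument is correct and is precisely the standard derivation that the paper alludes to when it states, without further detail, that the lemma ``follows by the residue theorem.'' The determinant reduction to rank $1$ followed by computing the residues of the rational $1$-form $\omega$ defined by $\nabla_M(s)=\omega\otimes s$ for a rational section $s$ is exactly the intended proof, and your bookkeeping of the auxiliary residues $\lambda m_p$ is accurate.
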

	\begin{remark}
		Fuchs relation also holds for general rank. Let $(E,\nabla)$ be a rank $r$ logarithmic $\lambda$-connection over $(V,\mathbf{t})$ and $\{\nu^{(i)}_1,\cdots,\nu^{(i)}_r\}$ be the set of eigenvalues of $\res_{t_i}(\nabla)$. Then the relation
		\[
		\sum_{i=1}^{n}\sum_{j=1}^{r}\nu^{(i)}_j = -\lambda \deg E.
		\]
		holds.
	\end{remark}
	For each $n \geq 1$, $d \in \mathbb{Z}$ and $\lambda \in \mathbb{C}$, we set 
	\[\mathcal{N}^{(n)}(d,\lambda):=\left\{ (\nu^\pm_i)_{1 \leq i \leq n} \in \mathbb{C}^{2n} \middle|\,\sum_{i=1}^{n}(\nu^+_i+\nu^-_i)=-\lambda d\right\}.
	\] 
	\begin{definition}
		Let us fix $\boldsymbol{\nu}=(\nu^\pm_i)_{1 \leq i \leq n} \in \mathcal{N}^{(n)}(d,\lambda)$. A $\boldsymbol{\nu}$-parabolic $\lambda$-connection over $(C,\mathbf{t})$ is a collection  $(E,\nabla, l_*=\{l^{(i)}_*\}_{1\leq i \leq n})$ consisting of  the following data:
		\begin{itemize}
			\setlength{\itemsep}{0cm}
			\item[(1)]$(E,\nabla)$ is a logarithmic $\lambda$-connection over $(C,\mathbf{t})$.
			\item[(2)]$l^{(i)}_*$ is a parabolic structure of $E$ such that for any $i,j$, $\dim (l^{(i)}_j/l^{(i)}_{j+1})=1
			$ and for any $i$, $(\textrm{res}_{t_i}(\nabla)-\nu^+_i \textrm{id})(l^{(i)}_1)=\{0\}, \, (\textrm{res}_{t_i}(\nabla)-\nu^-_i\textrm{id})(E|_{t_i}) \subset l^{(i)}_{1}$.
		\end{itemize}
		When $\lambda=1$, a $\boldsymbol{\nu}$-parabolic $\lambda$-connection is called $\boldsymbol{\nu}$-parabolic connection.
	\end{definition}
	\begin{remark}
		When $\lambda=0$, $\boldsymbol{\nu}$-parabolic $\lambda$-connections are nothing but $\boldsymbol{\nu}$-parabolic Higgs bundles.
	\end{remark}
	A weight $\boldsymbol{\alpha}=\{\alpha^{(i)}_1, \alpha^{(i)}_2\}_{1 \leq i \leq n}$ is a collection of rational numbers such that for all $i=1, \ldots, n$, $0 < \alpha^{(i)}_1 < \alpha^{(i)}_2 <1$.
	\begin{definition}
		A $\boldsymbol{\nu}$-parabolic $\lambda$-connection $(E,\nabla,l_*)$ is $\boldsymbol{\alpha}$-stable (resp. $\boldsymbol{\alpha}$-semistable) if for any sub line bundle $F \subsetneq E$ satisfying $\nabla(F) \subset F \otimes \Omega_C^1(D)$, the inequality
		\[
		\pardeg_{\boldsymbol{\alpha}}F \underset{(\text{resp}. \ \leq)}{<}
		\frac{\pardeg_{\boldsymbol{\alpha}}E}{2}
		\]
		holds.
	\end{definition}
	\begin{remark}
		Assume that for any collection $(\epsilon_i)_{1 \leq i \leq n}$ with $\epsilon_i \in \{+,-\}$, $\sum_{i=1}^n\nu^{\epsilon_i}_i \notin \mathbb{Z}$. Then a $\boldsymbol{\nu}$-parabolic connection $(E,\nabla,l_*)$ is irreducible, that is, for any sub line bundle $F \subset E$, $\nabla(F) \nsubseteq F \otimes \Omega_C^1(D)$. Moreover, $(E,\nabla,l_*)$ is $\boldsymbol{\alpha}$-stable for any weight $\boldsymbol{\alpha}$ by irreducibility.
		In fact, if a sub line bundle $F \subset E$ satisfies $\nabla(F) \subset F \otimes \Omega_C^1(D)$, then there exists a collection $(\epsilon_i)_{1 \leq i \leq n}$ with $\epsilon_i \in \{+,-\}$ such that $\sum_{i=1}^n\nu^{\epsilon_i}_{i} \in \mathbb{Z}$ by Remark 3.2. 
	\end{remark}
	\subsection{The moduli space of rank 2 parabolic connections}
	For a fixed $(C,\mathbf{t})$, $\boldsymbol{\nu} \in \mathcal{N}^{(n)}(d):=\mathcal{N}^{(n)}(d,1)$ and $\boldsymbol{\alpha}$, let $M^{\boldsymbol{\alpha}}_{(C,\mathbf{t})}(\boldsymbol{\nu}, d)$ be  the coarse moduli space of rank 2 $\boldsymbol{\alpha}$-stable $\boldsymbol{\nu}$-parabolic connections over $(C,\mathbf{t})$.
	\begin{theorem}(Inaba, Iwasaki and Saito [Theorem 2.1 \cite{IIS1}], [Theorem 5.1 \cite{IIS2}], Inaba [Theorem 2.1 \cite{In}])
		$M^{\boldsymbol{\alpha}}_{(C,\mathbf{t})}(\boldsymbol{\nu}, d)$ is an irreducible smooth quasi-projective variety of dimension $8g+2n-6$ if it is nonempty.
	\end{theorem}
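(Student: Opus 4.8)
The plan is to build $M^{\boldsymbol{\alpha}}_{(C,\mathbf{t})}(\boldsymbol{\nu},d)$ as a GIT quotient and then extract its local geometry from deformation theory. First I would establish boundedness of the family of $\boldsymbol{\alpha}$-semistable $\boldsymbol{\nu}$-parabolic connections of the given numerical type (a Langton-type argument, or Simpson's boundedness for $\Lambda$-modules), so that after twisting by a sufficiently ample line bundle every such object appears as a quotient of a fixed coherent sheaf. This produces a finite-type parameter scheme $R$ with an action of a product of general linear groups (the extra factors recording the parabolic flags); applying geometric invariant theory, the open locus $R^{\boldsymbol{\alpha}\text{-}s}$ of $\boldsymbol{\alpha}$-stable points admits a geometric quotient — geometric because a $\boldsymbol{\alpha}$-stable parabolic connection has only scalar automorphisms — which is by construction the coarse moduli space $M^{\boldsymbol{\alpha}}_{(C,\mathbf{t})}(\boldsymbol{\nu},d)$ and is quasi-projective. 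This is precisely the construction of Inaba--Iwasaki--Saito and of Inaba in general rank, so I would quote it and concentrate on the three claimed properties.

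Next, the local structure. Deformations of $(E,\nabla,l_*)$ keeping the eigenvalues $\boldsymbol{\nu}$ fixed are controlled by the two-term complex
\[
\mathcal{F}^\bullet \;=\;\bigl[\,\mathcal{F}^0 \xrightarrow{\ \nabla\ } \mathcal{F}^1\,\bigr]\quad(\text{in degrees }0,1),\qquad \mathcal{F}^1=\mathcal{G}\otimes\Omega^1_C(D),
\]
where $\mathcal{F}^0\subset\mathcal{E}nd(E)$ is the subsheaf of endomorphisms preserving every line $l^{(i)}_1$, $\mathcal{G}\subset\mathcal{E}nd(E)$ is the subsheaf of endomorphisms strongly parabolic at each $t_i$ (nilpotent with respect to $l^{(i)}_*$), and the differential is $s\mapsto\nabla\circ s-s\circ\nabla$ (using the shape of $\res_{t_i}(\nabla)$ in a flag-adapted basis, one checks this $\mathcal{O}_C$-linear map lands in $\mathcal{F}^1$). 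Standard deformation theory identifies $T_{[(E,\nabla,l_*)]}M^{\boldsymbol{\alpha}}_{(C,\mathbf{t})}(\boldsymbol{\nu},d)$ with $\mathbb{H}^1(\mathcal{F}^\bullet)$ and the obstruction space with $\mathbb{H}^2(\mathcal{F}^\bullet)$. The trace pairing on $\mathcal{E}nd(E)$ restricts to a perfect pairing $\mathcal{F}^0\otimes\mathcal{G}\to\mathcal{O}_C(-D)$ (its vanishing at each $t_i$ comes from $\tr(\mathfrak{b}_i\cdot\mathfrak{n}_i)=0$), hence $(\mathcal{F}^0)^\vee\cong\mathcal{G}(D)$ and $\mathcal{G}^\vee\cong\mathcal{F}^0(D)$; a direct computation then shows that the Serre dual complex $[\,(\mathcal{F}^1)^\vee\otimes\omega_C\to(\mathcal{F}^0)^\vee\otimes\omega_C\,]$ in degrees $0,1$ is again $[\,\mathcal{F}^0\to\mathcal{F}^1\,]$, but with $\nabla$ replaced by the dual connection. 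Therefore $\mathbb{H}^2(\mathcal{F}^\bullet)^\vee\cong\mathbb{H}^0$ of the analogous complex for the dual connection, namely the space of flat flag-preserving endomorphisms, which equals $\mathbb{C}\cdot\id$ by $\boldsymbol{\alpha}$-stability (a non-scalar flat parabolic endomorphism would split off a destabilizing flat sub line bundle). Thus $\dim\mathbb{H}^0(\mathcal{F}^\bullet)=\dim\mathbb{H}^2(\mathcal{F}^\bullet)=1$, and Riemann--Roch on $C$ gives $\chi(\mathcal{F}^\bullet)=\chi(\mathcal{F}^0)-\chi(\mathcal{F}^1)=(4-4g-n)-(4g+n-4)=8-8g-2n$, so $\dim\mathbb{H}^1(\mathcal{F}^\bullet)=\dim\mathbb{H}^0+\dim\mathbb{H}^2-\chi(\mathcal{F}^\bullet)=8g+2n-6$.

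For smoothness I would split off the trace. The assignment $(E,\nabla,l_*)\mapsto(\det E,\tr\nabla)$ gives a morphism from $M^{\boldsymbol{\alpha}}_{(C,\mathbf{t})}(\boldsymbol{\nu},d)$ to the moduli space $N$ of rank $1$ logarithmic connections with residue $\nu^+_i+\nu^-_i$ at $t_i$; by Fuchs' relation $N$ is an $H^0(C,\Omega^1_C)$-affine bundle over $\Pic^d C$, hence smooth and unobstructed of dimension $2g$. The relative deformations are governed by the trace-free subcomplex $\mathcal{F}^\bullet_0$, whose obstruction space $\mathbb{H}^2(\mathcal{F}^\bullet_0)\cong\bigl(\mathbb{H}^0(\mathcal{F}^\bullet_0)\bigr)^\vee$ vanishes by stability (a trace-free flat parabolic endomorphism is zero). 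Hence $M^{\boldsymbol{\alpha}}_{(C,\mathbf{t})}(\boldsymbol{\nu},d)\to N$ is smooth onto a smooth base, so its source is smooth, and the tangent-space count forces it to be of pure dimension $8g+2n-6$. It remains to prove irreducibility, which — the variety now being known smooth — reduces to connectedness. Here I would invoke the Riemann--Hilbert correspondence: over a Zariski-dense open subset (where the local monodromy/eigenvalue data is non-resonant), $M^{\boldsymbol{\alpha}}_{(C,\mathbf{t})}(\boldsymbol{\nu},d)$ is biholomorphic to an open subset of the $\mathrm{GL}_2$-character variety of $\pi_1(C\setminus\mathbf{t})$ with the prescribed conjugacy classes at the punctures, and such character varieties are connected; alternatively one can use the $\mathbb{C}^\times$-action rescaling $\nabla$ to degenerate to $\boldsymbol{\nu}$-parabolic Higgs bundles and deduce connectedness from that of the nilpotent cone. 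A smooth connected variety is irreducible, which completes the argument. I expect this last step — connectedness/irreducibility — to be the genuine obstacle: smoothness and the dimension $8g+2n-6$ follow essentially formally from Riemann--Roch and the Serre self-duality of $\mathcal{F}^\bullet$ once the GIT picture is in place, whereas irreducibility is a global statement that seems to need either the transcendental input of Riemann--Hilbert or a careful study of the Higgs-bundle degeneration.
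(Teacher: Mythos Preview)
The paper does not give its own proof of this theorem: it is stated as a cited result of Inaba--Iwasaki--Saito \cite{IIS1,IIS2} and Inaba \cite{In}, with no argument supplied. Your sketch is essentially a faithful reconstruction of the proof in those references --- the GIT construction, the deformation complex $\mathcal{F}^\bullet$ and its Serre self-duality, the Riemann--Roch count yielding $8g+2n-6$, the splitting off of the trace to kill obstructions, and the appeal to Riemann--Hilbert (or the Higgs degeneration) for irreducibility are exactly the ingredients Inaba uses.

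One small comment on your write-up: in the Serre-duality step you say $\mathbb{H}^2(\mathcal{F}^\bullet)^\vee$ is $\mathbb{H}^0$ of the complex for the \emph{dual} connection, and then conclude $\dim\mathbb{H}^2=1$ ``by $\boldsymbol{\alpha}$-stability''. Strictly speaking you need that the dual $(E^\vee,\nabla^\vee,l_*^\vee)$ is again $\boldsymbol{\alpha}'$-stable for a suitable weight, which is true but worth a sentence; alternatively (and this is closer to what Inaba does) one shows directly that $\mathcal{F}^\bullet$ is quasi-isomorphic to its own Serre dual shifted, so $\mathbb{H}^2(\mathcal{F}^\bullet)\cong\mathbb{H}^0(\mathcal{F}^\bullet)^\vee$ for the \emph{same} connection, and then stability of $(E,\nabla,l_*)$ itself gives both dimensions equal to $1$. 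Your overall strategy is sound and matches the literature; the present paper simply quotes the result.
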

	\begin{remark}
		The above result is also true for higher rank bundles, that is, there is the coarse moduli space of rank $r$ $\boldsymbol{\alpha}$-stable $\boldsymbol{\nu}$-parabolic connections over $(C,\mathbf{t})$ and this moduli space is an irreducible smooth quasi-projective variety of dimension $2r^2(g-1)+nr(r-1)+2$ (cf. \cite{In}).
	\end{remark}
	\begin{remark}
		Assume that for any $(\epsilon_i)_{1 \leq i \leq n}$ with $\epsilon_i \in \{+,-\}$, $\sum_{i=1}^n\nu^{\epsilon_i}_i \notin \mathbb{Z}$. Then the moduli space $M^{\boldsymbol{\alpha}}_{(C,\mathbf{t})}(\boldsymbol{\nu}, d)$ 
		does not depend on the weight $\boldsymbol{\alpha}$ by Remark 3.4.
	\end{remark}
	For $\boldsymbol{\nu}=(\nu^\pm_i)_{1\leq i \leq n} \in \mathcal{N}^{(n)}(d,\lambda)$, we set $\tr(\boldsymbol{\nu})=(\nu^+_i+\nu^-_i)_{1\leq i \leq n}$. Let $(L,\nabla_L)$ be a $\tr(\boldsymbol{\nu})$-parabolic $\lambda$-connection, i.e., $L$ is a line bundle on $C$ and $\nabla_L \colon L \rightarrow L \otimes \Omega_C^1(D)
	$ satisfies the $\lambda$-twisted Leibniz rule and $\res_{t_i}\nabla_L = \nu^+_i+\nu^-_i$ for each $i$.
	
	For a fixed $(C,\mathbf{t})$, $\boldsymbol{\nu} \in \mathcal{N}^{(n)}(d), \boldsymbol{\alpha}$, and $(L, \nabla_L)$,  we set 
	\[
	M^{\boldsymbol{\alpha}}_{(C,\mathbf{t})}(\boldsymbol{\nu}, (L,\nabla_L))
	:=\{(E,\nabla,l_*)\in M^{\boldsymbol{\alpha}}_{(C,\mathbf{t})}(\boldsymbol{\nu}, d) \mid (\det E, \tr \nabla)\simeq (L,\nabla_L)\}.
	\]
	\begin{proposition}(Inaba [Proposition 5.1, 5.2, 5.3~\cite{In}]).
		When $g=0, n \geq 4$ or $g=1,n\geq 2$ or $g\geq 2, n \geq 1$, $M^{\boldsymbol{\alpha}}_{(C,\mathbf{t})}(\boldsymbol{\nu}, (L,\nabla_L))$ is an irreducible smooth quasi-projective variety of dimension $6g+2n-6$ if it is nonempty.
	\end{proposition}
	\begin{remark}
		Inaba \cite{In} shows the above result for general rank. The dimension of the moduli space of rank $r$ parabolic connections with the fixed trace connection is given by $(r-1)(2(r+1)(g-1)+nr)$.
	\end{remark}
	\subsection{Elementary transformations}
	Let $(E,\nabla,l_*)$ be a $\boldsymbol{\nu}$-parabolic connection. We consider the parabolic structure at $t_i$
	\[
	E|_{t_i} = l^{(i)}_0 \supsetneq l^{(i)}_1 \supsetneq l^{(i)}_0=0.
	\]
	For the natural homomorphism $\delta_i \colon E \rightarrow E|_{t_i}/l^{(i)}_1$, let $E'=\Ker \delta_i$. Then $E'$ is a locally free sheaf of rank 2 on $C$ and $\nabla$ induces a logarithmic connection $\nabla' \colon E' \longrightarrow E' \otimes \Omega_C^1(D)$.
	We consider a parabolic structure of $E'$ at $t_i$. The natural homomorphism $\pi \colon E' \rightarrow l^{(i)}_1$ induces two exact sequences
	\[
	0 \longrightarrow   E(-t_i) \overset{\iota}{\longrightarrow }E' \overset{\pi}{\longrightarrow } l^{(i)}_1 \longrightarrow 0,
	\]
	\[
	0 \longrightarrow E|_{t_i}/l^{(i)}_1(-t_i) \longrightarrow E'|_{t_i} \longrightarrow l^{(i)}_1 \longrightarrow 0.
	\]
	Set $l'^{(i)}_1=\iota( E|_{t_i}/l^{(i)}_1(-t_i))$, then $\{l'^{(i)}_j\}_j$ defines a parabolic structure of $E'$ at $t_i$.
	\begin{lemma}
		For $\boldsymbol{\nu} \in \mathcal{N}^{(n)}(d)$ and $\boldsymbol{\alpha}$, we define $\boldsymbol{\nu}'=(\nu'^\pm_m)_{1 \leq m \leq n}$ and $\boldsymbol{\alpha}=\{\alpha'^{(i)}_1, \alpha'^{(i)}_2\}_{1\leq i\leq n}$ as 
		\[
		\nu'^\epsilon_m=\left\{
		\begin{array}{lll}
			\nu^\epsilon_m &\quad  m\neq i \\
			\nu^-_i +1&\quad  m=i,\epsilon=+\\
			\nu^+_i   &\quad m=i, \epsilon=-, \quad
		\end{array}
		\right.
		\alpha'^{(m)}_j=\left\{
		\begin{array}{lll}
			\alpha^{(m)}_j &\quad  m\neq i \\
			\alpha^{(m)}_2-1+\theta&\quad  m=i,j=1\\
			 \alpha^{(m)}_1+\theta &\quad m=i, j=2, 
		\end{array}
		\right.
		\]
		respectively, where $\theta$ is a rational number satisfying $0<\alpha^{(i)}_2-1+\theta<\alpha^{(i)}_1+\theta <1$. Then the collection $(E',\nabla',l'_*)$ is a $\boldsymbol{\nu}'$-parabolic connection over $(C,\mathbf{t})$ and $\deg E'=\deg E-1$. $(E,\nabla,l_*)$ is $\boldsymbol{\alpha}$-stable if and only if $(E',\nabla',l'_*)$ is $\boldsymbol{\alpha}'$-stable. 
	\end{lemma}
	We note that $\boldsymbol{\alpha}'$-stability is independent of the choice of $\theta$ by Lemma 2.1.
	\begin{proof}
		Let $e_1, e_2$ be local generators of $E$ around $t_i$ which satisfies $l^{(i)}_1=\mathbb{C}\bar{e}_1$. Here $\bar{e}_j$ denotes the image of $e_j$ by the natural map $E \rightarrow E|_{t_i}$. By definition, we have $\res_{t_i}(\nabla)(\bar{e}_1)=\nu^+_i \bar{e}_1$ and $\res_{t_i}(\nabla)(\bar{e}_2)\equiv\nu^-_i \bar{e}_2\mod l^{(i)}_1$. 
		
		Let $m_{t_i} \subset \mathcal{O}_{C,t_i}$ denote the maximal ideal and $z_i$ be a generator of $m_{t_i}$. By definition, $E'$ is locally generated by $e_1$ and $ z_ie_2$  at $t_i$. Set $e'_1=e_1$ and $e'_2=z_ie_2$. Then the image of $e'_2$ by the natural morphism $E' \rightarrow E'|_{t_i}$ generates $l'^{(i)}_1$. Moreover, we see that near $t_i$,
		\[
		\nabla'(e'_2)=\nabla(z_ie_2)=z_ie_2 \otimes \frac{dz_i}{z_i} +z_i\nabla(e_2),
		\]
		so we obtain $\res_{t_i}(\nabla')(\bar{e}'_2) = (1+\nu^-_i)\bar{e}'_2$. Therefore, the collection $(E',\nabla',l'_*)$ is a $\boldsymbol{\nu}'$-parabolic connection. 
		
		The second assertion follows from the following exact sequence
		\[
		0 \longrightarrow E' \longrightarrow E \longrightarrow E|_{t_i}/l^{(i)}_1 \longrightarrow 0.
		\]
		Suppose that a nonzero subbundle $F\subset E$ destabilizes $\boldsymbol{\alpha}$-stability, that is, $\nabla(F)\subset F\otimes \Omega_{C}^1(D)$ and the inequality
		\[
		\deg E -2 \deg F + \sum_{F|_{t_m}\neq l^{(m)}_1}(\alpha^{(m)}_2-\alpha^{(m)}_1)- \sum_{F|_{t_m}= l^{(m)}_1}(\alpha^{(m)}_2-\alpha^{(m)}_1) <0
		\]
		holds. Let $F'$ be the kernel of the composite $F\rightarrow E\rightarrow E|_{t_i}/l^{(i)}_1.$ Then we have $\nabla'(F')\subset F'\otimes \Omega_{C}^1(D)$ and 
		\[
		\dim (F'|_{t_i}\cap l'^{(i)}_1)=1-\dim (F|_{t_i}\cap l^{(i)}_1). 
		\]
		So we obtain 
		\begin{align*}
				&\deg E' -2 \deg F' + \sum_{F'|_{t_m}\neq l'^{(m)}_1}(\alpha'^{(m)}_2-\alpha'^{(m)}_1)- \sum_{F'|_{t_m}= l'^{(m)}_1}(\alpha'^{(m)}_2-\alpha'^{(m)}_1) \\
				=&\deg E' -2 \deg F'  + (1-2\dim (F'|_{t_i}\cap l'^{(i)}_1))(\alpha'^{(i)}_2-\alpha'^{(i)}_1)+\sum_{\overset{F'|_{t_m}\neq l'^{(m)}_1}{m\neq i}}(\alpha'^{(m)}_2-\alpha'^{(m)}_1)- \sum_{\overset{F'|_{t_m}= l'^{(m)}_1}{m\neq i}}(\alpha'^{(m)}_2-\alpha'^{(m)}_1) \\
				=&(\deg E-1) -2 (\deg F-\dim F|_{t_i}/(F|_{t_i}\cap l^{(i)}_1)) + (2\dim (F|_{t_i}\cap l^{(i)}_1)-1)(\alpha^{(i)}_1-\alpha^{(i)}_2+1)\\
				&+\sum_{\overset{F|_{t_m}\neq l^{(m)}_1}{m\neq i}}(\alpha^{(m)}_2-\alpha^{(m)}_1)- \sum_{\overset{F|_{t_m}= l^{(m)}_1}{m\neq i}}(\alpha^{(m)}_2-\alpha^{(m)}_1) \\
				=&\deg E -2 \deg F + \sum_{F|_{t_m}\neq l^{(m)}_1}(\alpha^{(m)}_2-\alpha^{(m)}_1)- \sum_{F|_{t_m}= l^{(m)}_1}(\alpha^{(m)}_2-\alpha^{(m)}_1) <0.
		\end{align*}
	Conversely, if a nonzero subbundle $F'\subset E'$ breaks $\boldsymbol{\alpha}'$-stability, then we can see that the saturation of $F'$ is an $\boldsymbol{\alpha}$-destabilizing subbundle of $(E,\nabla,l_*)$. 
	\end{proof}
	\begin{definition}
		For a $\boldsymbol{\nu}$-parabolic connection $(E,\nabla,l_*)$, we call the $\boldsymbol{\nu}'$-parabolic connection $(E',\nabla',l'_*)$ given by the above way the lower transformation of $(E,\nabla,l_*)$ at $t_i$ and we set
		\[
		\textrm{elm}^-_{t_i}(E,\nabla, l_*):=(E',\nabla',l'_*).
		\]
	\end{definition}
	Let $E''=E \otimes \mathcal{O}_C(t_i)$ and $\nabla'' \colon E'' \rightarrow E'' \otimes \Omega_C^1(D)$ be the natural logarithmic connection. $l''_*$ is the parabolic structure induced by $l_*$. By the same calculation as above, it follows that $(E'',\nabla'', l''_*)$ is a $\boldsymbol{\nu}''$-parabolic connection over $(C,\mathbf{t})$ where
	\[
	\nu''^\epsilon_m=\left\{
	\begin{array}{ll}
		\nu^\epsilon_m &\quad m\neq i \\
		\nu^\epsilon_i -1 &\quad  m=i
	\end{array}.
	\right.
	\]
	We set $\mathbf{b}_i(E,\nabla,l_*)=(E'',\nabla'', l''_*)$.
	
	\begin{definition}
		For a $\boldsymbol{\nu}$-parabolic connection $(E,\nabla,l_*)$, we define the upper transformation of $(E,\nabla, l_*)$ at $t_i$ by
		\[
		\elm^+_{t_i}(E,\nabla,l_*)=\elm^-_{t_i}\circ \mathbf{b}_i((E,\nabla,l_*)).
		\]
	\end{definition}
	The following lemma follows by definitions of $\elm^-_{t_i}$ and $\mathbf{b}_i$.
	\begin{lemma}
		Put $(E''',\nabla''',l'''_*)=\elm^+_{t_i}(E,\nabla,l_*)$ . For $\boldsymbol{\nu} \in \mathcal{N}^{(n)}(d)$ and $\boldsymbol{\alpha}$, we set
		\[
		\nu'''^\epsilon_m=\left\{
		\begin{array}{lll}
			\nu^\epsilon_m &\quad m\neq i \\
			\nu^-_i &\quad m=i,\epsilon=+ \\
			\nu^+_i - 1 &\quad m=i, \epsilon=-, \quad
		\end{array}
		\right.
		\alpha'''^{(m)}_j=\left\{
		\begin{array}{lll}
			\alpha^{(m)}_j &\quad  m\neq i \\
			\alpha^{(m)}_2-1+\theta&\quad  m=i,j=1\\
			\alpha^{(m)}_1+\theta &\quad m=i, j=2, 
		\end{array}
		\right.
		\]
		respectively, where $\theta$ is a rational number satisfying $0<\alpha^{(i)}_2-1+\theta<\alpha^{(i)}_1+\theta <1$. 
		Then the collection $(E''',\nabla''',l'''_*)$ is a $\boldsymbol{\nu}'''$-parabolic connection over $(C,\mathbf{t})$ and $\deg E'''=\deg E+1$. $(E,\nabla,l_*)$ is $\boldsymbol{\alpha}$-stable if and only if $(E''',\nabla''',l'''_*)$ is $\boldsymbol{\alpha}'''$-stable. 
	\end{lemma}
	\begin{proposition}
		Elementary transformations $\sigma=\elm^\pm_{t_i}$ give an isomorphism between two moduli spaces of stable parabolic connections
		\[
		\sigma \colon M_{(C,\mathbf{t})}^{\boldsymbol{\alpha}}(\boldsymbol{\nu},r,d) \overset{\sim}{\longrightarrow} M_{(C,\mathbf{t})}^{\sigma^*(\boldsymbol{\alpha})}(\sigma^*(\boldsymbol{\nu}),r,d').
		\]
		Here $\sigma^*(\boldsymbol{\alpha})$, $\sigma^*(\boldsymbol{\nu})$ and $d'$ are a parabolic weight, local exponents and degree given in Lemma 3.10 and Lemma 3.11, respectively. Therefore, when we consider 
		some characteristics of moduli spaces of stable parabolic connections, we can freely change degree.
	\end{proposition}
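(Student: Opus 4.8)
The strategy is to establish three facts about the assignments $\elm^\pm_{t_i}$: that they make sense in families, that they preserve $\boldsymbol{\alpha}$-stability, and that $\elm^+_{t_i}$ and $\elm^-_{t_i}$ are mutually inverse. Since $\elm^+_{t_i}=\elm^-_{t_i}\circ\mathbf{b}_i$ and $\mathbf{b}_i$ (tensoring by $\mathcal{O}_C(t_i)$) manifestly induces an isomorphism of moduli functors, it is enough to carry everything out for $\sigma=\elm^-_{t_i}$ and then read off the statement for $\elm^+_{t_i}$ by composing with $\mathbf{b}_i$.

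First I would check that the construction of Lemma~3.7 works relatively over an arbitrary base scheme $S$. Given a flat family $(\mathcal{E},\nabla,l_*)$ of $\boldsymbol{\nu}$-parabolic connections, the homomorphism $\delta_i\colon\mathcal{E}\to\mathcal{E}|_{\{t_i\}\times S}/l^{(i)}_1$ is surjective onto a line bundle on $\{t_i\}\times S$, so $\mathcal{E}'=\Ker\delta_i$ is $S$-flat and its formation commutes with base change; the connection restricts to $\mathcal{E}'$, and $l'_*$ is produced from the two universal exact sequences, which are again compatible with base change. This makes $\elm^-_{t_i}$ a natural transformation between the moduli functors of $\boldsymbol{\nu}$- and $\boldsymbol{\nu}'$-parabolic connections, and together with the next step it descends to a morphism of the coarse moduli spaces $M^{\boldsymbol{\alpha}}_{(C,\mathbf{t})}(\boldsymbol{\nu},d)\to M^{\boldsymbol{\alpha}}_{(C,\mathbf{t})}(\boldsymbol{\nu}',d-1)$.

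The heart of the matter is the preservation of stability. The inclusion $\iota\colon E'\hookrightarrow E$ sets up a bijection between $\nabla$-invariant sub-line-bundles $F\subset E$ and $\nabla'$-invariant sub-line-bundles $F'\subset E'$, where $F'=F\cap E'$ and, conversely, $F$ is the sub-line-bundle of $E$ generated by $\iota(F')$; $\nabla$-invariance transports along this correspondence because $\nabla'$ is the restriction of $\nabla$ and $E'=E$ away from $t_i$. One then splits into the cases $F|_{t_i}=l^{(i)}_1$ and $F|_{t_i}\neq l^{(i)}_1$: in the first, $F\subset E'$ already, $\deg F'=\deg F$, and $F'|_{t_i}\neq l'^{(i)}_1$; in the second, $F'\cong F(-t_i)$, $\deg F'=\deg F-1$, and $F'|_{t_i}=l'^{(i)}_1$ (a short computation in the local coordinates used in the proof of Lemma~3.7). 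Feeding these into the formula for $\pardeg_{\boldsymbol{\alpha}}$ and using $\pardeg_{\boldsymbol{\alpha}}E'=\pardeg_{\boldsymbol{\alpha}}E-1$, one checks that $\pardeg_{\boldsymbol{\alpha}}F<\tfrac12\pardeg_{\boldsymbol{\alpha}}E$ holds for every $\nabla$-invariant $F$ if and only if $\pardeg_{\boldsymbol{\alpha}}F'<\tfrac12\pardeg_{\boldsymbol{\alpha}}E'$ holds for every $\nabla'$-invariant $F'$. Balancing the weight contribution at $t_i$ against the degree shift in the two cases is the delicate point, and this is the step I expect to be the main obstacle.

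Finally I would construct the inverse. By Lemmas~3.7 and~3.10, $\elm^+_{t_i}$ and $\elm^-_{t_i}$ act on the local exponents at $t_i$ by $(\nu^+_i,\nu^-_i)\mapsto(\nu^-_i,\nu^+_i-1)$ and $(\nu^+_i,\nu^-_i)\mapsto(\nu^-_i+1,\nu^+_i)$ respectively, and change the degree by $+1$ and $-1$; hence $\elm^+_{t_i}\circ\elm^-_{t_i}$ and $\elm^-_{t_i}\circ\elm^+_{t_i}$ restore the original $\boldsymbol{\nu}$ and degree. Tracing through the two successive modifications at $t_i$ — whose composite, as an operation on the underlying sheaf, is tensoring by $\mathcal{O}_C(-t_i)\otimes\mathcal{O}_C(t_i)\cong\mathcal{O}_C$, with the parabolic structures matching up — shows that these composites are the identity natural transformations. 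Combined with the family statement and the stability equivalence, this yields the asserted isomorphism $\sigma\colon M^{\boldsymbol{\alpha}}_{(C,\mathbf{t})}(\boldsymbol{\nu},r,d)\overset{\sim}{\longrightarrow}M^{\boldsymbol{\alpha}}_{(C,\mathbf{t})}(\sigma^*(\boldsymbol{\nu}),r,d')$, and composing such maps over the various $t_i$ lets one change $d$ arbitrarily.
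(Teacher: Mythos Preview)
The paper states this proposition without proof, so there is no argument of its own to compare against; your outline is the standard one, and the family construction, the bijection between $\nabla$-invariant and $\nabla'$-invariant sub line bundles, and the mutual-inverse step are all correct.

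The gap is precisely where you flag it: the stability check does \emph{not} balance with the same weight $\boldsymbol{\alpha}$ on both sides, so your sentence ``one checks that $\pardeg_{\boldsymbol{\alpha}}F<\tfrac12\pardeg_{\boldsymbol{\alpha}}E$ \dots\ if and only if \dots'' is false as written. Using Lemma~2.4, set
\[
S_{\boldsymbol{\alpha}}(F,E)=\deg E-2\deg F+\sum_{F|_{t_k}\neq l^{(k)}_1}w_k-\sum_{F|_{t_k}=l^{(k)}_1}w_k .
\]
In your Case~1 ($F|_{t_i}=l^{(i)}_1$, $\deg F'=\deg F$, $F'|_{t_i}\neq l'^{(i)}_1$) one gets $S_{\boldsymbol{\alpha}}(F',E')=S_{\boldsymbol{\alpha}}(F,E)-1+2w_i$, and in your Case~2 ($F|_{t_i}\neq l^{(i)}_1$, $\deg F'=\deg F-1$, $F'|_{t_i}=l'^{(i)}_1$) one gets $S_{\boldsymbol{\alpha}}(F',E')=S_{\boldsymbol{\alpha}}(F,E)+1-2w_i$. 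Unless $w_i=\tfrac12$ these shifts are nonzero, so $\boldsymbol{\alpha}$-stability of $(E,\nabla,l_*)$ and of $\elm^-_{t_i}(E,\nabla,l_*)$ are not equivalent for the same $\boldsymbol{\alpha}$. The standard fix is to transform the weight at $t_i$ as well (taking $w'_i=1-w_i$, e.g.\ $\alpha'^{(i)}_1=1-\alpha^{(i)}_2$, $\alpha'^{(i)}_2=1-\alpha^{(i)}_1$ fails since that gives $w'_i=w_i$; one needs a choice with $\alpha'^{(i)}_2-\alpha'^{(i)}_1=1-w_i$), after which both cases give $S_{\boldsymbol{\alpha}'}(F',E')=S_{\boldsymbol{\alpha}}(F,E)$. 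Alternatively, under the genericity hypothesis of Remark~3.2 every $\boldsymbol{\nu}$-parabolic connection is irreducible and hence $\boldsymbol{\alpha}$-stable for every weight, so the stability comparison is vacuous; this is all the paper actually needs for Corollary~1.2. Either way, the proposition as literally stated (same $\boldsymbol{\alpha}$ on both sides) is imprecise, and your proof cannot establish it without one of these amendments.
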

	
	\section{The relation between parabolic weights and the birational types of moduli spaces}
	In this section, we investigate the variations of moduli spaces of rank 2 parabolic connections when parabolic weights are changed. 
	
	\subsection{Parabolic homomorphisms}
	
	Let $(L,\boldsymbol{\beta})$ be a rank 1 parabolic bundle on $(C,\mathbf{t})$, that is, $L$ is a line bundle with the full flag structure over each $t_i$ and $\boldsymbol{\beta}=(\beta_i)_{i=1}^n$ is a set of real numbers satisfying $0<\beta_i<1$ for each $i$. We define parabolic homomorphisms in the case of rank 1 and rank 2. As for the general case, see Definition 1.5 in \cite{MS}.
	\begin{definition}
		Let $(L,\boldsymbol{\beta})$ and $(M,\boldsymbol{\gamma})$ be rank 1 parabolic bundles, and $(E,l_*,\boldsymbol{\alpha})$ be a rank 2 parabolic bundle. A homomorphism $\rho \colon L \rightarrow M$ is said to be parabolic if $\rho(L|_{t_i})=0$ whenever $\beta_i>\gamma_i$, and strongly parabolic if $\rho(L|_{t_i})=0$ whenever $\beta_i\geq\gamma_i$. A homomorphism $\rho \colon L \rightarrow E$ is said to be parabolic if $\rho(L|_{t_i})\subset l^{(i)}_j$ whenever $\beta_i>\alpha^{(i)}_j$, and strongly parabolic if $\rho(L|_{t_i})\subset l^{(i)}_j$ whenever $\beta_i\geq\alpha^{(i)}_j$. A homomorphism $\rho \colon E \rightarrow M$ is said to be parabolic if $\rho(l^{
			(i)}_{j-1})=0$ whenever $\alpha^{(i)}_j>\gamma_i$, and strongly parabolic if $\rho(l^{
			(i)}_{j-1})=0$ whenever $\alpha^{(i)}_j\geq\gamma_i$. 
	\end{definition}
	Let $\mhom((L,\boldsymbol{\beta}), (M,\boldsymbol{\gamma}))$ and $\smhom((L,\boldsymbol{\beta}), (M,\boldsymbol{\gamma}))$ denote the subsheaves of $\mathcal{H}om(L,M)$ consisting of parabolic and strongly parabolic homomorphisms, respectively. Note that $\mhom((L,\boldsymbol{\beta}), (M,\boldsymbol{\gamma}))$ and $\smhom((L,\boldsymbol{\beta}), (M,\boldsymbol{\gamma}))$ depend on only magnitude relationships of $\beta_i$ and $\gamma_i$. 
	\begin{proposition}
		$\smhom((L,\boldsymbol{\beta}), (M,\boldsymbol{\gamma}))(D)$ is dual to $\mhom((M,\boldsymbol{\gamma}),(L,\boldsymbol{\beta}))$.
	\end{proposition}
	\begin{proof}
		Set $I=\{i \mid \beta_i \geq \gamma_i\}$ and $D_I=\sum_{i \in I} t_i$.
		Then we obtain
		\begin{align*}
			\smhom((L,\boldsymbol{\beta}), (M,\boldsymbol{\gamma}))(D)
			&=\mhom(L,M)(D-D_I) \\
			&\cong(\mhom(M,L)(-(D-D_I)))^\vee \\
			&=\mhom((M,\boldsymbol{\gamma}),(L,\boldsymbol{\beta}))^\vee.
		\end{align*}
	\end{proof}
	\begin{lemma}
		Suppose that $\deg L+\sum_{i=1}^n\beta_i > \deg M+\sum_{i=1}^n\gamma_i$. Then 
		\begin{equation*}
			\dim H^0(\mhom((L,\boldsymbol{\beta}),(M,\boldsymbol{\gamma})))=0
		\end{equation*}
	\end{lemma}
	\begin{proof}
		Set	
		\[
		I=\{i_1, \ldots, i_m\}:=\{i \mid \beta_i > \gamma_i\}.
		\]
		By definition, we have
		\[
		\mhom((L,\boldsymbol{\beta}), (M,\boldsymbol{\gamma}))=\mathcal{H}om(L,M)(-t_{i_1}-\cdots-t_{i_m}).
		\]
		and also obtain
		\[
		\deg L > \deg M + \sum_{i \in I} (\gamma_i-\beta_i)+\sum_{i \notin I} (\gamma_i-\beta_i)>\deg M -m.
		\]
		We therefore have $H^0(\mhom((L,\boldsymbol{\beta}),(M,\boldsymbol{\gamma})))=0$.
	\end{proof}

	\subsection{Extensions of parabolic connections}
	
	Given rank 1 parabolic bundles $(L,\boldsymbol{\beta}), (M,\boldsymbol{\gamma})$ over $(C,\mathbf{t})$, an extension of $(M,\boldsymbol{\gamma})$ by $(L,\boldsymbol{\beta})$ is a short exact sequence 
	\begin{equation}
		0 \longrightarrow (L,\boldsymbol{\beta}) \longrightarrow (E, l_*,\boldsymbol{\alpha}) \longrightarrow (M,\boldsymbol{\gamma}) \longrightarrow 0, 
	\end{equation}
	that is, (2) is a short exact sequence of underlying vector bundles, all homomorphisms are parabolic, and $\{\alpha^{(i)}_1,\alpha^{(i)}_2\}=\{\beta_i\} \sqcup \{\gamma_i\}$ for each $i$. Similarly, given rank 1 parabolic connections with a parabolic weight $(L,\nabla_L,\boldsymbol{\beta}), (M,\nabla_M,\boldsymbol{\gamma})$ over $(C,\mathbf{t})$, an extension of $(M,\nabla_M,\boldsymbol{\gamma})$ by $(L,\nabla_L,\boldsymbol{\beta})$ is a short exact sequence 
	\begin{equation}
		0 \longrightarrow (L,\nabla_L,\boldsymbol{\beta}) \longrightarrow (E,\nabla, l_*,\boldsymbol{\alpha}) \longrightarrow (M,\nabla_M,\boldsymbol{\gamma}) \longrightarrow 0, 
	\end{equation}
	that is, (3) is a short exact sequence of underlying parabolic bundles and the following diagram is commutative:
	\[
	\xymatrix@R=25pt@C=20pt{
		0 \ar[r] & L \ar[r] \ar[d]^{\nabla_L}& E \ar[r] \ar[d]^{\nabla}& M \ar[r] \ar[d]^{\nabla_M}& 0 \\
		0 \ar[r] & L\otimes\Omega_{C}^1(D) \ar[r] & E \otimes \Omega_{C}^1(D)\ar[r] & M \otimes \Omega_{C}^1(D)\ar[r] & 0 .
	}
	\]
	Set $\bm{L}=(L,\nabla_L,\boldsymbol{\beta})$ and $\bm{M}=(M,\nabla_M,\boldsymbol{\gamma})$. We define a complex $\textbf{Hom}(\bm{M},\bm{L})$ by 
	\[
	\mhom((M,\boldsymbol{\gamma}),(L,\boldsymbol{\beta})) \longrightarrow \smhom  ((M,\boldsymbol{\gamma}),(L,\boldsymbol{\beta})) \otimes \Omega_C^1(D),\ \ \ f \longmapsto \nabla_L \circ f - f \circ \nabla_M.
	\]
	As in the case of parabolic Higgs bundles (for example, see 3.4 in \cite{Th}), we obtain the following proposition for extensions of parabolic bundles. 
	\begin{proposition}
		The equivalence classes of extensions of $\bm{M}$ by $\bm{L}$ are classified by the hypercohomology group $\bm{H}^1(\textbf{Hom}(\bm{M},\bm{L}))$.
	\end{proposition}
	
	\subsection{Parabolic connections becoming unstable when a wall is crossed}
	
	We consider the space of parabolic weights $V=\{(\alpha^{(i)}_j) \in \mathbb{R}^{2n}\mid 0<\alpha^{(i)}_1<\alpha^{(i)}_2<1 \ \text{for each}\ i \}$. $V$ is separated by a hyperplane given by the equation
	\begin{equation}
		d-2d'+\sum_{i=1}^{n}(2n'_i-1)(\alpha^{(i)}_2-\alpha^{(i)}_1)=0,
	\end{equation}
	where $d$ and $d'$ are integers and $n'_i$ is 0 or 1 for each $i$. We call the intersection of $V$ and a hyperplane of the form (4) a wall. If $d$ is fixed, then the walls are finite in number. 
	
	\begin{proposition}
		If the data $(d,d',n'_i)$ and $(d,d'',n''_i)$ define the same wall  and $(d'',n''_i)\neq (d',n'_i)$, then $(d'',n''_i)= (d-d',1-n'_i)$.
	\end{proposition}
	
	\begin{proof}
		If two equations 
		\begin{align*}
			&d-2d'+\sum_{i=1}^{n}(2n'_i-1)(\alpha^{(i)}_2-\alpha^{(i)}_1)=0,\\
			&d-2d''+\sum_{i=1}^{n}(2n''_i-1)(\alpha^{(i)}_2-\alpha^{(i)}_1)=0
		\end{align*}
		define the same wall, then there is a real number $\lambda$ such that 
		\[
		d-2d''=\lambda(d-2d'), \ \ 2n''_i-1=\lambda(2n'_i-1).
		\]
		For each $i$, $2n'_i-1$ and $2n''_i-1$ take $1$ or $-1$, so $\lambda$ also takes $1$ or $-1$. If $\lambda=1$, then $d''=d'$ and $n''_i=n'_i$. If $\lambda=-1$, then $d''=d-d'$ and $n''_i=1-n'_i$.
	\end{proof}
	
	Take weights $\boldsymbol{\alpha},\boldsymbol{\beta} \in V$ separated by a wall $W$, that is, 
	\begin{align}
		d-2d'+\sum_{i=1}^{n}(2n'_i-1)(\alpha^{(i)}_2-\alpha^{(i)}_1)&>0,\\
		d-2d'+\sum_{i=1}^{n}(2n'_i-1)(\beta^{(i)}_2-\beta^{(i)}_1)&<0.
	\end{align}
	The latter inequality is equivalent to the inequality
	\begin{equation}
		d-2d''+\sum_{i=1}^{n}(2n''_i-1)(\beta^{(i)}_2-\beta^{(i)}_1)>0,
	\end{equation}
	where $d''=d-d'$ and $n''_i=1-n'_i$. Suppose that there is no wall other than $W$ which separates $\boldsymbol{\alpha}$ and $\boldsymbol{\beta}$.
	
	If a parabolic connection $(E,\nabla, l_*)$ with $\deg E=d$ is $\boldsymbol{\alpha}$-stable but $\boldsymbol{\beta}$-unstable, then there is a line subbundle $L$ of $E$ such that $\nabla(L) \subset L \otimes \Omega_C^1(D)$,
	\[
	\pardeg_{\boldsymbol{\alpha}}L<
	\frac{\pardeg_{\boldsymbol{\alpha}}E}{2}
	\]
	and 
	\[
	\pardeg_{\boldsymbol{\beta}}L>
	\frac{\pardeg_{\boldsymbol{\beta}}E}{2}.
	\]
	From (5), (6) and the assumption of the choice of $\boldsymbol{\alpha}$ and $\boldsymbol{\beta}$, we obtain $\deg L=d'$ and $\dim ((L|_{t_i}\cap l^{(i)}_0)/(L|_{t_i}\cap l^{(i)}_1))=n'_i$. We define $\epsilon(m)$ as 
	\[
	\epsilon(m)=\left\{
	\begin{array}{lll}
		+\quad m=0 \\
		- \quad m=1.
	\end{array}
	\right.
	\]
	Set $\nabla_L=\nabla|_L$ and $\nu'_i=\nu_i^{\epsilon (n'_i)}$, then $(L,\nabla_L)$ is a $\boldsymbol{\nu}'$-parabolic connection and we have 
	\[
	\sum_{i=1}^n \nu'_i=\sum_{i=1}^{n}\res_{t_i}(\nabla_L)=-\deg L.
	\]
	by Fuchs relation. Let $M=E/L$ and let $\nabla_M$ be the connection over $M$ induced by $\nabla$. Set $\nu''_i=\nu_i^{\epsilon (n''_i)}$, $\beta'_i=\beta^{(i)}_{2-n'_i}$ and $\beta''_i=\beta^{(i)}_{2-n''_i}$, then $(M,\nabla_M)$ is a $\boldsymbol{\nu}''$-parabolic connection and a sequence of parabolic connections 
	\begin{equation*}
		0 \longrightarrow (L,\nabla_L,\boldsymbol{\beta}') \longrightarrow (E,\nabla, l_*,\boldsymbol{\beta}) \longrightarrow (M,\nabla_M,\boldsymbol{\beta}'') \longrightarrow 0
	\end{equation*}
	is exact. Conversely, assume that $\sum_{i=1}^n \nu'_i$ is an integer. Let $(L,\nabla_L)$ be a $\boldsymbol{\nu}'$-parabolic connection  with degree $d'=\sum_{i=1}^n \nu'_i$ and $(M,\nabla_M)$ be a $\boldsymbol{\nu}''$-parabolic connection  with degree $d-d'$, then any extension of $(M,\nabla_M)$ by $(L,\nabla_L)$ is a  $\boldsymbol{\beta}$-unstable $\boldsymbol{\nu}$-parabolic connection with $d$. Moreover, if the extension is nonsplit, then it is $\boldsymbol{\alpha}$-stable. In fact, suppose that $(E,\nabla, l_*)$ is $\boldsymbol{\alpha}$-unstable and let $K \subset E$ be an $\boldsymbol{\alpha}$-destabilizing sub line bundle, then we have
	\[
	\pardeg_{\boldsymbol{\alpha}}K>
	\frac{\pardeg_{\boldsymbol{\alpha}}E}{2}.
	\]
	Set $m_i=\dim ((K|_{t_i}\cap l^{(i)}_0)/(K|_{t_i}\cap l^{(i)}_1))$,  $\alpha_{K,i}=\alpha^{(i)}_{2-m_i}$ and $\beta_{K,i}=\beta^{(i)}_{2-m_i}$. If the natural composite
	\[
	(K,\boldsymbol{\beta}_K) \longrightarrow (E, l_*,\boldsymbol{\beta}) \longrightarrow (M,\boldsymbol{\beta}'')
	\]
	is zero, then we get the injection $(K,\boldsymbol{\beta}_K) \rightarrow (L,\boldsymbol{\beta}')$. Since whether a map is parabolic depends on only magnitude relationships of parabolic weights, we obtain
	\[
	\mhom((K,\boldsymbol{\beta}_K), (L,\boldsymbol{\beta}'))
	=\mhom((K,\boldsymbol{\alpha}_K), (L,\boldsymbol{\alpha}'))=0.
	\]
	by Lemma 4.2. It is a contradiction, so the composite is not zero. If $(\deg K,(m_i)_i) \neq (d'',(n''_i)_i)$, then from (7) and the assumption of the choice of $\boldsymbol{\alpha}$ and $\boldsymbol{\beta}$ we have
	\[
	\pardeg_{\boldsymbol{\beta}}K>
	\frac{\pardeg_{\boldsymbol{\beta}}E}{2}>
	\pardeg_{\boldsymbol{\beta}}M
	\]
	and this means $\mhom((K,\boldsymbol{\beta}_K), (M,\boldsymbol{\beta}''))=0$ by Lemma 4.2. It is a contradiction. Since $(\deg K,(m_i)_i) = (d'',(n''_i)_i)$ and $\mhom  ((K,\boldsymbol{\beta}_K),(M,\boldsymbol{\beta}''))\neq 0$,  $(K,\boldsymbol{\beta}_K)$ is isomorphic to $(M,\boldsymbol{\beta}'')$. This implies that the extension is split.
	
	Let $L'\subset E$ be another $\boldsymbol{\beta}$-destabilizing sub line bundle. Then we have $\deg L'=d'$ and $\dim ((L'|_{t_i}\cap l^{(i)}_0)/(L'|_{t_i}\cap l^{(i)}_1))=n'_i$. Since 
	\[
	\pardeg_{\boldsymbol{\beta}}L'>
	\frac{\pardeg_{\boldsymbol{\beta}}E}{2}>
	\pardeg_{\boldsymbol{\beta}}M, 
	\]
	the composite
	\[
	(L',\boldsymbol{\beta}') \longrightarrow (E, l_*,\boldsymbol{\beta}) \longrightarrow (M,\boldsymbol{\beta}'')
	\]
	becomes zero. So $L'\subset L$, which implies $L'=L$ since $\deg L'=\deg L$. Hence a $\boldsymbol{\beta}$-destabilizing sub line bundle of $(E,\nabla, l_*)$ is unique.
	
	Putting together the above argument, we get the following proposition. 
	\begin{proposition}\label{pcextex}
		If there is an $\boldsymbol{\alpha}$-stable but $\boldsymbol{\beta}$-unstable $\boldsymbol{\nu}$-parabolic connection with degree $d$, then $\sum_{i=1}^n \nu'_i$ is an integer. When $\sum_{i=1}^n \nu'_i$ is an integer, a $\boldsymbol{\nu}$-parabolic connection $(E,\nabla, l_*)$ with degree $d$ is $\boldsymbol{\alpha}$-stable but $\boldsymbol{\beta}$-unstable if and only if $(E,\nabla, l_*, \boldsymbol{\beta})$ can be expressed as a nonsplit extension of parabolic connections 
		\[
		0 \longrightarrow (L,\nabla_L,\boldsymbol{\beta}') \longrightarrow (E,\nabla, l_*,\boldsymbol{\beta}) \longrightarrow (M,\nabla_M,\boldsymbol{\beta}'') \longrightarrow 0
		\]
		where $(L,\nabla_L)$ is a $\boldsymbol{\nu}'$-parabolic connection with degree $d'=\sum_{i=1}^n \nu'_i$ and $(M,\nabla_M)$ is a $\boldsymbol{\nu}''$-parabolic connection with degree $d-d'$. Moreover, if $(E,\nabla, l_*)$ is $\boldsymbol{\alpha}$-stable but $\boldsymbol{\beta}$-unstable, then $(E,\nabla, l_*)$ has a unique $\boldsymbol{\beta}$-destabilizing sub line bundle.
	\end{proposition}
	\begin{proposition}
		Suppose that $\sum_{i=1}^n \nu'_i$ is an integer, then the dimension of the locus of $\boldsymbol{\beta}$-unstable parabolic connections on $M^{\boldsymbol{\alpha}}_{(C,\mathbf{t})}(\boldsymbol{\nu},d)$ is equal to $6g-3+n$. Moreover, for a $\tr(\boldsymbol{\nu})$-parabolic connection $(K,\nabla_K)$, the dimension of the locus of $\boldsymbol{\beta}$-unstable parabolic connections on $M^{\boldsymbol{\alpha}}_{(C,\mathbf{t})}(\boldsymbol{\nu},(K,\nabla_K))$ is equal to $4g-3+n$.
	\end{proposition}
	\begin{proof}
		Let $(L,\nabla_L)$ be a $\boldsymbol{\nu}'$-parabolic connection with degree $d'=\sum_{i=1}^n \nu'_i$ and $(M,\nabla_M)$ be a $\boldsymbol{\nu}''$-parabolic connection with degree $d-d'$. Since $\beta'_i \neq \beta''_i$ for each $i$, we obtain 
		\[
		\mhom  ((M,\boldsymbol{\beta}''),(L,\boldsymbol{\beta}'))= \smhom  ((M,\boldsymbol{\beta}''),(L,\boldsymbol{\beta}')).
		\]
		Set
		\begin{align*}
			\mathcal{F}^0&=\mhom  ((M,\boldsymbol{\beta}''),(L,\boldsymbol{\beta}')), \\
			\mathcal{F}^1&=\smhom  ((M,\boldsymbol{\beta}''),(L,\boldsymbol{\beta}')) \otimes \Omega_C^1(D), \\
			\mathcal{F}^\bullet&=\mathbf{Hom}((M,\nabla_M,\boldsymbol{\beta}''),(L,\nabla_L,\boldsymbol{\beta}')), 
		\end{align*}
		then we get the long exact sequence
		\[
		0 \rightarrow \bm{H}^0(\mathcal{F}^\bullet)\rightarrow H^0(\mathcal{F}^0)\rightarrow H^0(\mathcal{F}^1)\rightarrow \bm{H}^1(\mathcal{F}^\bullet)\rightarrow H^1(\mathcal{F}^0)\rightarrow H^1(\mathcal{F}^1)\rightarrow \bm{H}^2(\mathcal{F}^\bullet)\rightarrow 0.
		\]
		By Lemma 4.2 and Serre duality, we have
		\begin{align*}
			H^0(\mathcal{F}^0)&=H^0(\mhom  ((M,\boldsymbol{\alpha}''),(L,\boldsymbol{\alpha}')))=0,\\
			H^1(\mathcal{F}^1) &\cong H^0(\mhom((L,\boldsymbol{\beta}'),(M,\boldsymbol{\beta}'')))^*=0
		\end{align*}
		where  $\beta'_i=\beta^{(i)}_{2-n'_i}$ and $\beta''_i=\beta^{(i)}_{2-n''_i}$.
		Hence we obtain $\bm{H}^0(\mathcal{F}^\bullet)=0$ and $\bm{H}^2(\mathcal{F}^\bullet)=0$.
		By Riemann-Roch theorem we  have
		\[
		\dim \bm{H}^1(\mathcal{F}^\bullet)
		= \dim H^0(\mathcal{F}^1) +\dim H^1(\mathcal{F}^0)
		=\deg \Omega_C^1(D)
		=2g-2+n.
		\]
		The dimensions of both moduli spaces of $\boldsymbol{\nu}'$-parabolic connections and $\boldsymbol{\nu}''$-parabolic connections are $2g$ and, $e, \lambda e \in \bm{H}^1(\mathcal{F}^\bullet)$ $(\lambda \in \mathbb{C}^*)$ give the same $\boldsymbol{\nu}$-parabolic connection. By Proposition \ref{pcextex}  the dimension of the locus of  $\boldsymbol{\beta}$-unstable parabolic connections on $M^{\boldsymbol{\alpha}}_{(C,\mathbf{t})}(\boldsymbol{\nu},d)$ is equal to $2g+2g+(2g-2+n)-1=6g-3+n$. In the fixed determinant case, $(L,\nabla_L),(M,\nabla_M)$ have to satisfy $L \otimes M = K, \nabla_L \otimes \nabla_M= \nabla_K$. So $(M,\nabla_M)$ is determined by $M = L^{-1} \otimes K$ and $\nabla_M= \nabla_L^* \otimes \nabla_K$ where $ \nabla_L^*$ is the dual connection of $\nabla_L$. Therefore the dimension of the locus of  $\boldsymbol{\beta}$-unstable parabolic connections on $M^{\boldsymbol{\alpha}}_{(C,\mathbf{t})}(\boldsymbol{\nu},(K,\nabla_K))$ is equal to $2g+(2g-2+n)-1=4g-3+n$. 
	\end{proof}
	\begin{corollary}
		When $g=0, n \geq 4$ or $g=1,n\geq 2$ or $g\geq 2, n \geq 1$, the birational type of $M^{\boldsymbol{\alpha}}_{(C,\mathbf{t})}(\boldsymbol{\nu},d)$ is independent of  the choice of $\boldsymbol{\alpha}$. Moreover, for a $\tr(\boldsymbol{\nu})$-parabolic connection $(K,\nabla_K)$, the birational type of $M^{\boldsymbol{\alpha}}_{(C,\mathbf{t})}(\boldsymbol{\nu},(K,\nabla_K))$ is also independent of the choice of $\boldsymbol{\alpha}$.
	\end{corollary}
	\begin{proof}
		By Theorem 3.5 and Proposition 4.6, the difference of dimensions between $M^{\boldsymbol{\alpha}}_{(C,\mathbf{t})}(\boldsymbol{\nu},d)$ and the locus of  $\boldsymbol{\beta}$-unstable parabolic connections is equal to $(8g+2n-6)-(6g-3+n)=2g+n-3>0$. So the birational type of $M^{\boldsymbol{\alpha}}_{(C,\mathbf{t})}(\boldsymbol{\nu},d)$ is independent of $\boldsymbol{\alpha}$. In the same way, the fixed determinant case follows.
	\end{proof}
	
	\section{The birational structure of moduli spaces of parabolic connections}
	
	In this section, we describe the birational structure of moduli spaces of rank 2 parabolic connections. 
	
	Throughout this section, we assume that $g \geq 1$ and $d=2g-1$. Let us fix a line bundle $L$ with degree $d$. Then we have $H^1(C, L)=\{0\}$ and by Riemann-Roch theorem, $\dim H^0(C, L)=d+1-g=g$. Let us fix a weight $\boldsymbol{\alpha}=\{\alpha^{(i)}_1, \alpha^{(i)}_2\}_{1 \leq i \leq n}$ and set $w_i=\alpha^{(i)}_2-\alpha^{(i)}_1$. 
	\subsection{The distinguished open subset of the moduli space of parabolic bundles}
	\begin{lemma}\label{lemstabeq}
		Assume that $\sum_{i=1}^{n}w_i <1$. For a quasi-parabolic bundle $(E, l_*)$ of rank 2 and odd degree, the following conditions are equivalent:
		\begin{itemize}
			\setlength{\itemsep}{0cm}
			\item[(i)] $(E, l_*)$ is $\boldsymbol{\alpha}$-semistable. 
			\item[(ii)] $(E, l_*)$ is $\boldsymbol{\alpha}$-stable. 
			\item[(iii)] $E$ is stable.
		\end{itemize}
	\end{lemma}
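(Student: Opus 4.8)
The plan is to prove the cycle of implications $\text{(iii)} \Rightarrow \text{(ii)} \Rightarrow \text{(i)} \Rightarrow \text{(iii)}$. The implication $\text{(ii)} \Rightarrow \text{(i)}$ is immediate from the definitions, so everything reduces to the other two, and both will follow from the numerical criterion of Lemma~2.4 combined with one arithmetic observation: since $\deg E = d = 2g-1$ is odd, for every sub-line-bundle $F \subsetneq E$ the integer $\deg E - 2\deg F$ is odd, hence is either $\geq 1$ or $\leq -1$, and in particular never $0$.

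Concretely, I would fix a sub-line-bundle $F \subsetneq E$ and consider the quantity appearing in Lemma~2.4,
\[
\Phi(F) := \deg E - 2\deg F + \sum_{F|_{t_i} \neq l^{(i)}_1} w_i - \sum_{F|_{t_i} = l^{(i)}_1} w_i,
\]
so that, by Lemma~2.4, $(E,l_*)$ is $\boldsymbol{\alpha}$-semistable (resp.\ $\boldsymbol{\alpha}$-stable) exactly when $\Phi(F) \geq 0$ (resp.\ $> 0$) for all such $F$. Each index $i$ contributes to precisely one of the two sums, so the parabolic correction term has absolute value at most $\sum_{i=1}^n w_i < 1$; thus $\Phi(F)$ differs from the odd integer $\deg E - 2\deg F$ by strictly less than $1$. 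For $\text{(iii)} \Rightarrow \text{(ii)}$: if $E$ is stable then $\deg F < \deg E/2$, i.e.\ $\deg E - 2\deg F \geq 1$, for every $F$, whence $\Phi(F) > 1 - 1 = 0$, so $(E,l_*)$ is $\boldsymbol{\alpha}$-stable. For $\text{(i)} \Rightarrow \text{(iii)}$ I would argue by contraposition: if $E$ is not stable then, $\deg E$ being odd, it is not even semistable, so some sub-line-bundle $F$ has $\deg E - 2\deg F \leq -1$, forcing $\Phi(F) < -1 + 1 = 0$ and hence $(E,l_*)$ not $\boldsymbol{\alpha}$-semistable. The three implications together give the claimed equivalence.

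I do not expect a genuine obstacle here: the heart of the argument is simply that a perturbation of total size $<1$ cannot move an odd integer across $0$. The only point deserving care is the role of the hypothesis $d = 2g-1$: it is the oddness of the degree that makes the unperturbed slope inequality strict, and if $d$ were even the equivalence would fail precisely at the walls $\deg E - 2\deg F = 0$, where non-stable but $\boldsymbol{\alpha}$-stable quasi-parabolic bundles occur.
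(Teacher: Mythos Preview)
Your proposal is correct and takes essentially the same approach as the paper: both arguments rest on Lemma~2.4, the bound $\bigl|\sum_{F|_{t_i}\neq l^{(i)}_1}w_i - \sum_{F|_{t_i}=l^{(i)}_1}w_i\bigr| < 1$, and the oddness of $\deg E - 2\deg F$. The only difference is organizational---the paper proves (i)$\Leftrightarrow$(ii) and then (ii)$\Leftrightarrow$(iii) separately rather than your cycle---but the content is identical.
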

	\begin{proof}
		If $(E, l_*)$ is $\boldsymbol{\alpha}$-semistable but not $\boldsymbol{\alpha}$-stable, then there is a sub line bundle $F \subset E$ such that
		\[
		\deg E -2 \deg F =\sum_{F|_{t_i}= l^{(i)}_1}w_i- \sum_{F|_{t_i}\neq  l^{(i)}_1}w_i.
		\]
		The left hand side is odd, but
		\begin{equation}\label{ll1}
			\left| \sum_{F|_{t_i}= l^{(i)}_1}w_i- \sum_{F|_{t_i}\neq  l^{(i)}_1}w_i\right|\leq\sum_{i=1}^n w_i<1.
		\end{equation}
		It is a contradiction. So conditions (i) and (ii) are equivalent.
		
		If $(E, l_*)$ is $\boldsymbol{\alpha}$-stable, then for all sub line bundle $F \subset E$, the inequality
		\begin{equation}\label{llstab}
			2 \deg F < \deg E + \sum_{F|_{t_i}\neq l^{(i)}_1}w_i- \sum_{F|_{t_i}= l^{(i)}_1}w_i
		\end{equation}
		holds.
		From (\ref{ll1}), it follows that
		\begin{equation*}
			\deg E -1<\deg E +\sum_{F|_{t_i}\neq l^{(i)}_1}w_i- \sum_{F|_{t_i}= l^{(i)}_1}w_i < \deg E +1,
		\end{equation*}
		and so we have $2\deg F \leq \deg E$ by (\ref{llstab}). Since $\deg E$ is odd, we obtain
		\[
		2\deg F \leq \deg E-1  < \deg E.
		\]
		Hence, $E$ is stable. Conversely, if  $E$ is stable, then we can prove that $(E, l_*)$ is $\boldsymbol{\alpha}$-stable by the above argument.
	\end{proof}
	\begin{lemma}\label{lemstab}
		Suppose that a vector bundle $E$ on $C$ satisfies the following conditions:
		\begin{itemize}
			\item[(i)] $E$ is an extension of $L$ by $\mathcal{O}_C$, that is, $E$ fits into an exact sequence
			\[
			0 \longrightarrow \mathcal{O}_C \longrightarrow E \longrightarrow L \longrightarrow 0.
			\]
			\item[(ii)] $\dim H^0(C,E)=1$.
		\end{itemize}
		Then $E$ is stable.
	\end{lemma}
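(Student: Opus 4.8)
The plan is to argue by contradiction, exploiting that $\deg E=d=2g-1$ is odd: for a rank $2$ bundle of odd degree there are no strictly semistable objects, and $E$ is stable exactly when every sub line bundle $F\subset E$ satisfies $\deg F\le g-1$. So I would suppose there is a sub line bundle $F\subset E$ with $\deg F\ge g$ and aim to contradict $\dim H^0(C,E)=1$.

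First I would analyse the composite $\varphi\colon F\hookrightarrow E\longrightarrow L$ coming from the given extension. If $\varphi=0$, then $F\subset E$ factors through $\mathcal{O}_C$, so $F$ is a sub line bundle of $\mathcal{O}_C$ and hence $\deg F\le 0$, contradicting $\deg F\ge g\ge 1$. Therefore $\varphi\ne 0$; a nonzero morphism of line bundles on the integral curve $C$ is injective as a sheaf map, so the induced map $H^0(\varphi)\colon H^0(C,F)\to H^0(C,L)$ is injective.

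Next I would produce a nonzero section of $F$ and play it against the cohomology of the extension. Since $\deg F\ge g$, Riemann--Roch gives $\dim H^0(C,F)\ge\deg F+1-g\ge 1$; pick $0\ne t\in H^0(C,F)$ and view it in $H^0(C,E)$ via $F\hookrightarrow E$. On the other hand, the long exact sequence of $0\to\mathcal{O}_C\to E\to L\to 0$ gives an injection $H^0(C,\mathcal{O}_C)\hookrightarrow H^0(C,E)$, which is an isomorphism because both spaces are one-dimensional (this is where hypothesis (ii) enters); consequently the next arrow $H^0(C,E)\to H^0(C,L)$ is the zero map. Hence the image of $t$ in $H^0(C,L)$, which is exactly $H^0(\varphi)(t)$, vanishes, and injectivity of $H^0(\varphi)$ forces $t=0$ — a contradiction. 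This shows no destabilizing $F$ exists, i.e.\ $E$ is stable.

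I do not expect a genuine obstacle here; the only points needing a little care are ruling out $\varphi=0$ (pure degree bookkeeping, using $g\ge 1$) and identifying $H^0(C,E)\to H^0(C,L)$ as the zero map from $h^0(E)=1$. A more cohomological variant would instead encode the vanishing of $H^0(C,E)\to H^0(C,L)$ as injectivity of cup product with the extension class $e\in\mathrm{Ext}^1(L,\mathcal{O}_C)=H^1(C,L^{-1})$ on $H^0(C,L)$, but the direct sub-line-bundle argument above is shorter and avoids invoking Serre duality.
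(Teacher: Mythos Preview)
Your proof is correct and follows essentially the same approach as the paper's: argue by contradiction, take a destabilizing sub line bundle $F$ with $\deg F\ge g$, use Riemann--Roch to get a nonzero section of $F$, and derive a contradiction from $\dim H^0(C,E)=1$ together with the extension structure. The only cosmetic difference is in the final step: the paper observes that uniqueness of the section of $E$ forces $\mathcal{O}_C\subset F$ as subsheaves of $E$, whence $F/\mathcal{O}_C$ is a torsion subsheaf of the torsion-free sheaf $L$ and so $F=\mathcal{O}_C$, contradicting $\deg F\ge g$; you instead argue that $H^0(F)\to H^0(L)$ is simultaneously injective (since $\varphi\colon F\to L$ is nonzero) and zero (since it factors through $H^0(E)\to H^0(L)=0$) --- these are minor rearrangements of the same idea.
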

	\begin{proof}
		If $E$ is not stable, then there exists a sub line bundle $F \subset E$ such that $\deg F \geq g$. Since $\dim H^0(C,F)-\dim H^1(C,F)=\deg F +1-g \geq 1$, we have $\dim H^0(C,F) \geq 1$, hence we have an inclusion $\mathcal{O}_C \hookrightarrow F$.
		By assumption (ii), we have a unique inclusion $\mathcal{O}_C \subset F \subset E$, and this inclusion induces the injection 
		$F/\mathcal{O}_C \hookrightarrow E/\mathcal{O}_C \simeq L$. Since $L$ is torsion free, one concludes that $F/\mathcal{O}_C =0$, that is, $F \simeq \mathcal{O}_C$. This contradicts the fact that $\deg F \geq g \geq 1$. 
	\end{proof}
	\begin{proposition}\label{extequiv}
		For an element $b \in H^1(C, L^{-1})$, let 
		\begin{equation}\label{parabext}
			0 \longrightarrow \mathcal{O}_C \longrightarrow E_b \longrightarrow L \longrightarrow 0
		\end{equation}
		be the exact sequence obtained by the extension of $L$ by $\mathcal{O}_C$ with the extension class $b$. Then $\dim H^0(C,E_b)=1$ if and only if the natural cup-product map
		\[
		\langle \ , b \rangle \colon H^0(C, L)\longrightarrow H^1(C,\mathcal{O}_C)
		\]
		is an isomorphism. Moreover, $\dim H^0(C,E_b)=1$ for a generic element $b\in H^1(C, L^{-1})$.
	\end{proposition}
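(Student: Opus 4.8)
The plan is to read off everything from the long exact cohomology sequence of the extension (5) together with the identification of its connecting map with the cup product. Applying $H^\bullet(C,-)$ to (5) yields
\[
0 \to H^0(C,\mathcal{O}_C) \to H^0(C,E_b) \to H^0(C,L) \xrightarrow{\ \delta_b\ } H^1(C,\mathcal{O}_C) \to H^1(C,E_b) \to H^1(C,L) \to 0,
\]
and by the standard description of extensions the boundary map $\delta_b$ is exactly the cup product $\langle\ ,b\rangle$ with $b\in H^1(C,L^{-1})=\mathrm{Ext}^1(L,\mathcal{O}_C)$. Since $h^0(C,\mathcal{O}_C)=1$, exactness gives $\dim H^0(C,E_b)=1+\dim\ker\delta_b$. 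As recalled at the start of this section, $h^0(C,L)=h^1(C,\mathcal{O}_C)=g$ and $h^1(C,L)=0$; since the source and target of $\delta_b$ then have the same dimension $g$, $\delta_b$ is injective if and only if it is an isomorphism. Hence $\dim H^0(C,E_b)=1 \iff \ker\delta_b=0 \iff \langle\ ,b\rangle$ is an isomorphism, which is the first claim.

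For the genericity statement I would argue that the bad locus $B:=\{\,b\in H^1(C,L^{-1}) : \langle\ ,b\rangle \text{ is not injective}\,\}$ is a proper closed subset. Indeed $B=\bigcup_{0\neq s\in H^0(C,L)}K_s$ with $K_s:=\{\,b : \langle s,b\rangle=0\,\}$, and since $K_s$ depends only on the line $\mathbb{C}s$ this union is parametrised by $\mathbb{P}H^0(C,L)$, of dimension $g-1$. Fix $0\neq s\in H^0(C,L)$ with zero divisor $Z_s$, of degree $d=2g-1$; the exact sequence $0\to L^{-1}\xrightarrow{\ s\ }\mathcal{O}_C\to\mathcal{O}_{Z_s}\to 0$ gives in cohomology a surjection $H^1(C,L^{-1})\xrightarrow{\ s\ }H^1(C,\mathcal{O}_C)$, because $H^1(C,\mathcal{O}_{Z_s})=0$ ($Z_s$ being finite). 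As $b\mapsto\langle s,b\rangle$ is precisely this map, $K_s$ is a linear subspace of codimension $g$ in $H^1(C,L^{-1})$.

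It follows that the incidence variety $I:=\{\,(\mathbb{C}s,b) : b\in K_s\,\}\subset\mathbb{P}H^0(C,L)\times H^1(C,L^{-1})$ has dimension at most $(g-1)+\big(\dim H^1(C,L^{-1})-g\big)=\dim H^1(C,L^{-1})-1$, so its image $B$ under the second projection is a proper closed subvariety of $H^1(C,L^{-1})$; a generic $b$ therefore lies outside $B$, and by the first part $\dim H^0(C,E_b)=1$ for such $b$. The only point requiring care is this final dimension count: one must check that the positive-dimensional family of proper subspaces $K_s$ does not cover all of $H^1(C,L^{-1})$, which is exactly what the bound on $\dim I$ ensures.
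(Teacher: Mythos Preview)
Your proof is correct and follows essentially the same route as the paper: the first claim via the long exact cohomology sequence of (5) with the connecting map identified as cup product by $b$, and the second via a dimension count on the incidence locus of pairs $(s,b)$ with $\langle s,b\rangle=0$. The only minor difference is in computing $\operatorname{codim} K_s$: the paper uses the Serre duality pairing $H^0(L\otimes\Omega_C^1)\times H^1(L^{-1})\to H^1(\Omega_C^1)$ and the $g$-dimensional subspace $s\cdot H^0(\Omega_C^1)$, whereas you obtain surjectivity of $b\mapsto\langle s,b\rangle$ directly from $H^1(\mathcal{O}_{Z_s})=0$.
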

	\begin{proof}
		Since $H^1(C, L)=\{0\}$, from the exact sequence (\ref{parabext}), we obtain the following exact sequence
		\[
		0 \longrightarrow H^0(C,\mathcal{O}_C) \longrightarrow H^0(C,E_b) \longrightarrow H^0(C, L) \overset{\langle \ ,b\rangle}{\longrightarrow} H^1(C,\mathcal{O}_C) \longrightarrow H^1(C,E_b) \longrightarrow 0
		\]
		Here we note that by definition of the extension with $b$ the connecting homomorphism $\delta \colon H^0(C, L) \rightarrow H^1(C,\mathcal{O}_C)$ is given by $\langle\ ,b \rangle$. Since $\dim H^0(C,E_b)=\dim H^1(C,E_b)+ \deg E_b+2(1-g) = \dim H^1(C,E_b)+1$, the first assertion follows from the above exact sequence.
		
		We show the second assertion. We set 
		\[
		Z:=\{(s,b) \in H^0(C, L) \times H^1(C, L^{-1}) \mid \langle s,b\rangle =0\}.
		\]
		Since $\deg L \otimes \Omega_C^1 = 4g-3 \geq 2g-1$, we have $H^1(C, L\otimes \Omega_C^1)=\{0\}$ and
		\[
		\dim H^1(C, L^{-1}) = \dim H^0(C, L \otimes \Omega_C^1)^* = \deg L \otimes \Omega_C^1 +1-g=3g-2.
		\]Hence, it is sufficient to show that $\dim Z=3g-2$. In fact, if $\dim Z=3g-2$, then for generic $b \in H^1(C,L^{-1})$, we have $\dim q^{-1}(b)=0$ and it means $q^{-1}(b)=\{(0,b)\}$. Here $q \colon Z \rightarrow H^1(C, L^{-1})$ is the projection. 
		
		Let $p \colon Z \rightarrow H^0(C, L)$ be the projection. We show that for any $s \in H^0(C, L)\setminus \{0\}$ , $\dim p^{-1}(s)=2g-2$. 
		
		A section $\sigma \in H^0(C,\Omega_C^1)$ induces the diagram
		\[
		\xymatrix@R=30pt@C=40pt{
			H^0(C, L) \times H^1(C, L^{-1}) \ar[r]^-{\langle\ ,\ \rangle }\ar[d]_{\otimes\sigma \times \id} &H^1(C,\mathcal{O}_C) \ar[d]^{\otimes\sigma} \\
			H^0(C, L \otimes \Omega_C^1) \times H^1(C, L^{-1}) \ar[r]^-{\langle\ ,\ \rangle'} &H^1(C,\Omega_C^1) 
		}
		\]
		where the above and below map are natural cup-products and the left and right map are natural maps induced by $\sigma$. Note that $\langle\ , \ \rangle'$ is nondegenerate. Set $s \in H^0(C, L)\setminus\{0\}$. For $b \in H^1(C, L^{-1})$, $\langle s,b \rangle=0$ if and only if for all $\sigma \in H^0(C,\Omega_C^1)$, $\langle s \otimes\sigma , b \rangle'=\langle s,b \rangle\otimes\sigma=0$.
		Since the set
		\[
		\{ s\otimes \sigma \mid \sigma \in H^0(C,\Omega_C^1) \} \simeq H^0(C,\Omega_C^1)
		\]
		is a $g$ dimensional subspace of $H^0(C, L \otimes \Omega_C^1)$ and by the nondegeneracy of $\langle\ ,\ \rangle'$, the set
		\[
		\{b \in H^1(C, L^{-1}) \mid \langle s,b \rangle=0 \}
		\]
		defines a $2g-2$ dimensional subspace of $H^1(C, L^{-1})$. We therefore obtain $\dim p^{-1}(s)=2g-2$. So we conclude $\dim Z=3g-2$.
	\end{proof}
	\begin{proposition}\label{V0}
		Let $\sum_{i=1}^{n}w_i < 1$. Let $V_0 \subset P^{\boldsymbol{\alpha}}(L)=P^{\boldsymbol{\alpha}}_{(C,\bm{t})}(L)$ be the subset which consists of all elements $(E, l_*) \in P^{\boldsymbol{\alpha}}(L)$ satisfying following conditions:
		\begin{itemize}
			\setlength{\itemsep}{0cm}
			\item[(i)] $E$ is an extension of $L$ by $\mathcal{O}_C$.
			\item[(ii)] $\dim H^0(C,E)=1$.
			\item[(iii)] For any $i$, $\mathcal{O}_C|_{t_i} \neq l^{(i)}_1$. Here $\mathcal{O}_C|_{t_i}$ is identified with the image by an injection 
			$\mathcal{O}_C|_{t_i} \hookrightarrow E|_{t_i}$.
		\end{itemize}
		Then $V_0$ is a nonempty Zariski open subset of $P^{\boldsymbol{\alpha}}(L)$.
	\end{proposition}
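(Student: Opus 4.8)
The plan is to prove nonemptiness and Zariski-openness of $V_0$ in $P^{\boldsymbol{\alpha}}(L)$ separately, the main inputs being Lemma 4.1, Lemma 4.2 and Proposition 4.3. For nonemptiness, note that $\dim H^1(C,L^{-1})=3g-2\geq 1$, so Proposition 4.3 provides a (generic) extension class $b\in H^1(C,L^{-1})$ with $\dim H^0(C,E_b)=1$; by Lemma 4.2 the bundle $E_b$ is stable, hence by Lemma 4.1 (here we use $\sum_i w_i<1$) the pair $(E_b,l_*)$ is $\boldsymbol{\alpha}$-stable for \emph{every} parabolic structure $l_*$. Since $\det E_b=\mathcal{O}_C\otimes L\cong L$, such a pair lies in $P^{\boldsymbol{\alpha}}(L)$ and satisfies (i) and (ii); and as $\dim H^0(C,E_b)=1$ the inclusion $\mathcal{O}_C\hookrightarrow E_b$ is canonical, so choosing for each $i$ the line $l^{(i)}_1\subset E_b|_{t_i}$ to be any of the infinitely many lines distinct from $\mathcal{O}_C|_{t_i}$ makes (iii) hold too. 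Thus $V_0\neq\emptyset$.

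For openness I would argue on an atlas of $P^{\boldsymbol{\alpha}}(L)$ carrying a universal parabolic bundle $(\mathcal{E},\mathcal{L}_*)$ on $C\times S$ with projection $\pi\colon C\times S\to S$; the three conditions are invariant, so openness on $S$ descends to openness on $P^{\boldsymbol{\alpha}}(L)$. Because $d=2g-1$ forces $\chi(E)=\deg E+2(1-g)=1$, every point has $\dim H^0(C,E)\geq 1$, so $\{\text{(ii)}\}=\{\dim H^0(C,E)\leq 1\}$ is open by upper semicontinuity. On this open set $U_2$ one has $R^1\pi_*\mathcal{E}=0$, hence $\pi_*\mathcal{E}$ is a line bundle whose formation commutes with base change, and the adjunction map $\pi^*\pi_*\mathcal{E}\to\mathcal{E}$ restricts on each fibre $C\times\{s\}$ to a generator of $H^0(C,\mathcal{E}_s)$. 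The locus in $C\times U_2$ where this morphism of sheaves vanishes is the zero scheme of a section of the rank-$2$ bundle $\mathcal{E}\otimes\pi^*(\pi_*\mathcal{E})^{-1}$, hence closed; its image in $U_2$ under the proper map $\pi$ is closed, and the complement $U_3$ is exactly the locus where the distinguished section of $E$ is nowhere vanishing. Within $U_2$ this is precisely the locus where (i) holds: a nowhere-vanishing section gives $0\to\mathcal{O}_C\to E\to Q\to 0$ with $Q$ a line bundle, forcing $Q\cong\det E\cong L$, while conversely an extension of $L$ by $\mathcal{O}_C$ furnishes such a section. So $U_3=\{\text{(i) and (ii)}\}$ is open.

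It remains to treat (iii). On $U_3$ we have the sub-line-bundle $\mathcal{S}:=\pi^*\pi_*\mathcal{E}\hookrightarrow\mathcal{E}$; restricting to $\{t_i\}\times U_3$ gives a line sub-bundle $\mathcal{S}_i$ of $\mathcal{E}_i:=\mathcal{E}|_{\{t_i\}\times U_3}$, and the universal parabolic structure gives a line sub-bundle $\mathcal{L}_i\subset\mathcal{E}_i$. Condition (iii) at $t_i$ is the non-vanishing of the induced section of the line bundle $\mathcal{S}_i^{-1}\otimes(\mathcal{E}_i/\mathcal{L}_i)$ on $U_3$, which is an open condition; intersecting over $i=1,\ldots,n$ shows that $V_0$ is open, and together with the nonemptiness above this proves the proposition. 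The step I expect to be the main obstacle is identifying condition (i) correctly: ``being an extension of $L$ by $\mathcal{O}_C$'' looks a priori like a jumping (non-open) condition, and the point is that over $\{\dim H^0(C,E)=1\}$ it is equivalent to the open condition that the distinguished section be nowhere vanishing — which itself relies on the numerical coincidence $\chi(E)=1$ that makes (ii) an open rather than a closed condition.
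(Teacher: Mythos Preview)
Your proof is correct and follows essentially the same approach as the paper: upper semicontinuity for (ii), nowhere-vanishing of the adjunction map $\pi^*\pi_*\mathcal{E}\to\mathcal{E}$ for (i), and comparison of two line sub-bundles in the fibre $\mathcal{E}|_{t_i}$ for (iii), with nonemptiness coming from Proposition~4.3 together with Lemmas~4.1 and~4.2. The only organizational difference is that the paper works over the moduli space $M_L$ of stable bundles (where the universal family exists globally since $\gcd(2,2g-1)=1$) and invokes Lemma~4.1 to identify $P^{\boldsymbol{\alpha}}(L)$ with the $(\mathbb{P}^1)^n$-bundle $\prod_{M_L}\mathbb{P}(\mathcal{E}|_{t_i\times M_L})$, whereas you work directly on an \'etale atlas of $P^{\boldsymbol{\alpha}}(L)$; both lead to the same openness argument.
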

	\begin{proof}
		Let $E$ be a vector bundle on $C$ satisfying conditions (i) and (ii). Then we have $\det E \simeq L$ from (i) and $E$ is stable by Lemma \ref{lemstab}.
		Let $M_L$ denote the moduli space of rank 2 stable vector bundles on $C$ with the determinant $L$.
		
		First, we show that the subset of $M_L$ consisting of vector bundles satisfying (i) and (ii) is open. Since rank and degree are coprime, $M_L$ has the universal family
		$\mathcal{E}$. Set
		\[
		V=\{x \in M_L\mid \dim H^0(C,\mathcal{E}|_{C \times x})=1\}, 
		\]
		then $V$ is an open subset of $M_L$ by the upper semicontinuity of dimensions. Let $q \colon C \times V \rightarrow V$ be the natural projection. By Corollary 12.9 in \cite{Ha}, $q_*\mathcal{E}$ is an invertible sheaf on $V$ and for any $x \in V$, $(q_*\mathcal{E})|_x$ is naturally isomorphic to 
		$H^0(C,\mathcal{E}|_{C \times x})$.  Hence $q^*q_*\mathcal{E}$ is an invertible sheaf on $C \times V$ and a natural homomorphism $ \iota \colon q^*q_*\mathcal{E} \rightarrow \mathcal{E}$ is injective. By definition, for any $x \in V$, we have $(q^*q_*\mathcal{E})|_{C \times x} \simeq H^0(C, \mathcal{E}|_{C \times x})\otimes_{\mathbb{C}}\mathcal{O}_{C \times x} \simeq\mathcal{O}_{C}$ and $\iota|_{c \times x} \colon \mathcal{O}_C \simeq (q^*q_*\mathcal{E})|_{C \times x}\rightarrow \mathcal{E}|_{C \times x}$ 
		is not zero. Set
		\[
		Y=\{(c,x) \in C \times V \mid \text{$\iota|_{(c,x)} \colon \mathcal{O}_C|_c \simeq (q^*q_*\mathcal{E})|_{(c \times x)}\rightarrow \mathcal{E}|_{(c,x)}$ is zero.}\}
		\]
		and $V'=V\setminus q(Y)$, then $Y$ is a closed subset of $C \times V$ and $V'$ is an open subset of $V$. 
		If $x \in V'$,
		then we obtain $\mathcal{E}|_{C \times x}/\mathcal{O}_C \simeq L$, that is, $\mathcal{E}|_{C \times x}$ is an extension of $L$ by $\mathcal{O}_C$. Therefore, $V'$ consists of all isomorphism classes of vector bundles satisfying the conditions (i) and (ii), and $V'$ is an open subset of $M_L$. Moreover, $V'$ is not empty by Proposition \ref{extequiv}.
		
		Second, we prove that $V_0$ is open. By Lemma \ref{lemstabeq}, we obtain
		\[
		P^{\boldsymbol{\alpha}}(L)\simeq \mathbb{P}(\mathcal{E}|_{t_1\times M_L})\times_{M_L} \mathbb{P}(\mathcal{E}|_{t_2\times M_L})\times_{M_L} \cdots \times_{M_L} \mathbb{P}(\mathcal{E}|_{t_n\times M_L}).
		\]
		For each $t_i$, by projectivization of $\iota|_{t_i \times V'} \colon (q^*q_*\mathcal{E})|_{t_i \times V'} \rightarrow \mathcal{E}|_{t_i \times V'}$, we obtain a morphism $\hat{l}^{(i)}_1 \colon V'\rightarrow \mathbb{P}(\mathcal{E}|_{t_i \times V'})$ such that for all $x \in V'$, $\hat{l}^{(i)}_1(x)$ is the point associated with the image by the immersion $\mathcal{O}_C \hookrightarrow \mathcal{E}|_{C \times x}$ at $t_i$. Let $\varpi \colon P^{\boldsymbol{\alpha}}(L) \rightarrow M_L$ be the natural forgetful map and $p_i \colon P^{\boldsymbol{\alpha}}(L) \rightarrow \mathbb{P}(\mathcal{E}|_{t_i \times M_L})$ be the natural projection. Set
		\[
		V_0=\varpi^{-1}(V')\setminus\bigcup^n_{i=1}p_i^{-1}(\hat{l}^{(i)}_1(V')).
		\]
		Then $V_0$ is an open subset of $P^{\boldsymbol{\alpha}}(L)$ and $V_0$ is the set of all isomorphism classes of parabolic bundles satisfying (i), (ii), and (iii).
	\end{proof}
	We introduce another expression of $V_0$. For $b\in H^1(C, L^{-1})$, let
	\[
	0 \longrightarrow \mathcal{O}_C \longrightarrow E_b \longrightarrow L \longrightarrow 0
	\]
	be the exact sequence associated with $b$. We set 
	\[
	U:=\{b \in H^1(C, L^{-1}) \mid \dim H^0(C,E_b)=1\}
	\]
	and then $U$ is an open subset and $0 \notin U$ by Proposition \ref{extequiv}.  
	
	The natural homomorphism $\psi \colon H^1(C, L^{-1}(-D)) \rightarrow H^1(C, L^{-1})$ induces the morphism
	\[
	\tilde{\psi} \colon \mathbb{P}H^1(C, L^{-1}(-D)) \setminus\,\mathbb{P}\Ker \psi \longrightarrow\mathbb{P}H^1(C, L^{-1}).
	\]
	Let $\tilde{U} \subset \mathbb{P}H^1(C, L^{-1})$ be the open subset associated with $U$ and $\tilde{V}=\tilde{\psi}^{-1}(\tilde{U})$. 
	
	Suppose that $(E, l_*) \in P^{\boldsymbol{\alpha}}(L)$ satisfies conditions (i), (ii), and (iii) of Proposition \ref{V0}. Let $b \in H^1(C, L^{-1})$ be the element associated with an exact sequence
	\[
	0 \longrightarrow \mathcal{O}_C \longrightarrow E \longrightarrow L \longrightarrow 0
	\]
	and $[b]$ be the point in $\mathbb{P}H^1(C, L^{-1})$ associated with the subspace generated by $b$. By assumption, we have $b \in U$. Let $\{U_i\}_i$ be an open covering of $C$ and $(c_{ij})_{i,j}, c_{ij}=
	c_i/c_j$ be transition functions of $L$ over $\{U_i\}_i$. Let $e^i_1$ be the restriction of a global section $\mathcal{O}_C \hookrightarrow E$ on $U_i$ and $e^i_2$ be a local section  of $E$ on $U_i$ whose image by the natural map $E \rightarrow E|_{t_i}$ generates $l^{(k)}_1$ at each $t_k \in U_i$. For generators $e^i_1$ and $e^i_2$, transition matrices $M_{i,j}$ is denoted by
	\[
	M_{ij}
	=
	\begin{pmatrix}
		1 & b'_{ij} \\
		0 & c_{ij}
	\end{pmatrix}
	\]
	where $b'=(b'_{ij}c_j)_{i,j} \in H^1(C, L^{-1}(-D))$. 
	Then we have $\tilde{\psi}([b'])=[b]$, and so $[b'] \in \tilde{V}$. By using the above argument, we can correspond $[b'] \in \tilde{V}$ to an isomorphism class of a parabolic bundle satisfying all conditions of 
	Proposition \ref{V0}. Thus we conclude $V_0 \simeq \tilde{V}$.
	
	Putting together the above argument, we get the following proposition. 
	\begin{proposition}
		Suppose that $\sum_{i=1}^{n}w_i < 1$. Let $V_0 \subset P^{\boldsymbol{\alpha}}(L)$ be the subset defined in Proposition \ref{V0}. Then there is an open immersion $V_0 \hookrightarrow \mathbb{P}H^1(C, L^{-1}(-D))$. 
	\end{proposition}
	\subsection{Apparent map}
	Let us fix $\boldsymbol{\nu} \in \mathcal{N}^{(n)}(d)$ and a $\tr(\boldsymbol{\nu})$-parabolic connection $\nabla_L$ over $L$.
	Let $V_0$ be the open subset of  $P^{\boldsymbol{\alpha}}(L)$ defined in Proposition \ref{V0}.
	We set
	\begin{align*}
		M^{\boldsymbol{\alpha}}(\boldsymbol{\nu},(L,\nabla_L))
		&:=M_{(C,\mathbf{t})}^{\boldsymbol{\alpha}}(\boldsymbol{\nu},(L,\nabla_L)), \\
		M^{\boldsymbol{\alpha}}(\boldsymbol{\nu},(L,\nabla_L))^0
		&:=\{(E,\nabla, l_*) \in M^{\boldsymbol{\alpha}}(\boldsymbol{\nu},(L,\nabla_L)) \mid (E, l_*) \in V_0\}.
	\end{align*}
	
	For each $(E,\nabla, l_*) \in M^{\boldsymbol{\alpha}}(\boldsymbol{\nu},(L,\nabla_L))^0$, $E$ has the unique sub line bundle which is isomorphic to the trivial line bundle. We define the section $\varphi_\nabla \in H^0(C, L \otimes \Omega_C^1(D))$ by the composite
	\[
	\mathcal{O}_C  \hookrightarrow E \xrightarrow{\nabla} E\otimes \Omega_C^1(D) \rightarrow E/\mathcal{O}_C \otimes \Omega_C^1(D) \simeq L \otimes \Omega_C^1(D).
	\]
	
	Suppose that $\varphi_\nabla=0$, i.e. $\nabla(\mathcal{O}_C) \subset \mathcal{O}_C \otimes \Omega_C^1(D)$. Then we obtain $\sum_{i=1}^{n}\nu^-_i = 0$ by Fuchs relation
	because $\mathcal{O}_C|_{t_i} \cap l^{(i)}_1=\{0\}$ for any $i$. So if $\sum_{i=1}^{n}\nu^-_i \neq 0$, then  $\varphi_\nabla\neq 0$ and 
	we therefore define the morphism
	\[
	\App \colon M^{\boldsymbol{\alpha}}(\boldsymbol{\nu},(L,\nabla_L))^0  \longrightarrow \mathbb{P}H^0(C, L\otimes \Omega_C^1(D)) \simeq |L\otimes \Omega_C^1(D)|.
	\]
	\[
	\hspace{-80pt}(E,\nabla, l_*) \longmapsto [\varphi_\nabla]
	\]
	Here $[\varphi_\nabla]$ is the point in  $\mathbb{P}H^0(C, L\otimes \Omega_C^1(D))$ associated with the subspace of $H^0(C, L\otimes \Omega_C^1(D))$ generated by $\varphi_\nabla$. We can extend this map to the rational map 
	\[
	\App \colon M^{\boldsymbol{\alpha}}(\boldsymbol{\nu},(L,\nabla_L)) \cdots \rightarrow |L\otimes \Omega_C^1(D)|.
	\]
	\subsection{Main theorem}
	Let 
	\[
	\Bun\colon M^{\boldsymbol{\alpha}}(\boldsymbol{\nu},(L,\nabla_L))^0  \longrightarrow V_0
	\]
	be the forgetful map which sends $(E,\nabla, l_*)$ to $(E, l_*)$. We can extend this map to the rational map 
	\[
	\Bun \colon M^{\boldsymbol{\alpha}}(\boldsymbol{\nu},(L,\nabla_L)) \cdots \rightarrow P^{\boldsymbol{\alpha}}(L).
	\]
	Let 
	\[
	\langle\ ,\  \rangle \colon H^0(C, L\otimes \Omega_C^1(D)) \times H^1(C, L^{-1}(-D)) \longrightarrow H^1(C,\Omega_C^1)
	\]
	be the natural cup-product. This cup-product is nondegenerate.
	\begin{theorem}\label{MT}
		Assume that $\sum_{i=1}^{n}\nu^-_i \neq 0$ and $\sum_{i=1}^nw_i < 1$. Let us define the subvariety $\Sigma \subset \mathbb{P}H^0(C, L\otimes \Omega_C^1(D)) \times \mathbb{P}H^1(C, L^{-1}(-D))$ by
		\[
		\Sigma=\{([s],[b])\mid \langle s , b \rangle=0\}.
		\]
		Then the map 
		\[
		\App\times\Bun \colon  M^{\boldsymbol{\alpha}}(\boldsymbol{\nu},(L,\nabla_L))^0 \longrightarrow (\mathbb{P}H^0(C, L \otimes \Omega_C^1(D)) \times V_0)\,\setminus\,\Sigma
		\]
		is an isomorphism. Therefore, the rational map 
		\[
		\App\times\Bun  \colon M^{\boldsymbol{\alpha}}(\boldsymbol{\nu},(L,\nabla_L))
		\ \cdots \rightarrow |L \otimes \Omega_C^1(D)| \times P^{\boldsymbol{\alpha}}(L)
		\]
		is birational. In particular, $M^{\boldsymbol{\alpha}}(\boldsymbol{\nu},(L,\nabla_L))$ is a rational variety.
	\end{theorem}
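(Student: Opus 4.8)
The plan is to construct an explicit inverse morphism to $\App\times\Bun$, working over the fibres of $\Bun$. Throughout I identify $V_0$ with the open subset $\tilde V\subset\mathbb{P}H^1(C,L^{-1}(-D))$ of the discussion following Proposition 4.4, so that a parabolic bundle $(E,l_*)\in V_0$ comes equipped with a class $[b]$, $b\in H^1(C,L^{-1}(-D))$, fitting into an exact sequence $0\to\mathcal{O}_C\to E\to L\to 0$ with $\mathcal{O}_C|_{t_i}\neq l^{(i)}_1$ for all $i$; by Proposition 4.3 one has $b\neq 0$, so by nondegeneracy of the cup-product the linear functional $\langle\,\cdot\,,b\rangle$ on $H^0(C,L\otimes\Omega_C^1(D))$ is nonzero. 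Fix $(E,l_*)\in V_0$. By Lemmas 4.1 and 4.2, $E$ is stable and $(E,l_*)$ is $\boldsymbol{\alpha}$-stable, hence any $\boldsymbol{\nu}$-parabolic connection on $(E,l_*)$ with determinant connection $(L,\nabla_L)$ is automatically $\boldsymbol{\alpha}$-stable (a parabolic connection is tested only against $\nabla$-invariant sub-line-bundles, a weaker condition than for the underlying parabolic bundle). Therefore the fibre of $\Bun$ over $(E,l_*)$ is exactly the set of such connections, and the problem becomes: classify them, and relate them to $\App$.

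The core step is the following existence-and-uniqueness statement, which I would prove by a Čech computation with respect to a cover $\{U_i\}$ and frames $(e^i_1,e^i_2)$ as before Proposition 4.4 (with $e^i_1$ spanning $\mathcal{O}_C$ and $e^i_2$ adapted to $l^{(\cdot)}_1$ along $D$), so that the transition matrices are upper triangular with diagonal $(1,c_{ij})$ and the connection is a collection $\Omega_i=\left(\begin{smallmatrix}a_i&\beta_i\\ s_i&d_i\end{smallmatrix}\right)$ of logarithmic $1$-forms subject to the gauge relation, the residue normalisation $\res_{t_k}\Omega_i=\left(\begin{smallmatrix}\nu^-_k&0\\ \ast&\nu^+_k\end{smallmatrix}\right)$, and $a_i+d_i=\omega_i$ where $\omega_i$ is the local form of $\nabla_L$. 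The $(2,1)$-entries $(s_i)$ glue to a global section of $L\otimes\Omega_C^1(D)$, which is precisely $\varphi_\nabla$; conversely, for a given $s\in H^0(C,L\otimes\Omega_C^1(D))$, one solves the gauge relation first for the diagonal entries $(d_i)$ (then $a_i=\omega_i-d_i$) and then for the off-diagonal entries $(\beta_i)$. A residue computation comparing the $(2,2)$-relation with a fixed logarithmic connection on an auxiliary line bundle of degree $-\sum\nu^+_i$, together with the Fuchs relation $\sum(\nu^+_i+\nu^-_i)=-\deg L$ (Lemma 3.2), shows that the obstruction to the first step lies in $H^1(C,\Omega_C^1)\cong\mathbb{C}$ and equals $\langle s,b\rangle-\sum_{i=1}^n\nu^-_i$; when it vanishes the solutions $(d_i)$ form a torsor over $H^0(C,\Omega_C^1)$. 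For the second step the obstruction lies in $H^1(C,L^{-1}\otimes\Omega_C^1)\cong H^0(C,L)^\ast$, and the induced map from the residual freedom $H^0(C,\Omega_C^1)$ in $(d_i)$ to this space is, up to a nonzero scalar, cup-product with $b$ — an isomorphism precisely because $(E,l_*)\in V_0$ (Proposition 4.3 and Serre duality). Since $H^0(C,L^{-1})=H^0(C,L^{-1}\otimes\Omega_C^1)=0$ for $d=2g-1$, both $(d_i)$ and $(\beta_i)$ are then pinned down. Conclusion: over $(E,l_*)\in V_0$, a $\boldsymbol{\nu}$-parabolic connection with determinant connection $(L,\nabla_L)$ and apparent section $s$ exists iff $\langle s,b\rangle=\sum_{i=1}^n\nu^-_i$, and is then unique up to isomorphism.

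Granting this, the hypothesis $\sum_{i=1}^n\nu^-_i\neq 0$ is decisive. Every $(E,\nabla,l_*)\in M^{\boldsymbol{\alpha}}(\boldsymbol{\nu},(L,\nabla_L))^0$ satisfies $\langle\varphi_\nabla,b\rangle=\sum\nu^-_i\neq 0$, so $\App\times\Bun$ does land in $(\mathbb{P}H^0(C,L\otimes\Omega_C^1(D))\times V_0)\setminus\Sigma$; conversely a point $([s],(E,l_*))$ off $\Sigma$ has a unique representative $s$ of the line $[s]$ with $\langle s,b\rangle=\sum\nu^-_i$, and the existence-uniqueness statement yields a unique preimage. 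Thus $\App\times\Bun$ is bijective onto $(\mathbb{P}H^0(C,L\otimes\Omega_C^1(D))\times V_0)\setminus\Sigma$, and in particular $M^{\boldsymbol{\alpha}}(\boldsymbol{\nu},(L,\nabla_L))^0\neq\emptyset$. To upgrade bijectivity to an isomorphism I would carry the construction of $\nabla$ out in families over the target: a universal parabolic bundle exists on $V_0$ because $\rank=2$ and $d=2g-1$ are coprime (as already used in Proposition 4.4), the cover and frames globalise, and the two-step solution is a relative cohomology construction with locally free cokernels by the vanishings above, so it produces a morphism inverse to $\App\times\Bun$. (Alternatively, $\App\times\Bun$ is a bijective morphism onto a smooth, hence normal, variety, and one may conclude by Zariski's Main Theorem once it is checked to be injective on tangent spaces.)

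The remaining assertions are formal. By the nonemptiness just established and Theorem 3.6, $M^{\boldsymbol{\alpha}}(\boldsymbol{\nu},(L,\nabla_L))$ is irreducible, and $M^{\boldsymbol{\alpha}}(\boldsymbol{\nu},(L,\nabla_L))^0$ is a nonempty open, hence dense, subset. On the other side $P^{\boldsymbol{\alpha}}(L)$ is irreducible (Theorem 2.6), $V_0$ is dense open in it (Proposition 4.4), and $\Sigma$ is a proper closed subset of $\mathbb{P}H^0(C,L\otimes\Omega_C^1(D))\times\mathbb{P}H^1(C,L^{-1}(-D))$, so $(\mathbb{P}H^0(C,L\otimes\Omega_C^1(D))\times V_0)\setminus\Sigma$ is dense open in $|L\otimes\Omega_C^1(D)|\times P^{\boldsymbol{\alpha}}(L)$; an isomorphism of dense opens is a birational map. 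Since $|L\otimes\Omega_C^1(D)|$ is a projective space and $P^{\boldsymbol{\alpha}}(L)$ is birational to $\tilde V\simeq V_0$, an open subset of the projective space $\mathbb{P}H^1(C,L^{-1}(-D))$, both factors are rational, hence so is $M^{\boldsymbol{\alpha}}(\boldsymbol{\nu},(L,\nabla_L))$. The main obstacle is the existence-and-uniqueness statement over a fixed $(E,l_*)\in V_0$ — controlling the two successive obstruction classes, the second being governed exactly by the defining property of $V_0$ and the first pinning down the constant $\sum\nu^-_i$ via the Fuchs relation — together with performing this construction in families so that the inverse is a morphism rather than just a bijection.
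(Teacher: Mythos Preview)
Your proof is correct and follows the same strategy as the paper: a \v{C}ech computation in adapted frames, solving the connection matrix entries in order, with the first obstruction living in $H^1(C,\Omega_C^1)$ and controlled by the residue theorem/Fuchs relation, and the second living in $H^1(C,L^{-1}\otimes\Omega_C^1)$ and killed via the cup-product isomorphism (the paper's Lemma~4.6, which is exactly your ``Proposition~4.3 plus Serre duality'' step). The only cosmetic difference is that the paper lets $\lambda$ float---it looks for the unique $\lambda$ and $\lambda\boldsymbol{\nu}$-parabolic $\lambda$-connection with $\varphi_\nabla=\gamma$, then characterises $\lambda=0$ as the locus $\Sigma$---whereas you fix $\lambda=1$ and normalise the representative $s$ of $[s]$ by $\langle s,b\rangle=\sum\nu^-_i$ instead; you are also more explicit than the paper about why the inverse is a morphism rather than merely a set-theoretic bijection.
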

	Before showing this theorem, we prove the following lemma.
	\begin{lemma}\label{lemMT}
		Let $(E, l_*) \in V_0$ and $b \in H^1(C, L^{-1})$ be an element associated with an extension
		\[
		0 \longrightarrow \mathcal{O}_C \longrightarrow E \longrightarrow L \longrightarrow 0.
		\]
		Then the natural cup-product map
		\[
		\langle\ ,b \rangle' \colon H^0(C,\Omega_C^1) \longrightarrow H^1(C, L^{-1}\otimes \Omega_C^1)
		\]
		is an isomorphism. In particular, for an element $b' \in H^1(C, L^{-1}(-D))$ associated with $(E, l_*)$, the composite of the natural cup-product map and the natural homomorphism
		\[
		H^0(C,\Omega_C^1) \xrightarrow{\langle\ ,b' \rangle''}H^1(C, L^{-1}(-D)\otimes \Omega_C^1) \longrightarrow H^1(C, L^{-1} \otimes \Omega_C^1)
		\]
		is also an isomorphism.
	\end{lemma}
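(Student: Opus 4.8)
The plan is to prove the first isomorphism by a dimension count combined with the long exact sequence attached to the dual of the defining extension, and then to deduce the second claim by functoriality of cup products.

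First I would record the relevant dimensions. Since $d=2g-1$, one has $\deg(L^{-1}\otimes\Omega_C^1)=-1$, so $H^0(C,L^{-1}\otimes\Omega_C^1)=0$, while Serre duality gives $H^1(C,L^{-1}\otimes\Omega_C^1)\cong H^0(C,L)^\ast$, which has dimension $g=\dim H^0(C,\Omega_C^1)$. Hence it suffices to prove that $\langle\ ,b \rangle'$ is injective.

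Next I would dualize the extension $0\to\mathcal{O}_C\to E\to L\to 0$ to obtain $0\to L^{-1}\to E^\vee\to\mathcal{O}_C\to 0$, whose extension class is again $b$ up to sign, and tensor by $\Omega_C^1$ to get $0\to L^{-1}\otimes\Omega_C^1\to E^\vee\otimes\Omega_C^1\to\Omega_C^1\to 0$. The connecting homomorphism $H^0(C,\Omega_C^1)\to H^1(C,L^{-1}\otimes\Omega_C^1)$ of the associated long exact sequence is exactly cup product with $b$, that is $\langle\ ,b \rangle'$. Because $H^0(C,L^{-1}\otimes\Omega_C^1)=0$, the exact segment $0\to H^0(C,E^\vee\otimes\Omega_C^1)\to H^0(C,\Omega_C^1)\to H^1(C,L^{-1}\otimes\Omega_C^1)$ shows that $\langle\ ,b \rangle'$ is injective if and only if $H^0(C,E^\vee\otimes\Omega_C^1)=0$. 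By Serre duality this last space is $H^1(C,E)^\ast$, so I am reduced to proving $H^1(C,E)=0$. This is where the hypothesis $(E,l_*)\in V_0$ enters: from $0\to\mathcal{O}_C\to E\to L\to 0$ and $H^1(C,L)=0$, the group $H^1(C,E)$ is the cokernel of $\langle\ ,b\rangle\colon H^0(C,L)\to H^1(C,\mathcal{O}_C)$, and since $\dim H^0(C,E)=1$ this map is an isomorphism by Proposition 4.3; hence $H^1(C,E)=0$. Combined with the dimension count, $\langle\ ,b \rangle'$ is an isomorphism.

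Finally, for the second statement, the composition in question is $\langle\ ,b' \rangle''$ followed by the map $H^1(C,L^{-1}(-D)\otimes\Omega_C^1)\to H^1(C,L^{-1}\otimes\Omega_C^1)$ induced by the inclusion $L^{-1}(-D)\hookrightarrow L^{-1}$; by naturality of the cup product this composition equals cup product with the image of $b'$ under $\psi\colon H^1(C,L^{-1}(-D))\to H^1(C,L^{-1})$, which is $b$ by the explicit cocycle description given just before the lemma. Hence the composition is $\langle\ ,b \rangle'$, which is an isomorphism by the first part. I expect the only delicate bookkeeping to be the identification of the connecting homomorphism with the cup product and the two Serre-duality identifications, both routine; the real content is the reduction, via the definition of $V_0$ and Proposition 4.3, to the vanishing $H^1(C,E)=0$.
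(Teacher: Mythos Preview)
Your argument is correct. Both you and the paper ultimately reduce to Proposition~4.3, but the routes differ. The paper observes directly that, under Serre duality $H^0(C,\Omega_C^1)\cong H^1(C,\mathcal{O}_C)^*$ and $H^1(C,L^{-1}\otimes\Omega_C^1)\cong H^0(C,L)^*$, the map $\langle\ ,b\rangle'$ is the transpose of the cup product $\langle\ ,b\rangle\colon H^0(C,L)\to H^1(C,\mathcal{O}_C)$, and then invokes Proposition~4.3 immediately. You instead realize $\langle\ ,b\rangle'$ as the connecting homomorphism for the dual extension tensored with $\Omega_C^1$, reduce injectivity to $H^0(C,E^\vee\otimes\Omega_C^1)=0$, Serre-dualize to $H^1(C,E)=0$, and then obtain that vanishing from the surjectivity of $\langle\ ,b\rangle$ given by Proposition~4.3. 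The paper's argument is a single duality step; yours is more explicit about where the hypothesis $(E,l_*)\in V_0$ is used and has the minor byproduct of recording $H^1(C,E)=0$, which is not stated elsewhere. For the second assertion, your naturality-of-cup-product argument is exactly the content of the paper's commutative diagram.
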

	\begin{proof}
		By Serre duality, we have $H^0(C,\Omega_C^1) \simeq H^1(C,\mathcal{O}_C)^*$ and $\ H^1(C, L^{-1} \otimes \Omega_C^1)\simeq H^0(C, L)^*$. So it suffices to prove 
		that the natural cup-product map
		\[
		\langle\ ,b \rangle''' \colon H^0(C, L) \longrightarrow H^1(C,\mathcal{O}_C)
		\]
		is an isomorphism, and it is nothing but the first assertion of Proposition 5.3.
		
		The second assertion follows from the following diagram.
		\[
		\xymatrix@R=30pt@C=40pt{
			H^0(C,\Omega_C^1) \times H^1(C, L^{-1}(-D)) \ar[r]^-{\langle\ ,\ \rangle'' }\ar[d]
			&H^1(C, L^{-1}(-D)\otimes\Omega_C^1) \ar[d]\\
			H^0(C,\Omega_C^1) \times H^1(C, L^{-1}) \ar[r]^-{\langle\ ,\ \rangle'} &H^1(C, L^{-1}\otimes\Omega_C^1) 
		}
		\]
	\end{proof}
	\begin{proof}
		(Proof of Theorem \ref{MT})
		
	Firstly, we show that for any $\gamma \in H^0(C, L \otimes \Omega_C^1 (D))$ and $b \in H^1(C, L^{-1}(-D))$ such that the quasi-parabolic bundle $(E, l_*)$ associated with $b$ is in $V_0$, there exist a unique complex number $\lambda$ and a unique $\lambda\boldsymbol{\nu}$-parabolic $\lambda$-connection $(E,\nabla, l_*)$ such that $\tr \nabla=\lambda\nabla_L$ and $\varphi_\nabla=\gamma$. 
		
		Let $\{U_i\}_i$ be an open covering of $C$ and $(c_{ij})_{i,j}, c_{ij}=
		c_i/c_j$ be transition functions of $L$ over $\{U_i\}_i$. Let $e^i_1$ be the restriction of a global section $\mathcal{O}_C \hookrightarrow E$ on $U_i$ and $e^i_2$ be a local section  of $E$ on $U_i$ whose image $\bar{e}^i_2$ by the natural map $E \rightarrow E|_{t_i}$ generates $l^{(k)}_1$ at each $t_k \in U_i$. For local generators $e^i_1$ and $e^i_2$, we can denote transition matrices of $E$ by
		\[
		M_{ij}
		=
		\begin{pmatrix}
			1 & b_{ij} \\
			0 & c_{ij}
		\end{pmatrix},
		\]
		where $b=(b_{ij}c_j)_{i,j} \in H^1(C, L^{-1}(-D))$ is the cocycle corresponding to  an extension
		\[
		0\longrightarrow \mathcal{O}_C\longrightarrow E\longrightarrow L\longrightarrow 0.
		\]
		
		A logarithmic $\lambda$-connection $\nabla$ is given in $U_i$ by $\lambda d + A_i$
		\[
		A_i=
		\begin{pmatrix}
			\alpha_i& \beta_i \\
			\gamma_i & \delta_i
		\end{pmatrix}
		\in M_2(\Omega_C^1(D)(U_i))
		\]
		with the compatibility condition
		\[
		\lambda dM_{ij}+A_iM_{ij}=M_{ij}A_j
		\]
		on each intersection $U_i \cap U_j$. By using elements of matrices, this condition is written by
		\begin{equation}\label{4eq}
			\left\{
			\begin{array}{ll}
				\dfrac{\gamma_i}{c_i}- \dfrac{\gamma_j}{c_j}=0 \\
				\alpha_i-\alpha_j=b_{ij}\gamma_j \\
				\delta_i-\delta_j=-b_{ij}\gamma_j-\lambda \dfrac{dc_{ij}}{c_{ij}} \\
				c_i\beta_i-c_j\beta_j=-(\lambda c_jdb_{ij}+(b_{ij}c_j)(\alpha_i-\delta_j)).
			\end{array}
			\right.
		\end{equation}
		If $(E,\nabla, l_*)$ is a $\lambda\boldsymbol{\nu}$-parabolic $\lambda$-connection, then for each point $t_i$, $\nabla$ satisfies the residual condition
		\begin{equation}\label{eigenA}
			\res_{t_k}(A_i)=
			\begin{pmatrix}
				\lambda \nu_k^- & 0 \\
				* & \lambda \nu_k^+
			\end{pmatrix}
		\end{equation}
		at each $t_k \in U_i$ because $\bar{e}^i_2$ generates $l^{(k)}_1$.
		$\nabla_L$ is denoted in $U_i$ by $d+\omega_i$ with the compatibility condition
		\begin{equation}\label{connL}
			dc_{ij}+c_{ij}\omega_i=c_{ij}\omega_j
		\end{equation}
		on each $U_i \cap U_j$. If $\tr \nabla = \lambda\nabla_L$, then the equation
		\begin{equation}\label{trcond}
			\alpha_i+\delta_i=\lambda\omega_i
		\end{equation}
		holds. When $\nabla$ is denoted in $U_i$ by $\lambda d + A_i$, we have $\varphi_\nabla=(\gamma_i/c_i)_i\in H^0(C, L \otimes \Omega_C^1(D))$. So if $\varphi_\nabla=\gamma$, then we have
		\begin{equation}\label{gamma}
			(\gamma_i/c_i)_i=\gamma.
		\end{equation}
		We show that there exist $\lambda \in \mathbb{C}$ and $\alpha_i,\beta_i,\gamma_i,\delta_i \in \Omega_C^1(D)(U_i)$ satisfying the conditions (\ref{4eq}), (\ref{eigenA}), (\ref{trcond}) and (\ref{gamma}) uniquely.
		
		Step 1: we find $\gamma_i$. From (\ref{gamma}), we have to set $\gamma_i=c_i\gamma$.
		
		Step 2: we find $\alpha_i$. Fix a section $\alpha^0_i \in \Omega_C^1(D)(U_i)$ which has the residue data $\res_{t_k}(\alpha^0_i)=\nu^-_k$ at each $t_k \in U_i$. The cocycle $(\alpha^0_i-\alpha^0_j)_{i,j}$ defines an element of $H^1(C,\Omega_C^1)$. If $(\alpha^0_i-\alpha^0_j)_{i,j}$ is zero in $H^1(C,\Omega_C^1)$, then there exist sections 
		$\tilde{\alpha}_i \in \Omega_C^1(U_i)$ on each $i$ such that $\alpha^0_i-\alpha^0_j=\tilde{\alpha}_i-\tilde{\alpha}_j$ for any $i,j$. $(\alpha^0_i-\tilde{\alpha_i})_i$ defines a global logarithmic 1-form whose sum of residues $\sum_{i=1}^n \nu^-_i$ is not zero. This contradicts the residue theorem. Therefore, the cocycle $(\alpha^0_i-\alpha^0_j)_{i,j}$ is a generator of $H^1(C,\Omega_C^1)$ 
		and there is a unique complex number $\lambda$ such that $\lambda(\alpha^0_i-\alpha^0_j)_{i,j}=(b_{ij}\gamma_j)_{i,j}$. Let $\tilde{\alpha}_i \in \Omega_C^1(U_i)$ be a section such that 
		\[
		\tilde{\alpha}_i-\tilde{\alpha}_j=b_{ij}\gamma_j-\lambda(\alpha^0_i-\alpha^0_j)
		\]
		for any $i,j$. Set $\alpha_i=\lambda \alpha^0_i+\tilde{\alpha}_i$, then $(\alpha_i)_i$ is a solution of the second equation of (\ref{4eq}) and has the residue data $\res_{t_k}(\alpha_i)=\lambda\nu^-_k$. Note that $(\alpha_i)_i$ is still not uniquely determined. Actually, the difference of two solutions of 
		the second equation of (\ref{4eq}) having the same residue data defines a global 1-form and now $\dim H^0(C,\Omega_C^1) \geq g \geq 1$.
		
		Step 3: we find $\delta_i$. From (\ref{trcond}), we have to set $\delta_i=\lambda\omega_i-\alpha_i$. It is clear that $(\delta_i)_i$ is a solution of the third equation of (\ref{4eq}) and has the residue data $\res_{t_k}(\delta_i)=\lambda\nu^+_k$. $\delta_i$ is uniquely determined by $\alpha_i$.
		
		Step 4: we find $\beta_i$ and show that $\alpha_i$ is uniquely determined. From the cocycle condition of $(b_{ij}c_j)_{i,j}$ and the first, second, and third equations of (\ref{4eq}), we obtain
		\begin{align*}
			&(\lambda c_jdb_{ij}+(b_{ij}c_j)(\alpha_i-\delta_j))+(\lambda c_kdb_{jk}+(b_{jk}c_k)(\alpha_j-\delta_k))\\
			=&-\lambda b_{ij}c_{k}dc_{jk}+\lambda c_kdb_{ik}+(b_{ik}c_{k}-b_{jk}c_{k})\alpha_i-b_{ij}c_{j}\delta_j+(b_{jk}c_k)(\alpha_j-\delta_k)\\
			=&-\lambda b_{ij}c_{k}dc_{jk}+\lambda c_kdb_{ik}+b_{ik}c_k(\alpha_i-\delta_k)-b_{jk}c_k(\alpha_i-\alpha_j)-b_{ij}c_j(\delta_j-\delta_k)\\
			=&\lambda c_kdb_{ik}+b_{ik}c_k(\alpha_i-\delta_k).
		\end{align*}
		 So $(-(\lambda c_jdb_{ij}+(b_{ij}c_j)(\alpha_i-\delta_j)))_{i,j}$ defines a cocycle of $H^1(C, L^{-1}\otimes \Omega_C^1)$. Note that a solution of the fourth equation of (\ref{4eq}) exists if and only if $(-(\lambda c_jdb_{ij}+(b_{ij}c_j)(\alpha_i-\delta_j)))_{i,j} $ is trivial. We denote the image of $b$ by the natural homomorphism $H^1(C, L^{-1}(-D))\rightarrow H^1(C, L^{-1})$ by the same character $b$. Since the linear map $\langle\ , b\rangle'' \colon H^0(C,\Omega_C^1)\rightarrow H^1(C, L^{-1}\otimes \Omega_C^1)$ is an isomorphism by Lemma \ref{lemMT}, there exists a unique global 1-form $\zeta=(\zeta_i/c_i)_i \in H^0(C,\Omega_C^1)$ such that
		\[
		(2b_{ij}\zeta_j)_{i,j}=\langle2\zeta ,b\rangle''=-(\lambda c_jdb_{ij}+(b_{ij}c_j)(\alpha_i-\delta_j))_{i,j}, 
		\]
		that is, 
		\[
		-(\lambda c_jdb_{ij}+(b_{ij}c_j)((\alpha_i+\zeta_i/c_i)-(\delta_j-\zeta_j/c_j)))_{i,j}=0
		\]
		in $H^1(C, L^{-1}\otimes \Omega_C^1)$. 
		So there exist unique $(\alpha_i)_i$ and $(\delta_i)_i$ satisfying the condition (\ref{trcond}) and 
		\[
		-(\lambda c_jdb_{ij}+(b_{ij}c_j)(\alpha_i-\delta_j))_{i,j}=0,
		\]
		and there exists  a solution of the fourth equation $(\beta_i)_i$ of (\ref{4eq}) such that $\res_{t_k}(\beta_i)=0$ for any $i$ and $t_k \in U_i$.  Since 
		$H^0(C, L^{-1}\otimes \Omega_C^1)\simeq H^1(C, L)^* = \{0\}$, $(\beta_i)_i$ is uniquely determined. 
		
		When $\lambda=0$, the cocycle $(b_{ij}\gamma_j)_{i,j}$ is zero because $\alpha_i \in \Omega_C^1(U_i)$. Conversely, assume that $(b_{ij}\gamma_j)_{i,j}=0$.
		Then there exists $\tilde{\alpha}_i \in \Omega_C^1(U_i)$ for each $i$ such that $\alpha_i-\alpha_j=b_{ij}\gamma_j=\tilde{\alpha}_i-\tilde{\alpha}_j$. The cocycle $(\alpha_i - \tilde{\alpha}_i)_i$ defines a global logarithmic 1-form on $C$. By the residue theorem, we have
		\[
		\sum_{i=1}^n\lambda \nu^-_i=0.
		\]
		By assumption, we obtain $\lambda=0$. 
		
		For a point $([\gamma], [b])\in (\mathbb{P}H^0(C, L \otimes \Omega_C^1(D)) \times V_0)\,\setminus\,\Sigma$, there exist a unique complex number $\lambda$ and a unique $\lambda\boldsymbol{\nu}$-parabolic $\lambda$-connection $(E,\nabla, l_*)$ such that $\tr \nabla=\lambda\nabla_L$, $\varphi_\nabla=\gamma$, and $(E,l_*)$ is the quasi-parabolic bundle corresponding to $b$. Then $\lambda\neq 0$ and $(E, \lambda^{-1}\nabla, l_*)$ is a $\boldsymbol{\nu}$-parabolic connection with the determinant $(L,\nabla_L)$ whose image by $\App\times\Bun$ is $([\gamma], [b])$. If a $\boldsymbol{\nu}$-parabolic connection $(E,\nabla',l_*)$ satisfies $\tr \nabla'=\nabla_L$ and  $\varphi_{\nabla'}\in [\gamma]$, then there is a unique complex number $\mu$ such that $\varphi_{\nabla'}=\mu \lambda^{-1}\gamma$. A $\mu\boldsymbol{\nu}$-parabolic $\mu$-connection $(E,\mu\lambda^{-1}\nabla,l_*)$ satisfies $\tr (\mu\lambda^{-1}\nabla)=\mu\nabla_L$ and  $\varphi_{\mu\lambda^{-1}\nabla}=\mu \lambda^{-1}\gamma$, so we have $\mu=1$ and $\nabla'=\lambda^{-1}\nabla$ by the uniqueness.
		Therefore, the morphism
		\[
		\App\times\Bun \colon  M^{\boldsymbol{\alpha}}(\boldsymbol{\nu},(L,\nabla_L))^0 \longrightarrow (\mathbb{P}H^0(C, L \otimes \Omega_C^1(D)) \times V_0)\,\setminus\,\Sigma
		\]
		is bijective. By Zariski's main theorem (for example, see Chapter 3, \S 9, Proposition 1 in \cite{Mu}), $\App \times \Bun$ is an isomorphism.
	\end{proof}
	The following proposition is the same as Proposition 4.6 in \cite{LS} and follows by using the same argument of the proof.
	\begin{proposition}
		Suppose that $\sum_{i=1}^n\nu^-_i=0$. Then $M^{\boldsymbol{\alpha}}(\boldsymbol{\nu},(L,\nabla_L))^0$ is isomorphic to the total space of the cotangent bundle $T^*V_0$ and the map $\Bun \colon M^{\boldsymbol{\alpha}}(\boldsymbol{\nu},(L,\nabla_L))^0 \rightarrow V_0$ corresponds to the natural projection $T^*V_0 \rightarrow V_0$. Moreover, the section $\nabla_0 \colon V_0 \rightarrow M^{\boldsymbol{\alpha}}(\boldsymbol{\nu},(L,\nabla_L))^0$ corresponding to the zero section $V_0 \rightarrow T^*V_0$ is given by those reducible connections preserving the destabilizing subbundle $\mathcal{O}_C$.
	\end{proposition}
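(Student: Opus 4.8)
The plan is to follow the argument of Proposition 4.6 in \cite{LS}. For each $(E,l_*)\in V_0$ I would first exhibit a canonical \emph{reducible} $\boldsymbol{\nu}$-parabolic connection $\nabla_0$ on $(E,l_*)$ which preserves the destabilizing subbundle $\mathcal{O}_C\subset E$ — equivalently, which has $\varphi_{\nabla_0}=0$ — and this will become the zero section. Then I would identify the set of \emph{all} $\boldsymbol{\nu}$-parabolic connections on $(E,l_*)$ with trace $\nabla_L$ with the cotangent space $T^*_{(E,l_*)}V_0$, by Serre duality, using $\nabla_0$ as the origin, and check that everything globalizes over $V_0$.

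For the construction of $\nabla_0$: fix $(E,l_*)\in V_0$. By condition (ii) of Proposition 4.4 there is a unique inclusion $\mathcal{O}_C\hookrightarrow E$, and by (iii) we have $\mathcal{O}_C|_{t_i}\cap l^{(i)}_1=\{0\}$ for every $i$. I would run the cocycle argument in the proof of Theorem 4.5 with $\lambda=1$ and $\gamma=0$. Then $\gamma_i=0$ and the second equation of $(6)$ becomes $\alpha_i-\alpha_j=0$, so the $\alpha_i$ glue to a global section $\eta$ of $\Omega_C^1(D)$ with $\res_{t_i}\eta=\nu^-_i$; such an $\eta$ exists precisely because the obstruction in $H^1(C,\Omega_C^1)\cong\mathbb{C}$ — the class of the cocycle $(\alpha^0_i-\alpha^0_j)$, equal to a nonzero multiple of $\sum_i\nu^-_i$ by the residue theorem — now vanishes. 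This is the only point where $\sum_i\nu^-_i=0$ is used, and it is exactly where the argument of Theorem 4.5 was forced to take $\lambda=0$. Setting $\delta_i=\omega_i-\eta|_{U_i}$ (so $(9)$ holds) and solving $(8)$ for $\beta_i$ — possible, and giving a unique $\beta_i$, by Lemma 4.6 and because $H^0(C,L^{-1}\otimes\Omega_C^1)\cong H^1(C,L)^*=\{0\}$ — yields $\nabla_0$; as in Steps 2–4 of the proof of Theorem 4.5 the remaining freedom in $\eta$ is cancelled by the corresponding change in the correction $\zeta$ supplied by Lemma 4.6, so $\nabla_0$ is unique. The residue normalizations show $(E,\nabla_0,l_*)$ is a $\boldsymbol{\nu}$-parabolic connection with trace $\nabla_L$, and it is $\boldsymbol{\alpha}$-stable because $(E,l_*)\in V_0$ is already an $\boldsymbol{\alpha}$-stable parabolic bundle, so \emph{every} sub line bundle of $E$ — in particular every $\nabla_0$-invariant one — has parabolic degree $<\frac{\pardeg_{\boldsymbol{\alpha}}E}{2}$. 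Performing the construction over the universal family on $V_0$ (as in the proof of Proposition 4.4) makes $(E,l_*)\mapsto(E,\nabla_0,l_*)$ an algebraic section $\nabla_0\colon V_0\to M^{\boldsymbol{\alpha}}(\boldsymbol{\nu},(L,\nabla_L))^0$, whose image is by construction the set of reducible connections preserving $\mathcal{O}_C$.

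For the fibers: fix $(E,l_*)\in V_0$ and let $\mathcal{G}$ be the sheaf of trace-free strongly parabolic endomorphisms of $(E,l_*)$ and $\mathcal{G}_{\mathrm{par}}$ the sheaf of trace-free parabolic endomorphisms. If $\nabla,\nabla'$ are $\boldsymbol{\nu}$-parabolic connections on $(E,l_*)$ with $\tr\nabla=\tr\nabla'=\nabla_L$, then $\nabla-\nabla'\in H^0(C,\mathcal{G}\otimes\Omega_C^1(D))$: it is $\mathcal{O}_C$-linear, trace-free since the traces agree, and strongly parabolic since $\res_{t_i}\nabla$ and $\res_{t_i}\nabla'$ both act by $\nu^+_i$ on $l^{(i)}_1$ and by $\nu^-_i$ on $E|_{t_i}/l^{(i)}_1$. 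Conversely, adding any element of $H^0(C,\mathcal{G}\otimes\Omega_C^1(D))$ to such a connection gives another one, still $\boldsymbol{\alpha}$-stable by the argument of the previous paragraph; hence $\Bun^{-1}((E,l_*))$ is an affine space under $H^0(C,\mathcal{G}\otimes\Omega_C^1(D))$, and $\nabla_0$ provides a distinguished point of it. By the standard duality $\mathcal{G}_{\mathrm{par}}^{\vee}\cong\mathcal{G}\otimes\mathcal{O}_C(D)$ between parabolic and strongly parabolic endomorphisms together with Serre duality on $C$, one has $H^0(C,\mathcal{G}\otimes\Omega_C^1(D))\cong H^1(C,\mathcal{G}_{\mathrm{par}})^*\cong T^*_{(E,l_*)}P^{\boldsymbol{\alpha}}(L)=T^*_{(E,l_*)}V_0$ (using Theorem 2.6 and that $V_0$ lies in the smooth stable locus). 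Using $\nabla_0$ as origin therefore trivializes the affine bundle $\Bun\colon M^{\boldsymbol{\alpha}}(\boldsymbol{\nu},(L,\nabla_L))^0\to V_0$ into $T^*V_0\to V_0$, with $\nabla_0$ going to the zero section and $\Bun$ to the bundle projection; the last clause of the statement is then just the description of $\nabla_0$ obtained above.

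The main obstacle is not conceptual but bookkeeping: one must carry out the Step 1–Step 4 construction of $\nabla_0$, the affine-bundle structure on $M^{\boldsymbol{\alpha}}(\boldsymbol{\nu},(L,\nabla_L))^0$, and the Serre-duality isomorphism $H^0(C,\mathcal{G}\otimes\Omega_C^1(D))\cong T^*V_0$ all relatively over $V_0$, and verify that the resulting torsor matches the canonical structure on the cotangent bundle (as in \cite{LS}, this can be sharpened to an isomorphism of symplectic varieties). No genuinely new idea beyond the proof of Theorem 4.5 is needed; the delicate part is simply checking that the constructions are algebraic and compatible in families.
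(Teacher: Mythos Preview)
Your proposal is correct and follows exactly the approach the paper indicates: the paper gives no proof of its own for this proposition but simply states that it ``is the same as Proposition 4.6 in \cite{LS} and follows by using the same argument,'' which is precisely the route you outline (construct the canonical reducible $\nabla_0$ by running the cocycle computation of Theorem~4.5 with $\gamma=0$, $\lambda=1$, then identify $\Bun^{-1}((E,l_*))$ with $H^0(C,\mathcal{E}^1)\cong H^1(C,\mathcal{E}^0)^*\cong T^*_{(E,l_*)}V_0$ via Serre duality). One small slip: where you write ``solving $(8)$ for $\beta_i$'' you mean the fourth equation of $(6)$, since $(8)$ is the compatibility condition for $\nabla_L$ and does not involve $\beta_i$.
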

	\subsection{Another proof of Theorem \ref{MT}}
	We will show $\App \times \Bun$ is a birational map in another way. First, we show the existence of a parabolic connection over a given parabolic bundle. The following lemma is an analogy of Lemma 2.5 in \cite{FL}.
	\begin{lemma}\label{lemexconne}
		Suppose that $\sum_{i=1}^nw_i < 1$. Then for each $(E, l_*) \in P^{\boldsymbol{\alpha}}(L)$, there is a $\boldsymbol{\nu}$-parabolic connection $(E,\nabla, l_*)$ such that $\tr \nabla \simeq \nabla_L$.
	\end{lemma}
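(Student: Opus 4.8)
The plan is to exhibit the set of all $\boldsymbol{\nu}$-parabolic connections on the fixed $(E,l_*)$ with trace $\nabla_L$ as a torsor over a cohomology group and to kill the resulting obstruction by a vanishing theorem; I will treat the case in which $(E,l_*)$ is $\boldsymbol{\alpha}$-stable, which is the essential one. Introduce the coherent subsheaf $\mathcal{G}\subset\End(E)\otimes\Omega_C^1(D)$ whose sections are the trace-free $\End(E)$-valued logarithmic $1$-forms $\Phi$ such that, for every $i$, the residue $\res_{t_i}(\Phi)$ sends $E|_{t_i}$ into $l^{(i)}_1$ and annihilates $l^{(i)}_1$ (such a residue is automatically nilpotent). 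Using local frames adapted to the flags and the residue normal forms already exploited in the proof of Theorem 4.5, one checks that the difference of two $\boldsymbol{\nu}$-parabolic connections with trace $\nabla_L$ is a global section of $\mathcal{G}$, and that adding a section of $\mathcal{G}$ to such a connection yields another one; hence, if such connections exist at all, they form a torsor over $H^0(C,\mathcal{G})$.

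Next I would establish local existence and package the obstruction in \v{C}ech cohomology. Choose an open cover $\{U_\mu\}$ of $C$ with each $U_\mu$ containing at most one $t_i$. On each $U_\mu$ a $\boldsymbol{\nu}$-parabolic connection $\nabla_\mu$ with $\tr\nabla_\mu=\nabla_L|_{U_\mu}$ is produced by elementary linear algebra: take a local frame adapted to the flag, prescribe the diagonal residue matrix $\mathrm{diag}(\nu^+_i,\nu^-_i)$ at $t_i$, and correct by a trace-free holomorphic term so that the trace equals $\nabla_L$. Then $\Phi_{\mu\nu}:=\nabla_\mu-\nabla_\nu$ is a $1$-cocycle with values in $\mathcal{G}$, and a global $\boldsymbol{\nu}$-parabolic connection with trace $\nabla_L$ exists if and only if its class in $H^1(C,\mathcal{G})$ vanishes (the usual Atiyah-type obstruction). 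So it suffices to prove $H^1(C,\mathcal{G})=0$.

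For this, consider the exact sequence
\[
0\longrightarrow\End^0(E)\otimes\Omega_C^1\longrightarrow\mathcal{G}\longrightarrow\bigoplus_{i=1}^n\Hom(l^{(i)}_1,E|_{t_i}/l^{(i)}_1)\longrightarrow 0,
\]
where $\End^0(E)$ denotes the sheaf of trace-free endomorphisms and the skyscraper quotient ($\cong\mathbb{C}$ at each $t_i$) records the line of admissible residues. Since $H^1$ of the skyscraper vanishes, it is enough to show that the connecting map $\delta\colon\mathbb{C}^n\to H^1(C,\End^0(E)\otimes\Omega_C^1)$ is surjective. By Serre duality together with the self-duality of $\End^0(E)$ under the trace pairing, $H^1(C,\End^0(E)\otimes\Omega_C^1)\cong H^0(C,\End^0(E))^*$, and a residue computation shows that, under this identification, $\delta$ is the transpose of the linear map $H^0(C,\End^0(E))\to\bigoplus_{i=1}^n\Hom(l^{(i)}_1,E|_{t_i}/l^{(i)}_1)$ sending $\phi$ to the collection of induced maps $l^{(i)}_1\to E|_{t_i}/l^{(i)}_1$. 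The kernel of that map is exactly $H^0$ of the trace-free parabolic endomorphisms of $(E,l_*)$, which vanishes because an $\boldsymbol{\alpha}$-stable parabolic bundle is simple. Hence $\delta$ is surjective, $H^1(C,\mathcal{G})=0$, and the desired parabolic connection exists.

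The one delicate point is the last paragraph: correctly identifying the quotient sheaf, the connecting homomorphism, and its transpose after Serre duality. Everything else is either formal obstruction theory or elementary linear algebra, and the geometric input — vanishing of trace-free parabolic endomorphisms — is immediate from $\boldsymbol{\alpha}$-stability. As a sanity check, the same exact sequence yields $h^0(C,\mathcal{G})=3g+n-3=\dim P^{\boldsymbol{\alpha}}(L)$, which is the expected dimension of the fibres of $\Bun$.
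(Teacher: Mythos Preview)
Your argument is correct and follows essentially the same strategy as the paper: build local $\boldsymbol{\nu}$-parabolic connections with trace $\nabla_L$, observe that the gluing obstruction is a class in $H^1(C,\mathcal{G})$ (the paper calls this sheaf $\mathcal{E}^1$), and kill it using simpleness of the stable parabolic bundle. The only difference is in how the vanishing $H^1(C,\mathcal{G})=0$ is obtained. The paper does it in one line: the trace pairing gives a perfect pairing $\mathcal{E}^0\times\mathcal{E}^1\to\Omega_C^1$, where $\mathcal{E}^0$ is the sheaf of trace-free parabolic endomorphisms of $(E,l_*)$, so $\mathcal{E}^1\simeq(\mathcal{E}^0)^\vee\otimes\Omega_C^1$ and Serre duality yields $H^1(C,\mathcal{E}^1)\simeq H^0(C,\mathcal{E}^0)^*=0$. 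Your exact-sequence route is a longer walk to the same destination, since the injectivity of the transpose of your connecting map $\delta$ is precisely the statement $H^0(C,\mathcal{E}^0)=0$. One small slip: the skyscraper quotient of $\mathcal{G}$ by $\End^0(E)\otimes\Omega_C^1$ records the nilpotent residue, which lies in $\Hom(E|_{t_i}/l^{(i)}_1,\,l^{(i)}_1)$, not in $\Hom(l^{(i)}_1,\,E|_{t_i}/l^{(i)}_1)$; since you dualize anyway when describing the transpose of $\delta$, this does not affect the conclusion.
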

	\begin{proof}
		Let $\{U_i\}_i$ be an open covering of $C$ and $\nabla'_i$ be a logarithmic connection on $U_i$ satisfying $(\textrm{res}_{t_k}(\nabla'_i)-\nu^+_k\textrm{id})(l^{(k)}_1)=0, (\textrm{res}_{t_k}(\nabla'_i)-\nu^-_k\textrm{id})(E|_{t_i}) \subset l^{(k)}_1$
		at each $t_k \in U_i$ and $\tr \nabla'_i=\nabla_L|_{U_i}$. We define sheaves $\mathcal{E}_0$ and $\mathcal{E}_1$ on $C$ by 
		\begin{align*}
			\mathcal{E}^0
			&:=\{s \in \mathcal{E}nd(E) \mid \text{$\tr (s)=0$ and $s_{t_i}(l^{(i)}_1) \subset l^{(i)}_1$ for any $i$}\}, \\
			\mathcal{E}^1
			&:=\{s \in \mathcal{E}nd(E) \otimes \Omega_C^1(D) \mid \text{$\tr (s)=0$ and $\textrm{res}_{t_i}(s)(l^{(i)}_j)\subset l^{(i)}_{j+1}$ for any $i, j$}\}.
		\end{align*}
		Then the isomorphism $\mathcal{E}^1 \simeq (\mathcal{E}^0)^\vee \otimes \Omega_C^1$ holds.
		Differences $\nabla'_i-\nabla'_j$ define the cocycle
		\[
		(\nabla'_i-\nabla'_j)_{i,j} \in H^1(C,\mathcal{E}^1).
		\]
		By Serre duality and the simplicity of $E$, we obtain
		\[
		H^1(C,\mathcal{E}^1) \simeq H^0(C, \mathcal{E}^0)^* = \{0\}.
		\]
		Hence, there exists $\Phi_i \in \mathcal{E}^1(U_i)$ for each $i$ such that $\nabla'_i-\nabla'_j=\Phi_i-\Phi_j$. Set $\nabla_i=\nabla'_i-\Phi_i$. 
		Then $(\nabla_i)_i$ defines a $\boldsymbol{\nu}$-parabolic connection $\nabla$ over $(E, l_*)$ satisfying $\tr \nabla \simeq \nabla_L$.
	\end{proof}
	For a quasi-parabolic bundle $(E, l_*) \in V_0$, let us fix a $\boldsymbol{\nu}$-parabolic connection $(E,\nabla, l_*) \in \Bun^{-1}((E, l_*))$.  Let $(E,\nabla', l_*) \in \Bun^{-1}((E, l_*))$ be another $\boldsymbol{\nu}$-parabolic connection. Then 
	$\nabla'-\nabla$ is a global section of $\mathcal{E}^1$ which is the sheaf defined in the proof of Lemma \ref{lemexconne}. Therefore, we have the isomorphism $\Bun^{-1}((E, l_*)) \simeq \nabla+H^0(C,\mathcal{E}^1)$. 
	
	For a section $\Theta \in H^0(\mathcal{E}nd(E) \otimes \Omega_C^1(D))$, we define the section $\varphi_\Theta \in H^0(C, L \otimes \Omega_C^1(D))$ by the composite
	\[
	\mathcal{O}_C  \hookrightarrow E \xrightarrow{\Theta} E\otimes \Omega_C^1(D) \rightarrow E/\mathcal{O}_C \otimes \Omega_C^1(D) \simeq L \otimes \Omega_C^1(D)
	\]
	and define the map
	\[
	\varphi \colon H^0(C,\mathcal{E}nd(E) \otimes \Omega_C^1(D)) \longrightarrow H^0(C, L \otimes \Omega^1(D))  
	\]
	by $\varphi(\Theta)=\varphi_\Theta$. It is clearly linear. Let us define the sheaf $\mathcal{F}^1$ by
	\[
	\mathcal{F}^1
	=\{s \in \mathcal{E}nd(E) \otimes \Omega_C^1(D) \mid \text{$\textrm{res}_{t_i}(s)(l^{(i)}_j)\subset l^{(i)}_{j+1}$ for all $i, j$}\}.
	\]
	Assume that $\Theta \in H^0(C,\mathcal{F}^1)$ satisfies $\varphi_\Theta=0$, that is, $\Theta(\mathcal{O}_C) \subset \mathcal{O}_C \otimes \Omega_C^1(D)$. By definitions of $V_0$ and $\mathcal{E}^1$, we obtain 
	$\textrm{res}_{t_i}(\Theta)(\mathcal{O}_C|_{t_i}) \subset \mathcal{O}_C|_{t_i} \cap l^{(i)}_1=\{0\}$ for any $i$. Hence, we have $\Theta(\mathcal{O}_C) \subset \mathcal{O}_C \otimes \Omega_C$, that is, $\Theta|_{\mathcal{O}_C}$ is a global section of $\Omega_C^1$.
	\begin{lemma}\label{lemtheta}
		The linear map $H^0(C,\mathcal{F}^1)\cap\Ker \varphi \rightarrow H^0(C,\Omega_C^1),\ \Theta \mapsto \Theta|_{\mathcal{O}_C}$ is an isomorphism.
	\end{lemma}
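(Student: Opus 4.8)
The plan is to give the map an honest two-sided inverse, splitting the question into a trivial surjectivity statement and a cohomological injectivity statement that is forced by Lemma 4.6. For surjectivity there is nothing to do: given $\omega\in H^0(C,\Omega_C^1)$, the twisted scalar endomorphism $\omega\cdot\id$ lies in $H^0(C,\mathcal{E}nd(E)\otimes\Omega_C^1(D))$, its residue at every $t_i$ vanishes so it belongs to $H^0(C,\mathcal{F}^1)$, it lies in $\Ker\varphi$ because the composite $\mathcal{O}_C\hookrightarrow E\to E/\mathcal{O}_C\simeq L$ is zero, and $(\omega\cdot\id)|_{\mathcal{O}_C}=\omega$. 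Hence the map $\Theta\mapsto\Theta|_{\mathcal{O}_C}$ is surjective, and everything reduces to proving it is injective.

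For injectivity I would take $\Theta\in H^0(C,\mathcal{F}^1)\cap\Ker\varphi$ with $\Theta|_{\mathcal{O}_C}=0$. As observed just before the statement, $\varphi_\Theta=0$ already puts $\Theta|_{\mathcal{O}_C}$ into $H^0(C,\Omega_C^1)$, so its vanishing means $\Theta(\mathcal{O}_C)=0$ and $\Theta$ factors through the quotient $E\twoheadrightarrow L$. The first real step is to see that $\Theta$ is regular: at each $t_i$ the residue $\res_{t_i}(\Theta)\in\End(E|_{t_i})$ annihilates $\mathcal{O}_C|_{t_i}$ (since $\Theta(\mathcal{O}_C)=0$) and annihilates $l^{(i)}_1$ (since $\res_{t_i}(\Theta)(l^{(i)}_1)\subset l^{(i)}_2=\{0\}$ by definition of $\mathcal{F}^1$), and by condition (iii) of $V_0$ the two lines $\mathcal{O}_C|_{t_i}$ and $l^{(i)}_1$ span $E|_{t_i}$, whence $\res_{t_i}(\Theta)=0$ for all $i$. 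Therefore $\Theta$ determines a morphism $\bar\Theta\in\Hom(L,E\otimes\Omega_C^1)=H^0(C,E\otimes L^{-1}\otimes\Omega_C^1)$, with $\Theta=0$ if and only if $\bar\Theta=0$.

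It then remains to show $H^0(C,E\otimes L^{-1}\otimes\Omega_C^1)=0$. Tensoring the defining sequence $0\to\mathcal{O}_C\to E\to L\to 0$ by $L^{-1}\otimes\Omega_C^1$ gives
\[
0\longrightarrow L^{-1}\otimes\Omega_C^1\longrightarrow E\otimes L^{-1}\otimes\Omega_C^1\longrightarrow\Omega_C^1\longrightarrow 0,
\]
whose extension class in $\textrm{Ext}^1(\Omega_C^1,L^{-1}\otimes\Omega_C^1)\simeq H^1(C,L^{-1})$ is the class $b$ of the original extension. Since $\deg(L^{-1}\otimes\Omega_C^1)=-1$ we have $H^0(C,L^{-1}\otimes\Omega_C^1)=0$, so in the associated long exact sequence $H^0(C,E\otimes L^{-1}\otimes\Omega_C^1)$ is the kernel of the connecting map $\delta\colon H^0(C,\Omega_C^1)\to H^1(C,L^{-1}\otimes\Omega_C^1)$; and $\delta$ is cup product with $b$, which is exactly the isomorphism $\langle\ ,b\rangle'$ of Lemma 4.6. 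Hence $\delta$ is injective, $H^0(C,E\otimes L^{-1}\otimes\Omega_C^1)=0$, and so $\bar\Theta=0$ and $\Theta=0$, completing injectivity. The main obstacle, I expect, is not any of these steps individually but the careful identification of the connecting homomorphism $\delta$ with the cup-product map of Lemma 4.6, carried through the canonical isomorphisms $\mathcal{H}om(L,E)\simeq E\otimes L^{-1}$ and $\textrm{Ext}^1(\Omega_C^1,L^{-1}\otimes\Omega_C^1)\simeq\textrm{Ext}^1(\mathcal{O}_C,L^{-1})\simeq H^1(C,L^{-1})$, together with the local residue bookkeeping behind the regularity of $\Theta$.
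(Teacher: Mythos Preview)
Your argument is correct and tracks the paper's proof almost exactly: surjectivity via $\omega\cdot\id_E$, then for injectivity the observation that $\Theta|_{\mathcal{O}_C}=0$ together with the $\mathcal{F}^1$-condition and condition (iii) of $V_0$ forces $\res_{t_i}(\Theta)=0$, so $\Theta$ factors as a map $L\to E\otimes\Omega_C^1$. The one genuine divergence is in the final vanishing $\Hom(L,E\otimes\Omega_C^1)=0$. The paper dispatches this in one line by Serre duality: since $E$ has rank $2$ one has $E^\vee\simeq E\otimes L^{-1}$, hence
\[
\Hom(L,E\otimes\Omega_C^1)\simeq H^0(C,E^\vee\otimes\Omega_C^1)\simeq H^1(C,E)^*,
\]
and $H^1(C,E)=0$ is immediate from condition (ii) of $V_0$ and Riemann--Roch. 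Your route through the long exact sequence and Lemma~4.6 is also valid, but it is a detour: Lemma~4.6 is itself deduced (via Proposition~4.3) from the same fact $\dim H^0(C,E)=1\Leftrightarrow H^1(C,E)=0$, so you are invoking that vanishing indirectly rather than directly. The paper's version is shorter and avoids the need to identify the connecting homomorphism with the cup product, which you yourself flag as the most delicate point of your argument.
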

	\begin{proof}
		For $\mu \in H^0(C,\Omega_C^1)$, we define $\Theta=\id_E\otimes \mu$.  Then we have $\Theta \in H^0(C,\mathcal{F}^1)\cap\Ker \varphi$ and $\Theta|_{\mathcal{O}_C}=\mu$. The linear map is hence surjective. We show that the map is injective. If $\Theta \in H^0(C,\mathcal{F}^1)\cap\Ker \varphi$ satisfies $\Theta|_{\mathcal{O}_C}=0$, then $\Theta$ induces the homomorphism $\hat{\Theta} \colon L \simeq E/\mathcal{O}_C \rightarrow E \otimes \Omega_C^1(D)$. 
		$\textrm{res}_{t_i}(\Theta)=0$ implies $\textrm{res}_{t_i}(\hat{\Theta})=0$, so we obtain $\hat{\Theta}(L) \subset E \otimes \Omega_C^1$.
		Since $\rank E=2$, we have isomorphisms $E^\vee \simeq E \otimes (\det E)^{-1} \simeq E \otimes L^{-1}$. By this isomorphism and Serre duality, 
		\[
		\Hom(L,E \otimes \Omega_C^1) \simeq H^0(C, L^{-1} \otimes E \otimes \Omega_C^1)\simeq  H^0(C,E^\vee \otimes \Omega_C^1) \simeq H^1(C,E)^*=\{0\}
		\]
		Hence we obtain $\hat{\Theta}=0$ and this implies $\Theta=0$. 
	\end{proof}
	\begin{proof}(Another proof the second assertion of Theorem \ref{MT})
		
		We show that for each $(E, l_*) \in V_0$, the morphism
		\[
		\App \colon \Bun^{-1}((E, l_*)) \longrightarrow \mathbb{P} H^0(C, L\otimes \Omega_C^1(D))
		\]
		is injective.
		
		Let us fix a $\boldsymbol{\nu}$-parabolic connection $(E,\nabla, l_*) \in \Bun^{-1}((E, l_*))$. If there exists $\Theta \in H^0(C,\mathcal{E}^1)$ such that 
		$\varphi_\nabla=\varphi_\Theta$, then $\nabla-\Theta$ is a $\boldsymbol{\nu}$-parabolic connection and $\varphi_{\nabla-\Theta}=0$. It is a contradiction. Thus, we have 
		\[
		\{\varphi_\Theta \mid \Theta \in H^0(C,\mathcal{E}^1)\} \cap \mathbb{C}\varphi_\nabla =\{0\}.
		\]
		Hence, we only need to show that the linear map $\varphi \colon H^0(C,\mathcal{E}^1) \rightarrow H^0(C, L \otimes \Omega^1(D))$ is injective. Suppose that a section $\Theta \in H^0(C,\mathcal{E}^1)$ satisfies $\varphi_\Theta=0$. By the proof of Lemma \ref{lemtheta},
		there is a section $\mu \in H^0(C,\Omega_C^1)$ such that $\Theta = \id_E \otimes \mu$. Since $\tr\, \Theta =0$, we get $\mu=0$ and this means $\Theta=0$.
	\end{proof}
	\begin{remark}\label{rationality}
	 Theorem \ref{MT} provides a proof of the rationality of the moduli space. On the other hand, the rationality of moduli space $M^{\boldsymbol{\alpha}}(\boldsymbol{\nu},(L,\nabla_L))$ of $\boldsymbol{\alpha}$-stable $\boldsymbol{\nu}$-parabolic connections with the determinant $(L,\nabla_L)$ over a curve $C$ with genus $g\geq 2$ for any rank $r$ can be proven without using $\App\times \Bun$ as follows. (This was suggested by the anonymous referee.) Let $P^{\boldsymbol{\alpha}}(r, L)$ be the moduli space of $\boldsymbol{\alpha}$-stable parabolic bundles of rank $r$ over $(C,\bm{t})$ with the determinant $L$ and $\eta$ be the scheme theoretic generic point of $P^{\boldsymbol{\alpha}}(r, L)$, which corresponds to a quasi-parabolic bundle $(E_\eta,l_{\eta,*})$ on $C_\eta:=C\times \textrm{Spec}\, k(\eta)$. Since for any $\boldsymbol{\alpha}$-stable parabolic bundles a traceless endomorphism compatible with the parabolic structure is only zero, we can construct a parabolic $\boldsymbol{\nu}$-connection on $(E_\eta,l_{\eta,*})$ with the determinant $(L,\nabla_L)_\eta$ in the same manner as the proof of Lemma \ref{lemexconne}. This means that there is a Zariski-open subset $U\subset P^{\boldsymbol{\alpha}}(r, L)$ and a flat family $(\tilde{E},\tilde{\nabla}, \tilde{l}_*)$ of $\boldsymbol{\nu}$-parabolic connections on $C\times U$ over $U$ such that $(\tilde{E},\tilde{l}_*)$ coincides with the restriction of the universal family on $C\times P^{\boldsymbol{\alpha}}(r, L)$. So we get a morphism
	 \[
	 T^*U\ni \Phi \longmapsto \tilde{\nabla}+\Phi \in M^{\boldsymbol{\alpha}}(\boldsymbol{\nu},(L,\nabla_L)),
	 \]
	 which is an isomorphism onto the Zariski-open subset of $M^{\boldsymbol{\alpha}}(\boldsymbol{\nu},(L,\nabla_L))$ consisting of parabolic connections whose underlying parabolic bundles belong to $U$. Since $P^{\boldsymbol{\alpha}}(r, L)$ is rational by \cite{BY}, $M^{\boldsymbol{\alpha}}(\boldsymbol{\nu},(L,\nabla_L))$ is also rational.
	\end{remark}
	\begin{subsection}{Lagrangian fibrations}
		Recall the canonical symplectic structure on $M^{\boldsymbol{\alpha}}(\boldsymbol{\nu},(L,\nabla_L))$ (see section 6 in \cite{IIS1} and section 7 in \cite{In} for more detail). Take a point $x=(E,\nabla, l_*)\in M^{\boldsymbol{\alpha}}(\boldsymbol{\nu},(L,\nabla_L))$. Let $\mathcal{E}^\bullet$ be the complex of sheaves defined by 
		\[
		\mathcal{E}^0\longrightarrow\mathcal{E}^1, \; s\longmapsto \nabla\circ s-s\circ\nabla, 
		\]
		where $\mathcal{E}^0$ and $\mathcal{E}^1$ are sheaves defined in Lemma \ref{lemexconne}. Then there exists the canonical isomorphism between the tangent space $T_xM^{\boldsymbol{\alpha}}(\boldsymbol{\nu},(L,\nabla_L))$ and the hypercohomology group $\mathbf{H}^1(\mathcal{E}^\bullet)$. Take an open covering $\{U_i\}_i$ of $C$. In \v{C}ech cohomology an element of $\mathbf{H}^1(\mathcal{E}^\bullet)$ can be written by the form $\{(B_{ij}), (\Phi_i)\}$, where $(B_{ij})_{i,j} \in C^1(\mathcal{E}^0)$, $(\Phi_i)_i \in C^0(\mathcal{E}^1)$ and $(\nabla B_{ij}-B_{ij}\nabla)_{i,j}=(\Phi_j-\Phi_i)_{i,j}$ in $C^1(\mathcal{E}^1)$. The canonical symplectic form $\Omega$ on $M^{\boldsymbol{\alpha}}(\boldsymbol{\nu},(L,\nabla_L))$ is defined by
		\begin{align*}
			\Omega_x\colon \mathbf{H}^1(\mathcal{E}^\bullet)\otimes \mathbf{H}^1(\mathcal{E}^\bullet)&\longrightarrow \mathbf{H}^2(\mathcal{O}_C\overset{d}{\rightarrow}\Omega_C^1)\cong \mathbb{C}\\
			(\{(B_{ij}), (\Phi_i)\},\{(B'_{ij}), (\Phi'_i)\}) &\longmapsto (\{\tr(B_{ij}\circ B'_{jk})\}, -\{(\tr (B_{ij}\circ\Phi'_j)-\tr (\Phi_i\circ B'_{ij}))\})
		\end{align*}
		at each $x$. We can see that the homomorphisms $H^0(C,\mathcal{E}^1)\rightarrow \mathbf{H}^1(C,\mathcal{E}^\bullet)$ and $\mathbf{H}^1(C,\mathcal{E}^\bullet)\rightarrow H^1(C,\mathcal{E}^0)$ defined by $(\Phi_i)_i \mapsto \{0, (\Phi_i)_i\}$ and $\{(B_{ij})_{i,j}, (\Phi_i)_i\} \mapsto (B_{ij})_{i,j}$, respectively, give an exact sequence 
		\[
		H^0(C,\mathcal{E}^0)\longrightarrow H^0(C,\mathcal{E}^1)\longrightarrow \mathbf{H}^1(C,\mathcal{E}^\bullet)\longrightarrow H^1(C,\mathcal{E}^0)\longrightarrow H^1(C,\mathcal{E}^1).
		\]
		When $(E,\nabla, l_*)\in M^{\boldsymbol{\alpha}}(\boldsymbol{\nu},(L,\nabla_L))^0$, we have  $H^1(C,\mathcal{E}^1) \simeq H^0(C, \mathcal{E}^0)^* = \{0\}$. We note that each element in $H^1(C,\mathcal{E}^0)$ gives a deformation of $(E, l_*)$. 
		\begin{proposition}
			$\App\colon M^{\boldsymbol{\alpha}}(\boldsymbol{\nu},(L,\nabla_L))^0\rightarrow |L\otimes \Omega_C^1(D)|$ and $\Bun\colon M^{\boldsymbol{\alpha}}(\boldsymbol{\nu},(L,\nabla_L))^0\rightarrow V_0$ are Lagrangian fibrations.
		\end{proposition}
		\begin{proof}
			Take a point $x=(E,\nabla, l_*)\in M^{\boldsymbol{\alpha}}(\boldsymbol{\nu},(L,\nabla_L))^0$ and put $[\gamma]=\App (x)$ and $[b]=\Bun(x)$, where $\gamma=(\gamma_i)_i\in H^0(C, L\otimes \Omega_C^1(D))$ and $b=(b_{ij})_{i,j}\in H^1(C, L^{-1}(-D))$ are nonzero elements. Then a transition matrix $M_{ij}$ of $E$ and a connection matrix $A_i$ of $\nabla$ have the form
			\[
			M_{ij}=
			\begin{pmatrix}
				1&b_{ij}\\
				0&c_{ij}
			\end{pmatrix}, \;
			A_{i}=
			\begin{pmatrix}
				\alpha_i&\beta_i\\
				\gamma_i&\delta_i
			\end{pmatrix}, 
			\]
			respectively. The natural homomorphism 
			\[
			T_x\App^{-1}([\gamma])\oplus T_x\Bun^{-1}([b])\longrightarrow T_xM^{\boldsymbol{\alpha}}(\boldsymbol{\nu},(L,\nabla_L))\cong \mathbf{H}^1(C,\mathcal{E}^\bullet)
			\]
			is an isomorphism. Since any element in $T_x\Bun^{-1}([b])$ does not deform $(E, l_*)$, we have $T_x\Bun^{-1}([b])\subset H^0(C,\mathcal{E}^1)$. So $\Omega|_{\Bun^{-1}([b])}=0$ and $T_x\App^{-1}([\gamma])\rightarrow H^1(C,\mathcal{E}^0)$ is an isomorphism. Take $\{(B_{ij})_{ij}, (\Phi_i)_i\} \in \mathbf{H}^1(C,\mathcal{E}^\bullet)$. Since the homomorphism
			\[
			T_{[b]}\mathbb{P}H^1(C, L^{-1}(-D))\cong H^1(C, L^{-1}(-D))/[b]\rightarrow H^1(C,\mathcal{E}^0)\cong T_{(E, l_*)}P^{\boldsymbol{\alpha}}(L), \; (g_{ij})_{i,j}\mapsto
			\left(
			\begin{pmatrix}
				0&g_{ij}\\
				0&0
			\end{pmatrix}\right)_{i,j}
			\]
			is an isomorphism, 
			$B_{ij}$ and $\Phi_i$ can be written by the form 
			\[
			B_{ij}=
			\begin{pmatrix}
				0&g_{ij}\\
				0&0
			\end{pmatrix},\;
			\Phi_i=
			\begin{pmatrix}
				\zeta_i&\eta_i\\
				\theta_i&-\zeta_i
			\end{pmatrix}, 
			\]
		where $\zeta_i, \eta_i, \in \Omega_C^1(U_i)$ and $\theta_i \in \Omega_C^1(D)(U_i)$. We note that $(b_{ij}\gamma_j)_{i,j}$ is a nonzero cocycle in $H^1(C,\Omega_{C}^1)$ (see Step 2 in the proof of Theorem \ref{MT}). So we have $H^1(C, L^{-1}(-D))=[b]\oplus \Ker \langle\gamma,\;\rangle$, where $\langle\; , \; \rangle$ is the natural pairing
		\[
		\langle\; , \; \rangle\colon H^0(C, L\otimes \Omega_C^1(D))\times H^1(C, L^{-1}(-D))\longrightarrow H^1(C,\Omega_C^1).
		\]
		Since $b=0$ in $H^1(C,\mathcal{E}^0)$, the composite 
		\[
		\Ker \langle\gamma,\;\rangle \rightarrow H^1(C, L^{-1}(-D))\rightarrow H^1(C,\mathcal{E}^0)
		\]
		becomes an isomorphism. So we may assume that 
		$(g_{ij})_{i,j} \in \Ker \langle\gamma, \;\rangle$. The condition  $\nabla B_{ij}-B_{ij}\nabla=dB_{ij}+A_iB_{ij}-B_{ij}A_j=M_{ij}\Phi_j-\Phi_iM_{ij}$ is equivalent to
		\begin{equation*}
			\left\{
			\begin{array}{ll}
				-g_{ij}\gamma_j=\zeta_j-\zeta_i+b_{ij}\theta_j \\
				dg_{ij}+\alpha_ig_{ij}-g_{ij}\delta_j=\eta_j-\eta_ic_{ij}-b_{ij}(\zeta_i+\zeta_j)\\
				c_{ij}\theta_j-\theta_i=0.
			\end{array}
			\right.
		\end{equation*}
		So $\theta=(\theta_i)_i$ defines a global section of $L\otimes \Omega_{C}^1(D)$ and $(b_{ij}\theta_j)_{i,j}$ is zero in $H^1(C, \Omega_C^1)$. Assume that $\{(B_{ij})_{ij}, (\Phi_i)_i\}\in  T_x\App^{-1}([\gamma])$. Then $\theta$ is an element of $[\gamma]$, and so $\theta$ must be zero. 
		Hence we have 
		\[
		\Omega_x(\{(B_{ij})_{ij}, (\Phi_i)_i\}, \{(B'_{ij})_{ij}, (\Phi'_i)_i\} )=0
		\]
		for any $\{(B_{ij})_{ij}, (\Phi_i)_i\}, \{(B'_{ij})_{ij}, (\Phi'_i)_i\}\in T_x\App^{-1}([\gamma])$, which means that $\Omega|_{\App^{-1}([\gamma])}=0$.
		\end{proof}
	\end{subsection}
	\section*{Acknowledgments}
	The author would like to thank Arata Komyo, Masa-Hiko Saito, and Kota Yoshioka for useful comments and valuable discussions. He also thanks the anonymous referee for improving this paper and suggesting a proof of the rationality of the moduli space of parabolic connections for any rank. He is supported by Japan Society for the Promotion of Science KAKENHI Grant Numbers 22J10695.
	
\end{document}